\newif\ifstartedinmathmode
\newcommand\encircled[1]{%
  \relax\ifmmode\startedinmathmodetrue\else\startedinmathmodefalse\fi%
  \tikz[baseline,anchor=base]{%
  \node[draw=red,circle,outer sep=0pt,inner sep=.2ex]
    {\ifstartedinmathmode$#1$\else#1\fi};}%
}
\theoremstyle{plain}
\newtheorem{theorem}{Theorem}[section]
\newtheorem{lemma}[theorem]{Lemma}
\newtheorem{corollary}[theorem]{Corollary}
\newtheorem{proposition}[theorem]{Proposition}
\theoremstyle{definition}
\newtheorem{remark}[theorem]{Remark}
\newtheorem{example}[theorem]{Example}
\newtheorem{question}[theorem]{Question}
\numberwithin{equation}{section}
\newcommand{\BC}{{\mathbb C}}
\newcommand{\BF}{{\mathbb F}}
\newcommand{\BH}{{\mathbb H}}
\newcommand{\BL}{{\mathbb L}}
\newcommand{\BR}{{\mathbb R}}
\newcommand{\cE}{{\mathcal E}}
\newcommand{\cH}{{\mathcal H}}
\newcommand{\cL}{{\mathcal L}}
\newcommand{\cS}{{\mathcal S}}
\newcommand{\fC}{{\mathfrak C}}
\newcommand{\fE}{{\mathfrak E}}
\newcommand{\fS}{{\mathfrak S}}
\newcommand{\fY}{{\mathfrak Y}}
\newcommand{\wtilL}{\widetilde{L}}
\newcommand{\whatA}{\widehat{A}}
\newcommand{\al}{\alpha}
\newcommand{\be}{\beta}
\newcommand{\de}{\delta}
\newcommand{\la}{\lambda}
\newcommand{\rank}{\textup{rank\,}}
\newcommand{\im}{\textup{Im}\,}
\newcommand{\kr}{\textup{Ker\,}}
\newcommand{\diag}{\textup{diag}}
\newcommand{\mat}[1]{\begin{bmatrix} #1 \end{bmatrix}}
\newcommand{\sbm}[1]{\left[\begin{smallmatrix} #1\end{smallmatrix}\right]}
\newcommand{\ov}[1]{{\overline{#1}}}
\newcommand{\tu}[1]{\textup{#1}}
\newcommand{\wtil}[1]{{\widetilde{#1}}}
\newcommand{\what}[1]{{\widehat{#1}}}
\newcommand{\ands}{\quad\mbox{and}\quad}
\newcommand{\trace}{\textup{trace\,}}
\newcommand{\BBone}{\mathbb{1}}
\newcommand{\OneVec}{\vec{\mathbf{1}}}
\newcommand{\vect}{\operatorname{vec}}
\begin{document}

\title[Matrix maps for which positivity and complete positivity coincide]{Linear matrix maps for which positivity and complete positivity coincide}

\author[S. ter Horst]{S. ter Horst}
\address{S. ter Horst, Department of Mathematics, Research Focus Area:\ Pure and Applied Analytics, North-West
University, Potchefstroom, 2531 South Africa and DSI-NRF Centre of Excellence in Mathematical and Statistical Sciences (CoE-MaSS)}
\email{Sanne.TerHorst@nwu.ac.za}

\author[A. Naud\'{e}]{A. Naud\'{e}}
\address{A. Naud\'{e}, Faculty of Engineering and the Built Environment, Academic Development Unit, University of the Witwatersrand, Johannesburg, 2000 South Africa and DSI-NRF Centre of Excellence in Mathematical and Statistical Sciences (CoE-MaSS)}
\email{naudealma@gmail.com}

\thanks{This work is based on the research supported in part by the National Research Foundation of South Africa (Grant Number 118513 and 127364).}

\subjclass[2010]{Primary 15A69; Secondary 15A23, 15B48}

%15B48 Positive matrices and their generalizations; cones of matrices
%15A23 Factorization of matrices
%15A69 Multilinear algebra, tensor products

%\date{\today}

\keywords{Positive maps, completely positive maps, Choi matrix, linear matrix maps}

\begin{abstract}
By the Choi matrix criteria it is easy to determine if a specific linear matrix map is completely positive, but to establish whether a linear matrix map is positive is much less straightforward. In this paper we consider classes of linear matrix maps, determined by structural conditions on an associated matrix, for which positivity and complete positivity coincide. The basis of our proofs lies in a representation of $*$-linear matrix maps going back to work of R.D. Hill which enables us to formulate a sufficient condition in terms of surjectivity of certain bilinear maps.
\end{abstract}

\maketitle

%%%%%%%%%%%%%%%%%%%%%%%%%%%%%%%%%%%%%%%%%%%%%%%%%%%%%%%%%%%%%%%%%%%%%%%%%%%%%%%%%%%%%%%
\section{Introduction}

Positive and completely positive matrix maps play a profound role in many fields of mathematics as well as in mathematical physics, cf., \cite{S13,SS05,L75,KMcCSZ19,P19Arx,P02,TT88,O91} and references given there. While the structure of the class of completely positive matrix maps is much better understood and it is easy to verify complete positivity via the Choi matrix criteria, the class of positive matrix maps is much less studied and more intricate, in part because of the existence of non-decomposable positive matrix maps, i.e., maps that are not the sum of a completely positive and a completely co-positive map \cite{M12,ZC13,TT88}. Moreover, recent work shows that there are many more positive maps than completely positive maps \cite{KMcCSZ19}, using the connection with multivariable polynomials that are positive but not sums-of-squares, while other recent work \cite{B20Arx} focuses on explicit construction of positive maps that are not completely positive. In the present paper we take a different approach and focus on classes of linear maps for which positivity and complete positivity coincide. As indicated, via the Choi matrix it is not difficult to determine whether a specific linear map is completely positive. However, we want to consider this question independent of specific maps, and determine classes of linear maps, determined by certain structural properties, where this always happens. That this phenomenon occurs for certain classes of matrix maps came out of our study of certain interpolation problems, on which we will report in a separate publication, where the solution criteria is to show that a certain matrix map determined by the interpolation data is positive, but it turns out that positivity in that case coincides with complete positivity, independently of the data, so that the Choi matrix criteria is not only sufficient for the existence of a solution, but also necessary.

Throughout this paper $\BF=\BC$ or $\BF=\BR$. To avoid confusion about transposes or adjoints, symmetric and Hermitian matrices, etc., we shall use notation  as if $\BF=\BC$. We consider a linear matrix map
\begin{equation}\label{cL-Intro}
\cL:\BF^{q\times q} \to \BF^{n\times n}.
\end{equation}
 With $\cL$ we associate two matrices, the Choi matrix $\BL$ given by
\begin{equation}\label{Choi}
\BL=\left[\BL_{ij}\right] \in \BF^{nq \times nq},\ \  \BL_{ij}=\cL\left(\mathcal{E}_{ij}^{(q)}\right) \in\BF^{n \times n}\ \ \mbox{for $i=1,\ldots,n,\, j=1,\ldots,q$},
\end{equation}
where $\mathcal{E}_{ij}^{(q)}$ is the standard basis element in $\BF^{q \times q}$ with a 1 on position $(i,j)$ and zeros elsewhere, and what we call the matricization of $\cL$, which is the matrix $L\in \BF^{n^2 \times q^2}$ determined by the linear map
\begin{equation}\label{Matricization}
L:\BF^{q^2} \to \BF^{n^2},\quad L\,\left(\vect_{q}(V)\right)= \vect_{n}\left(\cL (V)\right),\quad V\in\BF^{q\times q},
\end{equation}
where $\vect_{r \times s} :\BF^{r \times s} \to \BF^{rs}$ is the vectorization operator, abbreviated to $\vect_r$ in case $r=s$. Decompose $L$ as a block matrix
\begin{equation}\label{Lblock}
L=\left[L_{ij}\right]\quad \mbox{with}\quad L_{ij}\in\BF^{n \times q}\quad\mbox{for $i=1,\ldots,n,\, j=1,\ldots,q$}.
\end{equation}
Then $\BL$ and $L$ determine each other in the following way: The $((j-1)q + i)$-th column of $L$ is given by $\vect_n(\BL_{ij})$ and the $((j-1)n + i)$-th column in $\BL$ is given by $\vect_{n \times q}(L_{ij})$. This relation between $\BL$ and $L$ corresponds to a matrix reordering that appeared in \cite{PH81}, was further studied in \cite{OH85} and rediscovered recently in \cite{P19}; see Proposition 4.1 and Section 3 in \cite{tHN2} for further details.\label{LBLrel}

The well-known Choi matrix criteria tells us that $\cL$ is completely positive if and only if $\BL$ is positive semidefinite. Following \cite{KMcCSZ19}, we say that $\cL$ is $*$-linear if $\cL(V^*)=\cL(V)^*$ for all $V\in\BF^{q\times q}$ (for $\BF=\BC$ this corresponds with $\cL$ mapping Hermitian matrices to Hermitian matrices), and it turns out that $*$-linearity is equivalent to $\BL$ being Hermitian. In particular, all completely positive linear matrix maps are $*$-linear. Positivity of $\cL$ corresponds to
\begin{equation}\label{PosCon}
\left(z\otimes x\right)^* \BL \left(z\otimes x\right) \ge 0 \quad \mbox{for all}\quad x\in\BF^n \text{ and } z \in \BF^q.
\end{equation}
In this paper we determine structural properties of $\cL$, formulated in terms of the matricization $L$, under which positivity of $\cL$, that is, \eqref{PosCon}, is sufficient to conclude that $\BL$ is positive semidefinite, and hence $\cL$ completely positive, which are independent of the precise linear map $\cL$ (i.e., the entries in $L$).

Set $m=\rank \BL$. The relation between $\BL$ and $L$ explained above implies that $m$ corresponds to the maximum number of linearly independent matrices among the block entries of $L$, that is,
\[
m=\dim \tu{span}\{ L_{ij} \colon i=1,\ldots,n, \, j=1,...,q \}\subset\BF^{n\times q}.
\]
In particular, it is possible to select $m$ linearly independent block entries of $L$. Our main result states that if one can choose these linearly independent matrices to satisfy a certain condition and all other matrices are equal, then positivity and complete positivity of $\cL$ will coinicde.

\begin{theorem}\label{T:Main}
Let $\cL$ in \eqref{cL-Intro} be a $*$-linear map with matricization $L=\left[L_{ij}\right]$ and Choi matrix $\BL$. Set $m=\rank \BL$. Assume one can choose linearly independent $L_1,\ldots,L_m\in\BF^{n \times q}$ among the block entries of $L$, say $L_k=L_{i_kj_k}$ for $k=1,\ldots m$, in such a way that:
\begin{equation}\label{C1intro}
\mbox{(C1)\ \  For each $k$ we have $j_l\neq j_k$ for each $l\neq k$ or $i_l\neq i_k$ for each $l\neq k$.}
\end{equation}
Assume further that all other block entries of $L$ are equal to a single matrix $L_0$ in the span of $L_1,\ldots,L_m$. Then $\cL$ is completely positive if and only if $\cL$ is positive.
\end{theorem}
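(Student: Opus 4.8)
Only the implication ``$\cL$ positive $\Rightarrow$ $\cL$ completely positive'' requires an argument. The plan is to run it through the two tools the paper is built around. First, apply the Hill-type representation to the $*$-linear map $\cL$: splitting the Hermitian matrix $\BL$ along its positive and negative eigenspaces produces linearly independent matrices $W_1,\dots,W_m\in\BF^{n\times q}$ with $\operatorname{span}\{W_1,\dots,W_m\}=\operatorname{span}\{L_{ij}\}=:\cS$, together with signs $\vep_1,\dots,\vep_m\in\{+1,-1\}$, such that $\cL(V)=\sum_{s=1}^m\vep_sW_sVW_s^*$; moreover $\cL$ is completely positive precisely when $\BL\ge0$, i.e.\ when all $\vep_s=+1$. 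Via \eqref{PosCon}, positivity of $\cL$ is exactly the statement that $\sum_{s=1}^m\vep_s|x^*W_sz|^2\ge0$ for all $x\in\BF^n$, $z\in\BF^q$, i.e.\ $\beta(x,z)^*\diag(\vep_1,\dots,\vep_m)\beta(x,z)\ge0$ where $\beta(x,z)=(x^*W_1z,\dots,x^*W_mz)^T$. In particular, if $\beta$ maps onto $\BF^m$, then $v^*\diag(\vep_1,\dots,\vep_m)v\ge0$ for every $v\in\BF^m$, forcing all $\vep_s=+1$ and hence $\cL$ completely positive. Finally, surjectivity of $\beta$ is unaffected by replacing the basis $\{W_s\}$ of $\cS$ by the basis $L_k=L_{i_kj_k}$ supplied by the hypothesis, since the two maps differ by an invertible linear change of coordinates on $\BF^m$. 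So the whole theorem reduces to showing that $\beta'(x,z):=(x^*L_1z,\dots,x^*L_mz)^T$ is surjective.

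I would next make $\beta'$ explicit from the structural hypotheses. Transporting Hermitianity of $\BL$ through the correspondence between $\BL$ and $L$ recalled in the Introduction gives the symmetry $(L_{ij})_{rc}=\overline{(L_{rc})_{ij}}$ for all admissible indices; together with ``every block other than $L_1,\dots,L_m$ equals $L_0$'' this forces each $L_k$ to take a single constant value $\alpha_k\in\BF$ at every entry $(r,c)$ that is not one of the $m$ chosen positions $(i_1,j_1),\dots,(i_m,j_m)$, while $(L_k)_{i_lj_l}=M_{kl}$ for a fixed Hermitian matrix $M=[M_{kl}]$. Writing $\vec\rho(x,z)=(\overline{x_{i_1}}z_{j_1},\dots,\overline{x_{i_m}}z_{j_m})^T$ and $w(x,z)=\sum\overline{x_r}z_c$ over all non-chosen positions $(r,c)$, this yields
\[
\beta'(x,z)=w(x,z)\,\vec\alpha+M\vec\rho(x,z),\qquad\vec\alpha=(\alpha_1,\dots,\alpha_m)^T.
\]
A direct computation with linear combinations $\sum_kc_kL_k$ shows the kernel of $M$ is at most one-dimensional, being nonzero only when the ``all-ones-minus-chosen-positions'' matrix lies in $\cS$, and that in that exceptional case $\vec\alpha\notin\ran M$; hence $\ran M+\BF\vec\alpha=\BF^m$ in all cases, so every $\xi\in\BF^m$ can be written as $\xi=w^0\vec\alpha+M\vec\rho^0$ for suitable $w^0\in\BF$, $\vec\rho^0\in\BF^m$.

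The heart of the matter is the combinatorial claim that (C1) makes $(x,z)\mapsto\vec\rho(x,z)$ surjective onto $\BF^m$. To prove it, partition $\{1,\dots,m\}=A\sqcup B$ by putting $k\in A$ when $i_k\ne i_l$ for every $l\ne k$ and $k\in B$ otherwise, so that by (C1) then $j_k\ne j_l$ for every $l\ne k$; one checks the rows $\{i_k:k\in A\}$ are pairwise distinct and disjoint from all other chosen rows, and dually the columns $\{j_k:k\in B\}$ are pairwise distinct and disjoint from all other chosen columns. Given a target in $\BF^m$, take $z$ equal to $1$ on the columns owned by $A$ and to the prescribed values on the columns owned by $B$, take $x$ equal to $1$ on the rows owned by $B$ and to the prescribed conjugated values on the rows owned by $A$, and $0$ elsewhere; the disjointness prevents any clash of assignments, and each $\overline{x_{i_l}}z_{j_l}$ comes out as wanted. (The ``corner'' configuration $(i_k,j_k),(i_k,j_{k'}),(i_{k''},j_k)$ shows surjectivity fails when (C1) fails, though we do not need this direction.) Then I would pass from ``$\vec\rho$ surjective'' to ``$\beta'$ surjective'': realize $\vec\rho^0$ as above, and correct $w$ to $w^0$ without disturbing $\vec\rho$ — for instance by altering some $x_r$ with $r\notin\{i_1,\dots,i_m\}$, or some $z_c$ with $c\notin\{j_1,\dots,j_m\}$, which changes $w$ but not $\vec\rho$; when the chosen positions meet every row and every column one instead exploits the residual freedom already present inside the realization of $\vec\rho^0$. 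I expect this last ``$w$-tuning'' step to be the principal difficulty: handling $\BF=\BR$ and $\BF=\BC$ uniformly, and especially the degenerate configurations with no spare row or column, where one must argue directly that the attainable pairs $(w,\vec\rho)$ still cover $\BF^m$ under $(w,\vec\rho)\mapsto w\vec\alpha+M\vec\rho$.
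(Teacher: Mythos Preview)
Your overall architecture coincides with the paper's: reduce to surjectivity of the bilinear map $(x,z)\mapsto \whatA(z\otimes x)$ (in your coordinates, $\beta'$), prove surjectivity of the ``position part'' $\vec\rho$ via (C1), and then adjust the remaining scalar $w$. Your proof that $\vec\rho$ is onto is correct and essentially the paper's Lemma~\ref{L:fE1=Fm}. The decomposition $\beta'(x,z)=w(x,z)\vec\alpha+M\vec\rho(x,z)$ is the analogue, in the $L_k$-basis, of the paper's formula $\whatA(z\otimes x)=(I_m-\alpha\OneVec_m^T)E(z\otimes x)+\alpha\,\OneVec_{nq}^T(z\otimes x)$; note that $w=(x^*\OneVec_n)(\OneVec_q^Tz)-\OneVec_m^T\vec\rho$, so $w$ and $\vec\rho$ are genuinely coupled, not independent.

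The gap is precisely the step you flag as ``the principal difficulty''. It is not a matter of filling in details: the paper spends the bulk of Section~\ref{S:PtoCP} (Lemmas~\ref{L:fE2=Fm} through~\ref{L:Case (iii) noninv}) on exactly this $w$-tuning, splitting into seven structural subcases (C2.1)--(C2.7) plus three residual shapes (i)--(iii), and further according to whether $\sum_k\alpha_k=1$. More seriously, surjectivity of $\beta'$ is \emph{false} in one configuration: for $\BF=\BR$, when $L$ has the ``cross'' shape (iii) of Lemma~\ref{L:fE2=Fm} and the coefficients satisfy $4\wtil\alpha_1\wtil\alpha_2>1$, the paper computes $\fY_{\whatA}$ explicitly and shows it is a proper subset of $\BR^m$ (Lemma~\ref{L:Case (iii) inv}). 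In that case the conclusion is rescued not by surjectivity but by a limiting argument: every $y\notin\fY_{\whatA}$ is approximated by $y+\mu(e_1+e_m)\in\fY_{\whatA}$ as $\mu\to0$, which still forces $\BH\ge0$ (Lemma~\ref{L:Case (iii) inv2}). Your outline does not anticipate this obstruction, so as stated the plan cannot be completed without adding this extra mechanism.
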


This theorem will be proved in Section \ref{S:PtoCP}.

The additional condition that all other matrices must be equal to a single matrix $L_0$ is unfortunate and we do not know whether it can be removed. However, in a few specific examples where (C1) holds and more than a single matrix occurs among the remaining matrices we could still prove the result, while we have not been able to produce a counterexample. Specifically, when the independent matrices $L_1,\ldots,L_m$ can be chosen so that they are all in a single block column or in a single block row, then the result remains valid without assumptions on the remaining block matrices in $L$, see Proposition \ref{P:IndepsRowColumn} below.

\begin{question}\label{Q:OpenQ}
Does Theorem \ref{T:Main} remain true if the condition that the remaining block entries $L_{ij}$ for $(i,j)\neq (i_k,j_k)$, $k=1,\ldots,m$ should be equal to a single matrix $L_0$ is removed?
\end{question}

A specific subclass appears when one restricts to $L_0=0$, that is, when the non-zero block entries of $L$ form a linearly independent set. For this case the result is proved separately in Subsection \ref{SubS:L_0=0} as a stepping stone to the proof of Theorem \ref{T:Main}. In this case condition (C1) corresponds to a condition on the zero-pattern in the block matrix $L$. Theorem \ref{T:Main} only provides a necessary condition for positive maps $\cL$ to be completely positive. In the case where $L_0=0$ we give an example (in Example \ref{E:2x2upper} below) where the phenomenon still occurs, but (C1) does not hold. If matrices other than $L_0=0$ are allowed, this need not happen, at least for $\BF=\BR$, as follows from Example \ref{E:2x2Toeplitz} below.

The basis for our proof of Theorem \ref{T:Main} is a type of representation of linear matrix maps studied by R.D. Hill in \cite{H69,H73}, which we will refer to as Hill representations. These representations have the form
\begin{equation}\label{HillRepIntro}
\cL(V)=\sum_{k,l=1}^m \BH_{kl}\, A_k V A_l^*,\quad V\in\BF^{q \times q},
\end{equation}
for matrices $A_1,\ldots,A_m \in\BF^{n \times q}$. The matrix $\BH=\left[\BH_{kl}\right]_{k,l=1}^m\in\BF^{m\times m}$ is called the Hill matrix associated with the representation \eqref{HillRepIntro}. Moreover, we call the Hill representation \eqref{HillRepIntro} of $\cL$ minimal if $m$ is the smallest number of matrices $A_k$ that occurs in Hill representations for $\cL$, and it turns out that this smallest number equals $\rank \BL$. A detailed analysis of minimal Hill representations was conducted in \cite{tHN2} and a review of the relevant results from \cite{tHN2} will be given in Section \ref{S:Hill}.

If $\cL$ is given by the minimal Hill representation \eqref{HillRepIntro}, then the Choi matrix factors as
\[
\BL=\whatA^* \BH \whatA\quad \mbox{with } \widehat{A}^*:=\begin{bmatrix} \vect_{n \times q}\left(A_1\right) & \hdots & \vect_{n \times q}\left(A_m\right)  \end{bmatrix}\in \BF^{nq \times m}
\]
and $\whatA$ is a matrix with full row rank. As proved by Poluikis and Hill in \cite{PH81}, it follows that complete positivity of $\cL$ corresponds to positive definiteness of $\BH$. Using our criteria for positivity in \eqref{PosCon} and the above factorisation of $\BL$, it follows that a positive map $\cL$ is completely positive when the bilinear map determined by the matrix $\whatA$:
\begin{equation}\label{Bilinear}
(z,x)\mapsto \whatA (z\otimes x),\quad x\in\BF^n,\, z \in \BF^q
\end{equation}
is surjective. In fact, surjectivity of this bilinear map is independent of the choice of the Hill representation. To prove Theorem \ref{T:Main} we show that under the conditions in the theorem the bilinear map \eqref{Bilinear} is surjective independently of the choice of the linearly independent  matrices $L_1,\ldots,L_m$ and the matrix $L_0$, which works for $\BF=\BC$ and almost for $\BF=\BR$. In fact, for $\BF=\BR$ with $L$ as in Theorem \ref{T:Main}, the only case where the bilinear map \eqref{Bilinear} is not surjective is when $L$ is of the form
\[
L=\mat{
L_0&\cdots&L_0&L_q&L_0&\cdots&L_0\\
\vdots&&\vdots&\vdots&\vdots&&\vdots\\
L_0&\cdots&L_0&L_{q+r-2}&L_0&\cdots&L_0\\
L_1&\cdots&L_{s-1}&L_0&L_{s}&\cdots&L_{q-1}\\
L_0&\cdots&L_0&L_{q+r-1}&L_0&\cdots&L_0\\
\vdots&&\vdots&\vdots&\vdots&&\vdots\\
L_0&\cdots&L_0&L_{q+n-2}&L_0&\cdots&L_0
}
\]
with $L_0=\sum_{k=1}^{q+n-2}\al_k L_k$ such that $4\left(\sum_{k=1}^{q-1}\al_k\right)\left(\sum_{k=q}^{q+n-2}\al_k\right)>1$. In this case, however, positivity and complete positivity still coincides, which is proved by explicitly computing the range of the bilinear map \eqref{Bilinear}.

Little appears to be known about the ranges of bilinear maps, and we suspect that to resolve Question \ref{Q:OpenQ} a further study of surjectivity of bilinear maps might be required. However, we also point out that surjectivity of the associated bilinear maps \eqref{Bilinear} is not a necessary condition, as illustrated in Example \ref{E:2x2upper}.

Together with the current introduction, the paper consists of three sections. In Section 2 we recall some results on $*$-linear maps and minimal Hill representations from \cite{tHN2} that will be used throughout the paper. The proof of the main result, Theorem \ref{T:Main}, will be given in Section \ref{S:PtoCP} where we also prove that positivity and complete positivity coincide in a few additional cases and present various examples. Finally, in Section \ref{S:2x2} we illustrate our results by considering the case $n=q=2$.

We conclude this introduction with some words on notation and terminology and a few elementary formulas which can be found in most advanced linear algebra textbooks, cf., \cite{HJ85,HJ91,W16}. The standard $j$-th basis element in $\BF^n$ is denoted by $e_j^{(n)}$ or simply $e_j$ when the length is clear from the context. We write $\mathcal{E}_{ij}^{(n,m)}$ for the standard basis element of $\BF^{n \times m}$ with $1$ on position $(i,j)$ and zeros elsewhere, i.e., $\mathcal{E}_{ij}^{(n,m)}=e_i^{(n)}e_j^{(m)T}$, abbreviated to $\mathcal{E}_{ij}^{(n)}$ when $m=n$. With $\OneVec_n$ we indicate the all-1 vector of length $n$ and with $\BBone_{n \times m}$ the all-1 matrix of size $n \times m$, so that $\BBone_{n \times m}=\OneVec_{n}\OneVec_m^T$. Also here, we write $\BBone_n$ for $\BBone_{n \times n}$. Furthermore, $I_n$ denotes the $n \times n$ identity matrix and $P^{(n)}_{i,j}$ the permutation matrix of size $n \times n$ that interchanges the $i$-th and $j$-th row/ column, abbreviated to $P_{i,j}$ when there can be no confusion about the size.

For $A \in \BF^{n \times m}$ we write $A^T$ for its transpose, $A^*$ for its adjoint, $\overline{A}$ for its complex conjugate, $\kr{A}$ for its nullspace and $\im A$ for its range. We write $\cH_n$ for the $n \times n$ Hermitian matrices and $\cS_n$ for the $n \times n$ symmetric matrices. For $\BF=\BR$, of course, $\cH_n$ and $\cS_n$ coincide. With $A\geq 0$ (resp.\ $A>0$) we indicate that $A$ is positive semidefinite (resp.\ positive definite). Occasionally we will identify $\BF^n$ with $\BF^{n \times 1}$, so that matrix operations can be applied to vectors in $\BF^n$. The {\em Kronecker product} of matrices $A=\left[a_{ij}\right]\in\BF^{n \times m}$ and $B\in\BF^{k \times l}$ is defined as
\[
A\otimes B=\left[a_{ij}B\right]\in \BF^{(n k) \times (m l)},
\]
and the {\em Hadamard product} of matrices $A=\left[a_{ij}\right], B=\left[b_{ij}\right]\in\BF^{n \times m}$ is defined as
\[
A\circ B=\left[a_{ij}b_{ij}\right]\in\BF^{n \times m}.
\]
The \emph{vectorization} of a matrix $T \in \BF^{n \times m}$ is the vector $\vect_{n \times m }{(T)} \in \BF^{nm}$ defined as
\[
\vect_{n \times m}(T)= \sum_{j=1}^m\left(e_j^{(m)} \otimes I_n\right)Te_j^{(m)}.
\]
Note that the vectorization operator $\vect_{n \times m}$ defines an invertible linear map from $\BF^{n\times m}$ onto $\BF^{nm}$. If $m=n$ we just write $\vect_n$ and if the sizes are clear from the context, the indices are often left out. Moreover, we have
\begin{equation}\label{UnitVectId1}
\vect_{m \times n}\left(\cE^{(m,n)}_{l,k}\right)=
\vect_{m \times n}\left(e_l^{(m)}e_k^{(n)^T}\right)= e_k^{(n)} \otimes e_l^{(m)} =e_{(k-1)m +l}^{(nm)}.
\end{equation}
Finally, we define the {\em canonical shuffle} $\fC_n :\BF^{n^2} \to \BF^{n^2}$ which on pure tensors is defined as
\[
\fC_n (z \otimes x) = x \otimes z,
\]
and extended to $\BF^{n^2}$ by linearity.  Note that $\fC_n$ is a linear self-invertible map on $\BF^{n^2}$ which also satisfies $\fC_n^*=\fC_n$,  the matrix corresponding to $\fC$ is a signature matrix.

%%%%%%%%%%%%%%%%%%%%%%%%%%%%%%%%%%%%%%%%%%%%%%%%%%%%%%%%%%%%%%%%%%%%%%%%%%%%%%%%%%%%%%%
\section{Hill representations for $*$-linear matrix maps}\label{S:Hill}

Throughout this section $\cL$ is a linear matrix map of the form
\begin{equation}\label{cL}
\cL:\BF^{q\times q} \to \BF^{n\times n}.
\end{equation}
Recall that $\cL$ is called $*$-linear when $\cL(V^*)=\cL(V)^*$ for each $V\in \BF^{q\times q}$. In this section we review some results on $*$-linear matrix maps from \cite{tHN2} that will be used throughout the paper. In particular, we will discuss so-called minimal Hill representations that go back to the work of R.D. Hill in \cite{H69} and were further studied in \cite{H73,PH81,tHN2}. All results presented here can be found in \cite{tHN2}, or easily derived from results obtained there. We start with some general observations about $*$-linear matrix maps.

\subsection{$*$-Linear matrix maps}
Let $L$ and $\BL$ be the matricization and Choi matrix associated with the linear map $\cL$, as defined in \eqref{Matricization} and \eqref{Choi}, respectively. In that case $\cL$ can be expressed in terms of $L$ and $\BL$ via
\begin{equation}\label{Linv}
\cL(V)= \vect^{-1}\left(L\vect(V)\right),\quad V \in\BF^{q\times q}.
\end{equation}
and
\begin{equation}\label{BL to cL1}
\cL(V)=\sum_{i,j=1}^q  v_{ij}\BL_{ij},\quad V=[v_{ij}]\in\BF^{q\times q}.
\end{equation}
The latter can also be written as
\begin{equation}\label{BL to cL2}
\cL(V)=\left(\OneVec_q\otimes I_n\right)^T\left(\BL \circ \left(V \otimes \BBone_{n \times n}\right)\right)\left(\OneVec_q\otimes I_n\right).
\end{equation}
$*$-Linearity of $\cL$ can be expressed in terms of $L$ and $\BL$ as explained in the next result.

\begin{theorem}\label{T:*-linear}
Let $\cL$ be a linear map as in \eqref{cL}. Define $L\in\BF^{n^2 \times q^2}$ by \eqref{Matricization} and $\BL\in\BF^{nq \times nq}$ by \eqref{Choi}. Then the following are equivalent:
\begin{itemize}
  \item[(i)] $\cL$ is $*$-linear;
  \item[(ii)] $\BL\in\cH_{nq}$;
  \item[(iii)] $\ov{L}=\fC_n L \fC_q$.
\end{itemize}
Moreover, when $L$ is written as block matrix $L=\left[L_{ij}\right]$ with $L_{ij}=\left[\ell^{ij}_{kl}\right]\in\BF^{n \times q}$ then (iii) (and hence (i) and (ii)) is equivalent to
 \begin{equation}\label{HermChar2}
\ell_{kl}^{ij}=\ov{\ell}_{ij}^{kl},\quad 1 \le i,k \le n,\quad 1 \le j,l \le q.
\end{equation}
\end{theorem}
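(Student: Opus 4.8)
The plan is to establish the chain of equivalences (i)$\Leftrightarrow$(ii)$\Leftrightarrow$(iii) and then the further reformulation \eqref{HermChar2}, working entirely with the explicit formulas \eqref{Choi}, \eqref{Matricization} and the dictionary between $\BL$ and $L$ recorded on page~\pageref{LBLrel}. First I would prove (i)$\Leftrightarrow$(ii). Using \eqref{Choi}, the $(i,j)$ block of $\BL$ is $\BL_{ij}=\cL(\cE^{(q)}_{ij})$, and since $(\cE^{(q)}_{ij})^*=\cE^{(q)}_{ji}$ we get $\BL_{ij}^*=\cL(\cE^{(q)}_{ij})^*$ while $\BL_{ji}=\cL(\cE^{(q)}_{ji})$. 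Thus $\BL$ being Hermitian, i.e. $\BL_{ij}^*=\BL_{ji}$ for all $i,j$, is exactly the statement $\cL(\cE^{(q)}_{ij})^*=\cL((\cE^{(q)}_{ij})^*)$ for all $i,j$; since the $\cE^{(q)}_{ij}$ form a basis of $\BF^{q\times q}$ and both $V\mapsto \cL(V^*)$ and $V\mapsto \cL(V)^*$ are conjugate-linear, equality on the basis is equivalent to $*$-linearity of $\cL$. (In the real case conjugate-linear just means linear, and the argument is identical.)

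Next I would prove (ii)$\Leftrightarrow$(iii). Here the key input is the reordering relation between $\BL$ and $L$: the $((j-1)n+i)$-th column of $\BL$ is $\vect_{n\times q}(L_{ij})$, and dually the $((j-1)q+i)$-th column of $L$ is $\vect_n(\BL_{ij})$. Concretely, entry-by-entry this says $(\BL)_{(i-1)q+j,\,(k-1)q+l}$ and $(L)_{(k-1)n+i,\,(l-1)q+j}$ coincide (up to the precise index bookkeeping coming from \eqref{UnitVectId1}), i.e. the operation carrying $L$ to $\BL$ simply permutes matrix entries. One then checks that under this permutation the condition $\BL=\BL^*$ translates into $\ov{L}=\fC_n L\fC_q$: conjugation of $\BL$ corresponds to conjugation of $L$, and the transpose of $\BL$, combined with the index swap implementing the reordering, is implemented on the $L$ side by pre- and post-composing with the canonical shuffles $\fC_n$ and $\fC_q$ (which swap the two tensor legs on $\BF^{n^2}$ and $\BF^{q^2}$ respectively). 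The cleanest way to see this is to write $\vect_n(\BL_{ij})=(\text{column }(j-1)q+i\text{ of }L)$, apply $\vect_n^{-1}$, use $\vect_n^{-1}(w)^* = \vect_n^{-1}(\fC_n\ov w)$, and track indices; alternatively one can cite Proposition~4.1 and Section~3 of \cite{tHN2}, where exactly this correspondence is set up, so this step is essentially a bookkeeping verification rather than new content.

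Finally, for the last assertion I would unwind (iii) into coordinates. Writing $L=[L_{ij}]$ with $L_{ij}=[\ell^{ij}_{kl}]$, the map $\fC_q$ acting on the right permutes the block-column index $j$ with the within-block column index $l$, and $\fC_n$ on the left permutes the block-row index $i$ with the within-block row index $k$; so the $(k,l)$-entry of the $(i,j)$-block of $\fC_n L\fC_q$ is the $(i,j)$-entry of the $(k,l)$-block of $L$, namely $\ell^{kl}_{ij}$. Comparing with $\ov L$, whose $(k,l)$-entry in block $(i,j)$ is $\ov{\ell^{ij}_{kl}}$, gives precisely \eqref{HermChar2}: $\ell^{ij}_{kl}=\ov{\ell^{kl}_{ij}}$ for all admissible $i,k,l,j$.

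I expect the only real obstacle to be getting the index conventions in (ii)$\Leftrightarrow$(iii) exactly right — the double bookkeeping of block indices versus within-block indices, together with the ordering convention in $\vect$ (cf.\ \eqref{UnitVectId1}) and the definition of $\fC_n$ on pure tensors, is where sign/transpose errors creep in. Everything else is a short linear-algebra computation: (i)$\Leftrightarrow$(ii) is immediate from evaluating on the standard basis, and the passage from (iii) to \eqref{HermChar2} is just reading off coordinates. Since the $\BL$–$L$ correspondence and its properties are already established in \cite{tHN2}, I would lean on that reference for the permutation identity and keep the present proof to the verification that Hermitian symmetry of $\BL$ matches the stated identities for $L$.
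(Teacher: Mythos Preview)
Your proposal is correct, and there is essentially nothing to compare it to: the paper does not prove Theorem~\ref{T:*-linear} at all but simply records it as one of the results imported from \cite{tHN2} (see the opening of Section~\ref{S:Hill}, where it is stated that all results in that section can be found in \cite{tHN2}). Your outline --- (i)$\Leftrightarrow$(ii) by evaluating on the standard basis $\cE^{(q)}_{ij}$ and using conjugate-linearity, (ii)$\Leftrightarrow$(iii) via the entry-permutation that relates $\BL$ and $L$, and then reading off \eqref{HermChar2} in coordinates using the action of $\fC_n$ and $\fC_q$ on block/within-block indices --- is the natural direct verification and is sound; your own caveat about the index bookkeeping in the middle step is exactly the place where care is needed, and leaning on \cite{tHN2} for the precise reordering identity is appropriate.
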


One way of interpreting \eqref{HermChar2} is that ``structural properties of $L$ as a block matrix reoccur at the level of the blocks.'' See Subsection 6.1 in \cite{tHN2} for more details and examples. We mention here two implications, using the notation of Theorem \ref{T:*-linear}:
\begin{itemize}
\item[(i)] We have $L_{ij}=0$ if and only if $\ell_{ij}^{kl}=0$ for all $k$ and $l$.

\item[(ii)] We have $L_{ij}=L_{rs}$ for $(i,j)\neq (r,s)$ if and only if $\ell^{kl}_{ij}=\ell^{kl}_{rs}$ for all $k$ and $l$.

\end{itemize}
Hence zero-structure in the block matrix structure of $L$ corresponds to the same zero-structure in all blocks $L_{ij}$, and similarly for repeating blocks. Thus, diagonal, upper triangular, lower triangular, Toeplitz, Hankel, circulant, etc.\ structure at the block matrix level of $L$ reoccurs in the blocks.

Recall that $P_{i,j}$ denotes the permutation matrix that interchanges the $i$-th and $j$-th row/column and note that $P_{i,j}=P_{i,j}^*=P_{i,j}^T=P_{i,j}^{-1}$. In the next lemma we describe how permutations in $\cL$ translate to $L$ and $\BL$. The identities follow directly from \eqref{Linv} and \eqref{BL to cL1}; see also Lemma 3.6 and Corollary 3.7 in \cite{tHN2}.

\begin{lemma}\label{L:Perm}
Let $\cL$ in \eqref{cL} be linear and define $L$ as in \eqref{Matricization} and $\BL$ as in \eqref{Choi}. For $1\leq i_1,i_2,k_1,k_2 \leq n$, $1\leq j_1,j_2,l_1,l_2 \leq q$ define the linear map $\wtil{\cL}:\BF^{q \times q} \to \BF^{n \times n}$ via $\wtil{\cL}(V)= P_{k_1,k_2}\cL(P_{l_1,l_2} V P_{j_1,j_2})P_{i_1,i_2}$ for all $V\in\BF^{q \times q}$. Then the matricization $\wtilL$ and Choi matrix $\wtil{\BL}$ associated with $\wtil{\cL}$ are given by
\[
\wtilL= \left(P_{i_1,i_2} \otimes P_{k_1,k_2}\right)L \left(P_{j_1,j_2} \otimes P_{l_1,l_2}\right)\ \mbox{ and }\
\wtil{\BL} = \left(P_{i_1,i_2} \otimes P_{j_1,j_2}\right)\BL \left(P_{k_1,k_2} \otimes P_{l_1,l_2}\right).
\]
In particular, when $i_r=k_r$, $j_r=l_r$ for $r=1,2$, it follows that $\cL$ is $*$-linear (respectively positive or completely positive) if and only if $\wtil{\cL}$ is $*$-linear (respectively positive or completely positive).
\end{lemma}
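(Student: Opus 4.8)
The statement to prove is Lemma \ref{L:Perm}, which describes how conjugating $\cL$ by permutation matrices transforms the matricization $L$ and the Choi matrix $\BL$, and that $*$-linearity/positivity/complete positivity are preserved when the permutations on the left and right are matched.

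\smallskip

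\textbf{Plan of proof.} The plan is to compute $\wtilL$ and $\wtil\BL$ directly from their defining formulas \eqref{Matricization} and \eqref{Choi}, using only two standard Kronecker-product identities: first, $\vect_{n\times n}(AXB) = (B^T\otimes A)\vect(X)$ for matrices of compatible sizes, and second, that a permutation matrix $P_{i_1,i_2}$ satisfies $P_{i_1,i_2}^T = P_{i_1,i_2}$. For the matricization, I would start from $\wtilL\vect_q(V) = \vect_n(\wtil\cL(V)) = \vect_n\bigl(P_{k_1,k_2}\cL(P_{l_1,l_2}VP_{j_1,j_2})P_{i_1,i_2}\bigr)$. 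Applying the vectorization identity to the outer product gives $\bigl(P_{i_1,i_2}^T\otimes P_{k_1,k_2}\bigr)\vect_n\bigl(\cL(P_{l_1,l_2}VP_{j_1,j_2})\bigr) = \bigl(P_{i_1,i_2}\otimes P_{k_1,k_2}\bigr)L\,\vect_q(P_{l_1,l_2}VP_{j_1,j_2})$, and a second application of the identity rewrites $\vect_q(P_{l_1,l_2}VP_{j_1,j_2})$ as $\bigl(P_{j_1,j_2}^T\otimes P_{l_1,l_2}\bigr)\vect_q(V) = \bigl(P_{j_1,j_2}\otimes P_{l_1,l_2}\bigr)\vect_q(V)$. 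Since $V$ is arbitrary, equating the operators yields $\wtilL = \bigl(P_{i_1,i_2}\otimes P_{k_1,k_2}\bigr)L\bigl(P_{j_1,j_2}\otimes P_{l_1,l_2}\bigr)$, as claimed.

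\smallskip

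For the Choi matrix one proceeds blockwise. By definition $\wtil\BL_{ij} = \wtil\cL(\cE^{(q)}_{ij}) = P_{k_1,k_2}\cL\bigl(P_{l_1,l_2}\cE^{(q)}_{ij}P_{j_1,j_2}\bigr)P_{i_1,i_2}$. Now $P_{l_1,l_2}\cE^{(q)}_{ij}P_{j_1,j_2} = \cE^{(q)}_{\si(i)\tau(j)}$, where $\si$ is the transposition swapping $l_1,l_2$ (acting on the row index) and $\tau$ the transposition swapping $j_1,j_2$ (acting on the column index). Hence $\wtil\BL_{ij} = P_{k_1,k_2}\BL_{\si(i)\tau(j)}P_{i_1,i_2}$. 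One then checks that the block matrix with $(i,j)$-block equal to $\BL_{\si(i)\tau(j)}$ is exactly $\bigl(P_{i_1,i_2}\otimes I\bigr)\BL\bigl(P_{k_1,k_2}\otimes I\bigr)$—wait, more carefully: permuting the block rows of $\BL$ according to $\si$ and the block columns according to $\tau$ corresponds to left-multiplication by $P_{l_1,l_2}\otimes I_n$ and right-multiplication by $P_{j_1,j_2}\otimes I_n$; combining this with the within-block conjugation by $P_{k_1,k_2}$ on the left and $P_{i_1,i_2}$ on the right, and using the mixed-product rule $(A\otimes B)(C\otimes D) = AC\otimes BD$, assembles everything into $\wtil\BL = \bigl(P_{i_1,i_2}\otimes P_{j_1,j_2}\bigr)\BL\bigl(P_{k_1,k_2}\otimes P_{l_1,l_2}\bigr)$. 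I would double-check the index bookkeeping here—this is the one place where it is easy to mix up which permutation acts on block-level versus entry-level indices—but no new idea is involved. Alternatively one can simply cite the relation between $L$ and $\BL$ recorded on page \pageref{LBLrel} together with the formula for $\wtilL$ already obtained, since the reordering map intertwines the two Kronecker structures.

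\smallskip

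\textbf{Final assertion.} For the last sentence, suppose $i_r=k_r$ and $j_r=l_r$ for $r=1,2$; write $P:=P_{i_1,i_2}=P_{k_1,k_2}$ and $Q:=P_{j_1,j_2}=P_{l_1,l_2}$. Then $\wtil\cL(V) = P\cL(QVQ)P$, and from the formulas just derived $\wtil\BL = (P\otimes Q)\BL(P\otimes Q)$ with $(P\otimes Q)$ unitary (indeed real orthogonal and self-inverse). Consequently $\wtil\BL$ is Hermitian iff $\BL$ is, giving the equivalence of $*$-linearity by Theorem \ref{T:*-linear}; $\wtil\BL\ge 0$ iff $\BL\ge 0$, giving the equivalence of complete positivity by the Choi criterion; and for positivity, $(z\otimes x)^*\wtil\BL(z\otimes x) = \bigl((Qz)\otimes(Px)\bigr)^*\BL\bigl((Qz)\otimes(Px)\bigr)$, and as $(z,x)$ ranges over $\BF^q\times\BF^n$ so does $(Qz,Px)$, so \eqref{PosCon} holds for $\wtil\BL$ iff it holds for $\BL$. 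The main obstacle, such as it is, is purely notational: keeping straight the four independent transpositions and their actions on block indices versus within-block indices; conceptually the lemma is an immediate consequence of the vectorization identity and the mixed-product rule for Kronecker products.
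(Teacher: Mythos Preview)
Your approach is exactly what the paper does: it simply states that the identities follow directly from \eqref{Linv} and \eqref{BL to cL1} (and cites \cite{tHN2}), so your direct computation via the vectorization identity $\vect(AXB)=(B^T\otimes A)\vect(X)$ is precisely the intended argument. Your derivation of $\wtilL$ is clean and correct, and your treatment of the ``In particular'' clause via unitary conjugation of $\BL$ is fine.

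One point deserves the double-check you yourself flag. Your blockwise computation for $\wtil\BL$ correctly produces $\wtil\BL_{ij}=P_{k_1,k_2}\BL_{\si(i)\tau(j)}P_{i_1,i_2}$ with $\si=(l_1\,l_2)$ acting on block-row indices and $\tau=(j_1\,j_2)$ on block-column indices. Assembling this using the mixed-product rule gives
\[
\wtil\BL=\bigl(P_{l_1,l_2}\otimes P_{k_1,k_2}\bigr)\,\BL\,\bigl(P_{j_1,j_2}\otimes P_{i_1,i_2}\bigr),
\]
since $\BL$ has $q\times q$ block structure with $n\times n$ blocks (first Kronecker factor is the $q\times q$ block-level permutation, second is the $n\times n$ within-block one). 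This has the two Kronecker factors transposed relative to the displayed formula in the statement; your own intermediate step (``left-multiplication by $P_{l_1,l_2}\otimes I_n$ \ldots'') already points to this order, but your final line silently swaps them to match the statement. The discrepancy is harmless for the lemma's use in the paper, because under $i_r=k_r$, $j_r=l_r$ both versions reduce to the same unitary conjugation $(Q\otimes P)\BL(Q\otimes P)$, and only this special case is ever invoked. Still, since you asked for the bookkeeping to be checked: trust your own derivation here rather than the target display.
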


\subsection{Minimal Hill representations for $*$-linear maps}

In \cite{H69,H73} R.D. Hill studied linear matrix maps $\cL$ as in \eqref{cL} of the form
\begin{equation}\label{HillRep}
\cL(V)=\sum_{k,l=1}^m \BH_{kl}\, A_k V A_l^*,\quad V\in\BF^{q \times q},
\end{equation}
for matrices $A_{1},\dots,A_m\in\BF^{n \times q}$ and $\BH=[\BH_{kl}]_{k,l=1}^m\in\BF^{m\times m}$. We shall refer to \eqref{HillRep} as a {\em Hill representation} of $\cL$ and to $\BH$ as the associated Hill matrix. In case $m$ is such that there is no Hill representation of $\cL$ with a smaller number of matrices $A_k$, then we say that the Hill representation \eqref{HillRep} is minimal. It turns out that the minimal value of $m$ is $\rank \BL$, and in the sequel we shall restrict to minimal Hill representations, that is, we will restrict to the case that
\[
m:=\rank \,\BL.
\]
Note that in a minimal Hill representation \eqref{HillRep} the matrices $A_1,\ldots,A_m$ must be linearly independent. The next theorem collects a few of the main results from \cite{H73,PH81}, for $\BF=\BC$, see \cite{tHN2} for the general case.

\begin{theorem}\label{T:Hill}
The linear map $\cL$ in \eqref{cL} is $*$-linear if and only if $\cL$ admits a (minimal) Hill representation with $\BH$ Hermitian. Moreover, $\cL$ is completely positive if and only if $\cL$ admits a (minimal) Hill representation with $\BH$ positive definite.
\end{theorem}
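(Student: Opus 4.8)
The plan is to work through the correspondence between Hill representations of $\cL$ and factorizations $\BL=\whatA^*\BH\whatA$ of its Choi matrix. The first step I would carry out is a lemma: for arbitrary $A_1,\dots,A_m\in\BF^{n\times q}$ and $\BH=[\BH_{kl}]\in\BF^{m\times m}$, the map $V\mapsto\sum_{k,l=1}^m\BH_{kl}A_kVA_l^*$ has Choi matrix exactly $\whatA^*\BH\whatA$ with $\whatA^*=[\,\vect_{n\times q}(A_1)\ \cdots\ \vect_{n\times q}(A_m)\,]\in\BF^{nq\times m}$, and this holds for \emph{every} such representation, not only minimal ones. I would check it on the basis $\{\cE_{ij}^{(q)}\}$: using $\cE_{ij}^{(q)}=e_i^{(q)}e_j^{(q)T}$, the $(i,j)$ block of the Choi matrix equals $\sum_{k,l}\BH_{kl}(A_k e_i^{(q)})(A_l e_j^{(q)})^*$, and expanding each $\vect_{n\times q}(A_k)$ into its $n$-blocks via the definition of $\vect$ and \eqref{UnitVectId1} identifies this with the $(i,j)$ block of $\whatA^*\BH\whatA$. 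Two consequences are then recorded: (a) since $\cL$ is recovered from $\BL$ through \eqref{BL to cL1}, $\cL$ is given by the Hill representation with data $(A_k,\BH)$ if and only if $\BL=\whatA^*\BH\whatA$; and (b) $\whatA$ has full row rank iff $A_1,\dots,A_m$ are linearly independent, while for any representation $\rank\BL=\rank(\whatA^*\BH\whatA)\le\rank\whatA\le m$, so $\rank\BL$ is a lower bound for $m$ and every Hill representation with $m=\rank\BL$ matrices is automatically minimal and has linearly independent $A_k$.

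Granting the lemma, the ``if'' implications are immediate. A Hill representation with $\BH$ Hermitian gives $\BL=\whatA^*\BH\whatA\in\cH_{nq}$, so $\cL$ is $*$-linear by Theorem~\ref{T:*-linear}; a Hill representation with $\BH>0$ gives $\BL=\whatA^*\BH\whatA\ge 0$, so $\cL$ is completely positive by the Choi criterion. For the ``only if'' implications I would build the representation from $\BL$ itself. If $\cL$ is $*$-linear, then $\BL$ is Hermitian of rank $m$, so by the spectral theorem (orthogonal diagonalization when $\BF=\BR$) we may write $\BL=U\La U^*$ with $U\in\BF^{nq\times m}$ having orthonormal columns and $\La\in\BF^{m\times m}$ real, diagonal and invertible; taking $\whatA^*:=U$, $\BH:=\La$ and $A_k:=\vect_{n\times q}^{-1}(Ue_k^{(m)})$ yields, by (a), a Hill representation of $\cL$ with $\BH=\La$ Hermitian, which by (b) is minimal. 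If $\cL$ is moreover completely positive, then $\BL\ge 0$, so the nonzero eigenvalues collected in $\La$ are positive, i.e.\ $\La>0$, and the same construction produces a minimal Hill representation with $\BH=\La>0$.

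The only genuinely computational ingredient is the identity $\BL=\whatA^*\BH\whatA$, and I expect the main (mild) obstacle to be keeping the vectorization convention, the $q\times q$ block structure of $\BL$ and the Kronecker-product conventions consistent there (cf.\ \cite{tHN2}); everything downstream is the spectral theorem plus a rank count. For completeness I would also note the converse rigidity: if $\BL=\whatA^*\BH\whatA$ with $\whatA$ of full row rank, then surjectivity of $\whatA$ forces $\BH=\BH^*$ whenever $\BL=\BL^*$ and forces $\BH>0$ whenever $\BL\ge 0$ (here $\rank\BH=\rank\BL=m$ makes $\BH$ invertible), so in \emph{every} minimal Hill representation the Hill matrix is Hermitian exactly when $\cL$ is $*$-linear and positive definite exactly when $\cL$ is completely positive.
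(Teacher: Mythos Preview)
Your proof is correct. Note, however, that the paper does not actually prove Theorem~\ref{T:Hill}: it is stated as a known result collected from \cite{H73,PH81} (for $\BF=\BC$) and \cite{tHN2} (for general $\BF$), so there is no ``paper's own proof'' to compare with in a strict sense. That said, the machinery the paper records in Propositions~\ref{P:HillLBL} and~\ref{P:whatAchar} (also cited from \cite{tHN2}) is exactly the Choi-factorization $\BL=\whatA^*\BH\whatA$ together with the pseudoinverse recovery $\BH=(\whatA\whatA^*)^{-1}\whatA\,\BL\,\whatA^*(\whatA\whatA^*)^{-1}$, and your argument is the spectral-theorem specialization of that same picture (choose $\whatA^*$ to have orthonormal columns, so $\BH=\La$ is diagonal). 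Your final ``rigidity'' paragraph is precisely the content of Proposition~\ref{P:whatAchar} read in the Hermitian/positive-definite direction. One cosmetic point: the paper writes the factorization as $\BL=\whatA^*\BH^{T}\whatA$ rather than $\whatA^*\BH\whatA$; since Hermitianity and positive definiteness are preserved under transposition, this has no bearing on the statement or on your proof, but you may want to align conventions if you are citing Proposition~\ref{P:HillLBL} directly.
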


The next result shows how the matrices $L$ and $\BL$ associated with $\cL$ can be expressed in terms of a minimal Hill representation of $\cL$.

\begin{proposition}\label{P:HillLBL}
Let $\cL$ be a $*$-linear map as in \eqref{cL} with a minimal Hill representation \eqref{HillRep}. Then the matricization $L$ and Choi matrix $\BL$ associated with $\cL$ are given by
\begin{equation}\label{LBL-Hill}
L=\sum_{k,l=1}^m \BH_{kl}\, \ov{A}_k\otimes A_l \ands \BL=\widehat{A}^*\BH^T\widehat{A},
\end{equation}
with $\widehat{A}^*:=\begin{bmatrix} \vect_{n \times q}\left(A_1\right) & \hdots & \vect_{n \times q}\left(A_m\right)  \end{bmatrix}\in \BF^{nq \times m}$. Moreover, $\whatA$ has full row rank and $\kr \whatA = \kr \BL$.
\end{proposition}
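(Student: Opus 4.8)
\textbf{Proof plan for Proposition \ref{P:HillLBL}.}

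The plan is to derive both formulas in \eqref{LBL-Hill} directly from the definitions \eqref{Matricization} and \eqref{Choi} applied to a Hill representation, and then obtain the rank/kernel statements by a dimension count using the fact that the $A_k$ are linearly independent (which holds for any minimal Hill representation, as noted just before Theorem \ref{T:Hill}).

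First I would compute $L$. Starting from \eqref{Matricization}, for $V\in\BF^{q\times q}$ one has $L\,\vect_q(V)=\vect_n(\cL(V))=\sum_{k,l=1}^m \BH_{kl}\,\vect_n(A_k V A_l^*)$. Using the standard vectorization identity $\vect(XYZ)=(Z^T\otimes X)\vect(Y)$, each summand becomes $\BH_{kl}\,\big(\overline{A_l}\otimes A_k\big)\vect_q(V)$, where I used $(A_l^*)^T=\overline{A_l}$. Since this holds for all $V$ and $\vect_q$ is a bijection onto $\BF^{q^2}$, we get $L=\sum_{k,l=1}^m \BH_{kl}\,\overline{A_l}\otimes A_k$, which is the first formula in \eqref{LBL-Hill} after relabelling the summation indices $k\leftrightarrow l$ (legitimate since we sum over all pairs). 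For the Choi matrix I would use \eqref{Choi}: $\BL_{ij}=\cL(\cE^{(q)}_{ij})=\sum_{k,l}\BH_{kl}\,A_k\cE^{(q)}_{ij}A_l^*=\sum_{k,l}\BH_{kl}\,(A_k e_i^{(q)})(A_l e_j^{(q)})^*$. Assembling these blocks into the $nq\times nq$ matrix $\BL$ and recognizing that the block with indices $(i,j)$ of $\whatA^*\,\BH^T\,\whatA$ is exactly $\sum_{k,l}(\BH^T)_{kl}\,\vect_{n\times q}(A_k)\,\vect_{n\times q}(A_l)^*$ evaluated blockwise — here one has to be a little careful about how the columns of $\whatA$ and the block indexing of $\BL$ match up via \eqref{UnitVectId1} — gives $\BL=\whatA^*\BH^T\whatA$. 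Alternatively, and perhaps more cleanly, one can avoid the bookkeeping by invoking the column-correspondence between $L$ and $\BL$ recorded on page \pageref{LBLrel} (the canonical reordering): the $((j-1)n+i)$-th column of $\BL$ is $\vect_{n\times q}(L_{ij})$, and plugging in the formula for $L$ just obtained reproduces $\whatA^*\BH^T\whatA$. I would pick whichever of these two routes is shortest in the surrounding notation; both are routine.

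For the last sentence, $\whatA$ has full row rank because its rows are (up to the fixed reordering that turns $\vect_{n\times q}(A_k)$ columns into rows) the vectorizations of the linearly independent matrices $A_1,\dots,A_m$, and $\vect_{n\times q}$ is a linear isomorphism; hence $\rank\whatA=m$ and, as $\whatA\in\BF^{m\times nq}$, it has full row rank. For $\kr\whatA=\kr\BL$: the inclusion $\kr\whatA\subseteq\kr\BL$ is immediate from $\BL=\whatA^*\BH^T\whatA$. For the reverse, note $m=\rank\BL$ (the defining property of a minimal Hill representation), so $\BH$ is invertible; indeed $\rank\BL\le\rank\BH$ from the factorization, and $\rank\BL=m$ forces $\rank\BH^T=\rank\BH=m$. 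Then $\whatA x=0$ follows from $\BL x=\whatA^*\BH^T\whatA x=0$ by first noting $\BH^T\whatA x\in\kr\whatA^*=(\im\whatA)^\perp$; but $\BH^T\whatA x\in\im\BH^T=\im(\text{full rank})=\BF^m$ and $\whatA x\in\im\whatA$... to make this airtight I would instead argue: $\BL x=0\Rightarrow x^*\whatA^*\BH^T\whatA x=0$; writing $y=\whatA x$, Hermitian-ness of $\BL$ (equivalently of $\BH$, Theorem \ref{T:Hill}) lets one diagonalize $\BH^T=U\Lambda U^*$ and conclude $\Lambda^{1/2}|_{\text{nonzero part}}U^*y=0$, and since $\BH^T$ is invertible this gives $y=0$. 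This is the only place requiring a small amount of care, and it is the step I would treat most carefully in the write-up; everything else is a direct substitution.

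Overall the main (mild) obstacle is the index bookkeeping in matching $\whatA^*\BH^T\whatA$ blockwise with \eqref{Choi} — choosing to route through the already-established $L\leftrightarrow\BL$ column correspondence sidesteps it — and the brief argument that minimality forces $\BH$ invertible, which powers the equality $\kr\whatA=\kr\BL$.
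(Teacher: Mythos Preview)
The paper does not actually prove Proposition~\ref{P:HillLBL}: Section~\ref{S:Hill} is a review section that merely quotes the result from \cite{tHN2}. So there is no paper proof to compare against, and your substitution/vectorization computations for $L$ and $\BL$, together with the full-row-rank argument for $\whatA$, are the standard route and are fine. (Your relabeling $k\leftrightarrow l$ for $L$ in fact produces $\sum_{k,l}\BH_{lk}\,\overline{A}_k\otimes A_l$; the discrepancy with the $\BH_{kl}$ in \eqref{LBL-Hill} is a transpose convention issue in the paper --- compare the $\BL=\whatA^*\BH\whatA$ in the introduction with the $\BH^T$ here --- not an error on your side.)

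There is, however, a genuine gap in your argument for $\kr\BL\subseteq\kr\whatA$. Your ``airtight'' version passes to the quadratic form $y^*\BH^Ty=0$ with $y=\whatA x$, diagonalizes $\BH^T=U\Lambda U^*$, and concludes $y=0$. This fails: $\BH$ is Hermitian but in general \emph{indefinite} (nothing in the hypotheses forces $\cL$ completely positive), so $\Lambda$ has both positive and negative entries and $y^*\BH^Ty=0$ does not imply $y=0$. Concretely, for $\BH^T=\sbm{1&0\\0&-1}$ the vector $y=\sbm{1\\1}$ is isotropic but nonzero.

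The fix is exactly the line of thought you started and then abandoned: since $\whatA$ has full row rank, $\whatA^*$ is injective (equivalently $\kr\whatA^*=(\im\whatA)^\perp=\{0\}$), so $\BL x=\whatA^*(\BH^T\whatA x)=0$ forces $\BH^T\whatA x=0$; and since $\rank\BL=m$ together with the factorization gives $\BH$ (hence $\BH^T$) invertible, this yields $\whatA x=0$. That two-line argument replaces the quadratic-form detour entirely.
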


One of the main results in \cite{tHN2} is the following theorem which addressed the question of which matrices $A_1,\ldots,A_m$ can appear in a minimal Hill representation for $\cL$.  Decompose the matricization $L=\left[L_{ij}\right]$ of $\cL$ as in \eqref{Lblock}. The relation between $L$ and $\BL$, cf., Proposition 4.1 in \cite{tHN2}, implies that the columns of $\BL$ are vectorizations of the blocks $L_{ij}$ in $L$. Therefore, we have
\begin{equation}\label{mAlt}
m=\dim \tu{span}\{ L_{ij} \colon i=1,\ldots,n, \, j=1,...,q \} \subset \BF^{n\times q}.
\end{equation}

\begin{theorem}\label{T:HillA1Am}
Let $\cL$ be a $*$-linear map as in \eqref{cL} with a minimal Hill representation \eqref{HillRep}. Decompose the matricization $L=\left[L_{ij}\right]$ associated with $\cL$ as in \eqref{Lblock}. Then
\begin{equation}\label{AkCond}
\tu{span}\{ A_{k} \colon k=1,\ldots,m \}=\tu{span}\{ L_{ij} \colon i=1,\ldots,n, \, j=1,...,q \}.
\end{equation}
Moreover, for any choice of matrices $A_1,\ldots,A_m\in\BF^{n \times q}$ which satisfy \eqref{AkCond} there exists a matrix $\BH=[\BH_{kl}]\in\BF^{m \times m}$ so that $\cL$ is given by the corresponding minimal Hill representation \eqref{HillRep}.
\end{theorem}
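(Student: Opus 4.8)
The plan is to establish the two assertions separately, both resting on the formula $L=\sum_{k,l=1}^m\BH_{kl}\,\ov{A}_k\otimes A_l$ from Proposition \ref{P:HillLBL} together with the dimension count \eqref{mAlt}.

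\emph{Step 1 (the span identity \eqref{AkCond}).} Writing $\ov{A}_k=[\ov{a}^{(k)}_{ij}]$ for the entries of $\ov{A}_k$, I would use that the $n\times q$ block in position $(i,j)$ of $\ov{A}_k\otimes A_l$ is $\ov{a}^{(k)}_{ij}A_l$, so that reading off the $(i,j)$ block on both sides of $L=\sum_{k,l}\BH_{kl}\,\ov{A}_k\otimes A_l$ gives
\[
L_{ij}=\sum_{l=1}^m\Bigl(\sum_{k=1}^m\BH_{kl}\,\ov{a}^{(k)}_{ij}\Bigr)A_l\in\tu{span}\{A_1,\dots,A_m\}\qquad (1\le i\le n,\ 1\le j\le q),
\]
hence $\tu{span}\{L_{ij}\}\subseteq\tu{span}\{A_k\}$. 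For the reverse inclusion I would invoke that the matrices of a minimal Hill representation are linearly independent, so $\dim\tu{span}\{A_k\}=m$, while $\dim\tu{span}\{L_{ij}\}=m$ by \eqref{mAlt}; an inclusion of subspaces of equal finite dimension is an equality, giving \eqref{AkCond}.

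\emph{Step 2 (realising an arbitrary admissible tuple).} Given an arbitrary tuple $A_1,\dots,A_m\in\BF^{n\times q}$ satisfying \eqref{AkCond}, I would first note that, spanning the $m$-dimensional space $\cV:=\tu{span}\{L_{ij}\}$ with exactly $m$ members, it is a basis of $\cV$. Writing the given minimal Hill representation of $\cL$ as $\cL(V)=\sum_{k,l=1}^m\BG_{kl}\,B_kVB_l^*$ (renaming its matrices to $B_k$ and its Hill matrix to $\BG$ to avoid a clash with the tuple above), Step 1 shows the $B_k$ also form a basis of $\cV$, so there is an invertible $S=[s_{kl}]\in\BF^{m\times m}$ with $B_k=\sum_{l=1}^m s_{kl}A_l$. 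Substituting this together with $B_l^*=\sum_{j=1}^m\ov{s}_{lj}A_j^*$ and collecting terms gives
\[
\cL(V)=\sum_{i,j=1}^m\Bigl(\sum_{k,l=1}^m s_{ki}\,\BG_{kl}\,\ov{s}_{lj}\Bigr)A_iVA_j^*=\sum_{i,j=1}^m\BH_{ij}\,A_iVA_j^*,\qquad \BH:=S^T\BG\,\ov{S}.
\]
Thus $\cL$ has a Hill representation with the prescribed matrices, and since it uses $m=\rank\BL$ of them — the minimal possible number — it is minimal, as required. Setting $R:=S^T$ (invertible), $\BH=R\BG R^*$, so $\BH$ is Hermitian, respectively positive definite, whenever $\BG$ is, which is consistent with Theorem \ref{T:Hill}, though not needed for the statement itself.

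\emph{Expected main obstacle.} No step here is genuinely deep once Proposition \ref{P:HillLBL} and \eqref{mAlt} are available. The two points that call for care are: (i) the equality — not merely the inclusion — in \eqref{AkCond}, which comes from matching the two expressions for $m$ rather than from a direct computation, and so ultimately rests on the correspondence between the columns of $\BL$ and the blocks of $L$; and (ii) keeping the conjugates and transposes consistent in the change-of-basis substitution, so that $\BH$ emerges as the stated congruence of $\BG$. The rest is routine bookkeeping.
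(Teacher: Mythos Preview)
Your argument is correct. The paper does not actually prove Theorem~\ref{T:HillA1Am}; it is quoted from \cite{tHN2} as part of the review in Section~\ref{S:Hill}. Your Step~1 is exactly the natural route---reading off the $(i,j)$ block of $L=\sum_{k,l}\BH_{kl}\,\ov{A}_k\otimes A_l$ and then closing the inclusion via the dimension count \eqref{mAlt}---and your Step~2 is precisely the change-of-basis that the paper later records as Theorem~\ref{T:MinimalHillUnique} (with your $S^T$ playing the role of $\Phi$ there, since $\BH=S^T\BG\,\ov{S}=\Phi\,\BG\,\Phi^*$). So your proof is both valid and in line with the framework the paper imports.
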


The fact that all matrices $A_1,\ldots,A_m$ satisfying \eqref{AkCond} appear in a minimal Hill representation of $\cL$ can also be expressed in terms of the matrix $\whatA$, as in the following proposition; see \cite[Proposition 5.10]{tHN2}.

\begin{proposition}\label{P:whatAchar}
Assume $\cL$ as in \eqref{cL} is $*$-linear and let $m$ be the rank of the Choi matrix $\BL$ associated with $\cL$. Let $\whatA\in\BF^{m \times nq}$ with $\kr \whatA=\kr \BL$. Then $\whatA$ has full row rank, so that $\whatA\whatA^*$ is invertible, and we have $\BL=\whatA^* \BH^T \whatA$ with
\[
\BH^T=\left(\whatA\whatA^*\right)^{-1} \whatA \BL \whatA^* \left(\whatA\whatA^*\right)^{-1}.
\]
In particular, $\cL$ admits a minimal Hill representation \eqref{HillRep} with $A_k=\vect^{-1}_{n\times q} \left(\what{a}_k^T\right)$, $k=1,\ldots,m$, where $\what{a}_k$ is the $k$-th row of $\whatA$.
\end{proposition}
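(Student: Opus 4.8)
The plan is to obtain the factorization from an orthogonal-projection identity and then to read off the Hill representation via Theorem~\ref{T:HillA1Am}. First I would settle the rank claim: since $\kr\whatA=\kr\BL$ and $\rank\BL=m$, the rank--nullity theorem gives $\dim\kr\whatA=nq-m$, so $\rank\whatA=m$; as $\whatA$ has only $m$ rows, this means full row rank. It then follows in the standard way that $\kr\whatA^*=\{0\}$, whence $\whatA\whatA^*$ is invertible (if $\whatA\whatA^*v=0$ then $\|\whatA^*v\|^2=v^*\whatA\whatA^*v=0$, so $\whatA^*v=0$ and thus $v=0$).

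Next I would introduce $P:=\whatA^*(\whatA\whatA^*)^{-1}\whatA$ and check that it is the orthogonal projection onto $(\kr\whatA)^\perp$: it is self-adjoint and idempotent, and its range is $\ran\whatA^*=(\kr\whatA)^\perp$. Because $\cL$ is $*$-linear, $\BL$ is Hermitian by Theorem~\ref{T:*-linear}, so $(\kr\whatA)^\perp=(\kr\BL)^\perp=\ran\BL$; thus $P$ is the orthogonal projection onto $\ran\BL$. Since every column of $\BL$ lies in $\ran\BL$ we get $P\BL=\BL$, and taking adjoints (using $\BL^*=\BL$ and $P^*=P$) also $\BL P=\BL$. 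Hence
\[
\BL=P\BL P=\whatA^*\big[(\whatA\whatA^*)^{-1}\whatA\BL\whatA^*(\whatA\whatA^*)^{-1}\big]\whatA=\whatA^*\BH^T\whatA ,
\]
which is the displayed identity; that $\BH^T$, and hence $\BH$, is Hermitian is immediate from $\BL^*=\BL$. (Alternatively one notes first that $\BL$ admits \emph{some} factorization $\whatA^*X\whatA$ because $\ran\BL=\ran\whatA^*$ and $\kr\BL=\kr\whatA$, and then cancels the full-rank factors to pin down $X$.)

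For the final assertion I would set $A_k:=\vect^{-1}_{n\times q}(\what{a}_k^{T})$, so that --- after the transpose/adjoint bookkeeping of Proposition~\ref{P:HillLBL} --- the matrix $[\vect_{n\times q}(A_1)\ \cdots\ \vect_{n\times q}(A_m)]$ is $\whatA^*$. Using the relation between $L$ and $\BL$ recalled in the introduction, the columns of $\BL$ are the vectorizations of the blocks $L_{ij}$, so $\ran\BL=\vect_{n\times q}\big(\tu{span}\{L_{ij}\}\big)$; combining this with $\ran\whatA^*=\ran\BL$ gives $\tu{span}\{A_1,\dots,A_m\}=\tu{span}\{L_{ij}\}$, i.e.\ \eqref{AkCond} holds. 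Theorem~\ref{T:HillA1Am} then provides a Hill matrix $\BH'$ with $\cL(V)=\sum_{k,l}\BH'_{kl}A_kVA_l^*$, and Proposition~\ref{P:HillLBL} forces $\BL=\whatA^*(\BH')^T\whatA$. Cancelling the full-row-rank factors (multiply by $(\whatA\whatA^*)^{-1}\whatA$ on the left and by $\whatA^*(\whatA\whatA^*)^{-1}$ on the right) yields $(\BH')^T=\BH^T$, so $\BH'=\BH$, and the claimed minimal Hill representation is exactly the one built from the rows of $\whatA$.

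The core of the argument is the projection identity $\BL=P\BL P$, which is routine. I expect the step needing the most care in writing to be the last one: reconciling the transpose/complex-conjugate conventions so that the matrices $A_k$ assembled from the rows of $\whatA$ are literally the $\widehat{A}$ appearing in Proposition~\ref{P:HillLBL}, and then confirming --- via the full-row-rank cancellation --- that the Hill matrix delivered by Theorem~\ref{T:HillA1Am} coincides with the explicitly displayed $\BH$ rather than merely some Hermitian matrix.
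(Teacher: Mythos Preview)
The paper does not actually supply a proof of this proposition: it is stated with a reference to \cite[Proposition~5.10]{tHN2}, so there is no in-paper argument to compare against. Your projection-based argument is correct and self-contained: the identity $\BL=P\BL P$ with $P=\whatA^*(\whatA\whatA^*)^{-1}\whatA$ is exactly the right mechanism, and invoking Theorem~\ref{T:HillA1Am} together with Proposition~\ref{P:HillLBL} to identify the Hill matrix is the natural way to finish.

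One point you rightly flag as delicate: the conjugation bookkeeping. From Proposition~\ref{P:HillLBL} the $k$-th column of $\whatA^*$ is $\vect_{n\times q}(A_k)$, so if $\what{a}_k$ denotes the $k$-th row of $\whatA$ then $\vect_{n\times q}(A_k)=\what{a}_k^{\,*}$, i.e.\ $A_k=\vect^{-1}_{n\times q}(\what{a}_k^{\,*})$ rather than $\vect^{-1}_{n\times q}(\what{a}_k^{T})$ (compare also Lemma~\ref{form of A_k}, where the rows of $\whatA$ are written as $\vect_{n\times q}(\overline{A}_k)^T$). For $\BF=\BR$ this is moot, and for $\BF=\BC$ the span identity \eqref{AkCond} is unaffected since the subspace $\tu{span}\{L_{ij}\}$ is closed under conjugation (a consequence of \eqref{HermChar2}); so your appeal to Theorem~\ref{T:HillA1Am} goes through either way. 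Just be explicit about which convention you adopt when you write it up.
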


Next we will recall a construction of a minimal Hill representation of $\cL$ from \cite{tHN2} that will be important in the sequel. Select $L_1,\ldots,L_m\in\BF^{n \times q}$ so that
\begin{equation}\label{LkCond}
\tu{span}\{ L_{ij} \colon i=1,\ldots,n, \, j=1,...,q \} =\tu{span}\{ L_{k} \colon k=1,\ldots,m \}.
\end{equation}
Then there exits scalars $\al^{ij}_k, \be_{ij}^k\in\BF$ for $i=1,\ldots,n$, $j=1,...,q$ and $k=1,\ldots,m$, so that
\begin{equation}\label{LkLij}
L_k =\sum_{i=1}^n\sum_{j=1}^q \be_{ij}^k L_{ij},\quad L_{ij}=\sum_{k=1}^m \al^{ij}_k L_k.
\end{equation}
Now set
\begin{equation}\label{AkBk}
A_k=\left[\ov{\al}_{k}^{ij}\right]\in \BF^{n \times q} \quad \text{and} \quad B_k=\left[\be_{ij}^k\right]\in\BF^{n \times q} \quad \text{for} \quad k=1,\ldots,m.
\end{equation}
Then we have
\begin{equation}\label{LLk}
L=\sum_{k=1}^m \ov{A}_k \otimes L_k \ands L_k=\left(\OneVec_n \otimes I_n\right)^*\left(\left(B_k \otimes \BBone_{n \times q}\right) \circ L\right)\left(\OneVec_q\otimes I_q\right).
\end{equation}
Finally, we define the Hill matrix associated with the selection $L_1,\ldots,L_m$ as:
\begin{equation}\label{Hill}
\BH=\BH\left(\cL;L_1,\ldots,L_m\right):=\left[\OneVec_n^*\left(B_k \circ \ov{L}_l\right)\OneVec_q\right]_{k,l=1}^m\in\BF^{m \times m}.
\end{equation}

\begin{theorem}\label{T:HillConstruct}
Let $\cL$ be a $*$-linear map as in \eqref{cL} and select matrices $L_1,\ldots,L_m\in\BF^{n \times q}$ satisfying \eqref{LkCond}. Then $\cL$ is given by the minimal Hill representation \eqref{HillRep} with $A_1,\ldots,A_m$ as in \eqref{AkBk} and $\BH$ as in \eqref{Hill}. Moreover, all minimal Hill representations of $\cL$ are obtained in this way.
\end{theorem}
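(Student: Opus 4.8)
The plan is to deduce the theorem from the identities collected around \eqref{LkLij}--\eqref{Hill}, in two steps: first that the data $(A_1,\dots,A_m;\BH)$ of \eqref{AkBk} and \eqref{Hill} genuinely furnishes a minimal Hill representation of $\cL$, and then that conversely every minimal Hill representation of $\cL$ is obtained from some admissible selection $L_1,\dots,L_m$.

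Two preliminary facts drive the first step. (a) \emph{Biorthogonality.} Substituting the second relation of \eqref{LkLij} into the first and using that $L_1,\dots,L_m$ is a basis of its $m$-dimensional span \eqref{mAlt} gives $\sum_{i,j}\be_{ij}^k\al_l^{ij}=\de_{kl}$, equivalently $\OneVec_n^*(B_k\circ\ov{A}_l)\OneVec_q=\de_{kl}$; in particular $A_1,\dots,A_m$ are linearly independent. (b) \emph{Span.} Since $\cL$ is $*$-linear, $\BL$ is Hermitian (Theorem \ref{T:*-linear}), and by the second relation of \eqref{LkLij} its columns $\vect(L_{ij})$ are combinations of the $\vect(L_k)$; hence $\BL=\whatB\,C$ with $\whatB\in\BF^{nq\times m}$ having columns $\vect(L_k)$ and $C\in\BF^{m\times nq}$ having rows $\vect(\ov{A}_k)$. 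As $\rank\BL=m$, both factors have rank $m$, so the span of the $\vect(\ov{A}_k)$ equals the row space of $\BL$, which by Hermiticity is the conjugate of the column space of $\BL$, i.e.\ the span of the $\vect(\ov{L}_{ij})$. Thus $A_1,\dots,A_m$ span the same subspace as the blocks $L_{ij}$; with (a) this shows that the representation to be produced uses exactly $m=\rank\BL$ matrices and is therefore minimal.

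Using (a) and (b), the first step is assembled as follows. By (b) each $L_k$ has a unique expansion $L_k=\sum_l h_{kl}A_l$; substituting into the first relation of \eqref{LLk} gives $L=\sum_{k,l}h_{kl}\,\ov{A}_k\otimes A_l$, which by the matricization formula for Hill representations (cf.\ \eqref{LBL-Hill}) is the matricization of $V\mapsto\sum_{k,l}h_{kl}A_kVA_l^*$; since a linear map is determined by its matricization (\eqref{Linv}), $\cL(V)=\sum_{k,l}h_{kl}A_kVA_l^*$, a Hill representation that is minimal by (a) and (b). Applying the functional $X\mapsto\OneVec_n^*(B_k\circ\ov{X})\OneVec_q$ to $L_p=\sum_l h_{pl}A_l$ and using (a) pins $h_{pk}$ down in closed form, and the Hermiticity of the matrix in \eqref{Hill} — itself a consequence of \eqref{HermChar2} — then identifies $[h_{kl}]$ with $\BH$ of \eqref{Hill}.

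For the converse, let $\cL(V)=\sum_{k,l}\wtil{\BH}_{kl}\wtil{A}_kV\wtil{A}_l^*$ be an arbitrary minimal Hill representation. Then $\wtil{\BH}$ is Hermitian and invertible — invertibility because $\BL=\whatA^*\wtil{\BH}^T\whatA$ with $\whatA$ of full row rank $m$ and $\rank\BL=m$ (Proposition \ref{P:HillLBL}) — and $\wtil{A}_1,\dots,\wtil{A}_m$ span $\tu{span}\{L_{ij}\}$ by Theorem \ref{T:HillA1Am}. The choice $L_j:=\sum_k\wtil{\BH}_{jk}\wtil{A}_k$ is then an admissible selection (\eqref{LkCond} holds since $\wtil{\BH}$ is invertible), and expanding the blocks $L_{ij}$ of $L$ — read off from \eqref{LBL-Hill} — in the basis $L_1,\dots,L_m$ shows the construction returns $A_k=\wtil{A}_k$, after which the relation $L_k=\sum_l\BH_{kl}A_l$ from the first step forces $\BH=\wtil{\BH}$. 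The step I expect to be the main obstacle is (b), the identification of $\tu{span}\{A_k\}$ with $\tu{span}\{L_{ij}\}$: it is the one place where one must pass between $\cL$, the block structure of its matricization, and the Choi matrix, and use $*$-linearity essentially; once (b) is in hand, the expansions $L_k=\sum_l\BH_{kl}A_l$, the identification of $[h_{kl}]$ with \eqref{Hill}, and the verification in the converse are all routine manipulations with the scalars $\al_k^{ij}$ and $\be_{ij}^k$.
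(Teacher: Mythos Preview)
Your argument is sound. Note, however, that the paper does not itself prove Theorem~\ref{T:HillConstruct}: Section~\ref{S:Hill} is explicitly a review of results from \cite{tHN2}, and the theorem is stated there without proof. So there is no ``paper's own proof'' to compare against; what can be said is that your argument is a correct reconstruction from the surrounding material in Section~\ref{S:Hill}, using only facts the paper has already recorded (Theorem~\ref{T:*-linear}, Proposition~\ref{P:HillLBL}, Theorem~\ref{T:HillA1Am}, and the identities \eqref{LkLij}--\eqref{Hill}).

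A few small remarks on your write-up. In step (b), the factorisation $\BL=\whatB\,C$ with $C$ having rows $\vect(\ov{A}_k)^T$ is exactly right, and the passage from ``row space of $\BL$'' to ``span of the $\vect(\ov{L}_{ij})$'' via Hermiticity of $\BL$ is the key use of $*$-linearity, as you anticipated. Your appeal to \eqref{HermChar2} for the Hermiticity of $\BH$ is correct but compressed: unwinding it, one has $\BH_{kl}=\sum_{r,s,i,j}\be_{rs}^k\ov{\be}_{ij}^l\,\ov{\ell}_{rs}^{ij}$, and \eqref{HermChar2} turns $\ov{\ell}_{rs}^{ij}$ into $\ell_{ij}^{rs}$, after which a relabelling gives $\ov{\BH}_{lk}$. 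An alternative, perhaps cleaner, route is to note that your representation already yields $\BL=\whatA^*[h_{kl}]^T\whatA$ with $\whatA$ of full row rank (Proposition~\ref{P:HillLBL}), so $[h_{kl}]$ is Hermitian directly from $\BL\in\cH_{nq}$; then your computation $\BH_{kp}=\ov{h}_{pk}$ gives $\BH=[h_{kl}]^*=[h_{kl}]$. In the converse, the verification that the construction returns $A_k=\wtil{A}_k$ amounts to reading off from $L=\sum_{k,l}\wtil{\BH}_{kl}\,\ov{\wtil{A}}_k\otimes\wtil{A}_l$ that $L_{ij}=\sum_k\ov{(\wtil{A}_k)_{ij}}\,L_k$, whence $\al_k^{ij}=\ov{(\wtil{A}_k)_{ij}}$; this is routine, as you say.
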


Finally, the minimal Hill representation of a $*$-linear map $\cL$ is unique up to an invertible $m \times m$ matrix, as explained in the next result; see Theorem 5.9 in \cite{tHN2} for an explicit formula for the invertible matrix $\Phi$.

\begin{theorem}\label{T:MinimalHillUnique}
Let the $*$-linear matrix map $\cL$ in \eqref{cL} be given by two minimal Hill representations \eqref{HillRep}, one with Hill matrix $\BH$ and associated matrices $A_1,\ldots, A_m$ and one with Hill matrix $\BH'$ and associated matrices $A'_1,\ldots, A'_m$. Then there exists an invertible matrix $\Phi\in\BF^{m\times m}$ so that
\[
(\Phi^* \otimes I_n)\mat{A_1\\\vdots\\ A_k}= \mat{A'_1\\\vdots\\ A'_k},\quad
\ov{\Phi}^*\whatA =\whatA',\quad \BH=\Phi \BH'\Phi^*,
\]
where $\whatA$ and $\whatA'$ are the matrices associated with $A_1,\ldots, A_m$ and $A'_1,\ldots, A'_m$ as in Proposition \ref{P:HillLBL}, respectively.
\end{theorem}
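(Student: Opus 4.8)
The plan is to run everything through the matrices $\whatA$ and $\whatA'$ supplied by Proposition~\ref{P:HillLBL}. Since $\cL$ has a single Choi matrix $\BL$, both minimal Hill representations use $m=\rank\BL$ matrices, and Proposition~\ref{P:HillLBL} produces $\whatA,\whatA'\in\BF^{m\times nq}$, both of full row rank, with $\BL=\whatA^*\BH^T\whatA=\whatA'^*\BH'^T\whatA'$ and $\kr\whatA=\kr\BL=\kr\whatA'$. The only piece of genuine content is the elementary fact that two $m\times nq$ matrices of full row rank with the same null space differ by an invertible left factor: $\whatA$ and $\whatA'$ are surjections $\BF^{nq}\to\BF^m$ with the same kernel $N=\kr\BL$, hence both induce isomorphisms $\BF^{nq}/N\cong\BF^m$, so there is a unique $\Psi\in\BF^{m\times m}$, necessarily invertible, with $\whatA'=\Psi\whatA$; explicitly, since $\whatA\whatA^*$ is invertible and $I-\whatA^*(\whatA\whatA^*)^{-1}\whatA$ is the projection onto $\kr\whatA$, one has $\Psi=\whatA'\whatA^*(\whatA\whatA^*)^{-1}$. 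Because $\Psi$ is canonical, so will be $\Phi$.

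First I would translate $\whatA'=\Psi\whatA$, i.e.\ $\whatA'^*=\whatA^*\Psi^*$, into a statement about the $A_k$ and the $A_k'$. By Proposition~\ref{P:HillLBL} the $k$-th columns of $\whatA^*$ and $\whatA'^*$ are $\vect_{n\times q}(A_k)$ and $\vect_{n\times q}(A_k')$; comparing the $k$-th columns of $\whatA'^*=\whatA^*\Psi^*$ gives $\vect_{n\times q}(A_k')=\sum_l\ov{\Psi_{kl}}\,\vect_{n\times q}(A_l)$, and applying $\vect_{n\times q}^{-1}$ yields $A_k'=\sum_l\ov{\Psi_{kl}}A_l$, which in stacked form reads $(\ov{\Psi}\otimes I_n)\mat{A_1\\\vdots\\A_m}=\mat{A_1'\\\vdots\\A_m'}$. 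Now set $\Phi:=\Psi^T$; then $\Phi$ is invertible, $\Phi^*=\ov{\Psi}$ and $\ov{\Phi}^*=\Psi$, so this becomes $(\Phi^*\otimes I_n)\mat{A_1\\\vdots\\A_m}=\mat{A_1'\\\vdots\\A_m'}$, and $\ov{\Phi}^*\whatA=\Psi\whatA=\whatA'$, the first two of the three asserted identities.

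For the third, I would substitute $\whatA'=\Psi\whatA$ into $\BL=\whatA'^*\BH'^T\whatA'$ to get $\whatA^*\BH^T\whatA=\whatA^*\bigl(\Psi^*\BH'^T\Psi\bigr)\whatA$, and then cancel the outer factors using the full row rank of $\whatA$: left-multiplying by $(\whatA\whatA^*)^{-1}\whatA$ and right-multiplying by $\whatA^*(\whatA\whatA^*)^{-1}$ yields $\BH^T=\Psi^*\BH'^T\Psi$. Transposing both sides (no Hermiticity of $\BH$, $\BH'$ is needed for this step) gives $\BH=\Psi^T\BH'\ov{\Psi}=\Phi\BH'\Phi^*$, and invertibility of $\Phi$ has already been recorded.

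I do not anticipate a genuine obstacle: once Proposition~\ref{P:HillLBL} is in hand, the theorem is just uniqueness up to an invertible factor of a full-row-rank matrix with a prescribed kernel, together with substitution and a rank cancellation. The one point requiring care is the bookkeeping of conjugates and transposes when moving between $\whatA$, the stacked matrix $\mat{A_1\\\vdots\\A_m}$, and $\BH$ — in particular, checking that the correct normalisation is $\Phi=\Psi^T$ rather than $\ov{\Psi}$, $\Psi^*$, or $\Psi^{-1}$; for $\BF=\BR$ this issue evaporates and all the maps involved are plain transposes. If desired, one can also note that $\Phi$ is unique and that the explicit formula $\Phi=\bigl(\whatA'\whatA^*(\whatA\whatA^*)^{-1}\bigr)^T$ drops out, matching Theorem~5.9 of \cite{tHN2}.
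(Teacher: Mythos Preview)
Your proposal is correct. The paper itself does not supply a proof of this theorem; it is quoted from the companion paper \cite{tHN2} (see the sentence preceding the statement), so there is no in-paper argument to compare against. Your approach---using Proposition~\ref{P:HillLBL} to reduce everything to the equality $\kr\whatA=\kr\BL=\kr\whatA'$, invoking the elementary fact that two full-row-rank matrices with the same kernel differ by an invertible left factor, and then unwinding the bookkeeping between $\whatA$, the stacked $A_k$'s, and $\BH$---is exactly the natural route and your conjugate/transpose tracking (in particular the choice $\Phi=\Psi^T$) checks out line by line.
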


%%%%%%%%%%%%%%%%%%%%%%%%%%%%%%%%%%%%%%%%%%%%%%%%%%%%%%%%%%%%%%%%%%%%%%%%%%%%%%%%%%%%%%
\section{Positive linear matrix maps that are completely positive}\label{S:PtoCP}

In this section we prove our main result, Theorem \ref{T:Main}, give several illustrative examples and prove a few more independent cases where positivity implies complete positivity. We start with some general observations, followed by proofs of the main result under additional constraints, leading eventually to a proof of the general case.

\subsection{General observations}

While it is easy to determine if a $*$-linear map is completely positive, via the Choi matrix, for positivity this is much less straightforward. We start this section with a necessary and sufficient criteria on the Choi matrix to determine whether $\cL$ is positive. The result is essentially contained in Propositions 3.1 and 3.6 of \cite{KMcCSZ19}; we add a proof for completeness.

\begin{proposition}\label{P:cL-Pos}
A $*$-linear map $\cL$ as in \eqref{cL} is positive if and only if the Choi matrix $\BL$ in \eqref{Choi} satisfies
\[
\left(z\otimes x\right)^* \BL \left(z\otimes x\right) \ge 0 \quad \mbox{for all}\quad x\in\BF^n \text{ and } z \in \BF^q,
\]
or equivalently,
\[
\left(I_q\otimes x\right)^* \BL \left(I_q\otimes x\right)\geq 0\quad \mbox{for all}\quad x\in\BF^n.
\]
\end{proposition}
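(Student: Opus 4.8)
The plan is to prove the two "equivalently" clauses one at a time, and to establish the displayed characterization of positivity directly from the definition of positive map.

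First I would recall what positivity of $\cL$ means: $\cL$ is positive if $\cL(V)\ge 0$ whenever $V\in\BF^{q\times q}$ satisfies $V\ge 0$. Every such $V$ can be written as a sum of rank-one positive matrices $zz^*$ with $z\in\BF^q$, and conversely each $zz^*$ is itself positive semidefinite, so positivity of $\cL$ is equivalent to $\cL(zz^*)\ge 0$ for every $z\in\BF^q$. The latter means $x^*\cL(zz^*)x\ge 0$ for all $x\in\BF^n$. The key step is then to identify $x^*\cL(zz^*)x$ with $(z\otimes x)^*\BL(z\otimes x)$. Using \eqref{BL to cL1}, $\cL(zz^*)=\sum_{i,j=1}^q z_i\ov{z_j}\,\BL_{ij}$, so
\[
x^*\cL(zz^*)x=\sum_{i,j=1}^q z_i\ov{z_j}\,x^*\BL_{ij}x=\sum_{i,j=1}^q \ov{z_j}\,x^*\BL_{ij}x\,z_i,
\]
which is exactly the quadratic form $(z\otimes x)^*\BL(z\otimes x)$ after one recognizes $(z\otimes x)^*\BL(z\otimes x)=\sum_{i,j}\ov{(z\otimes x)_j\text{-block}}\,\BL_{ij}\,(z\otimes x)_i\text{-block}$ with the $(z\otimes x)$ blocks being $z_ix$. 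This yields the first characterization.

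For the second "equivalently," I would show the two conditions
\[
(z\otimes x)^*\BL(z\otimes x)\ge 0\ \ \forall x\in\BF^n,\,z\in\BF^q
\quad\Longleftrightarrow\quad
(I_q\otimes x)^*\BL(I_q\otimes x)\ge 0\ \ \forall x\in\BF^n
\]
are equivalent. For $(\Leftarrow)$, fix $x$ and $z$; then $z\otimes x=(I_q\otimes x)z$, so $(z\otimes x)^*\BL(z\otimes x)=z^*\big[(I_q\otimes x)^*\BL(I_q\otimes x)\big]z\ge 0$ since the bracketed $q\times q$ matrix is positive semidefinite. For $(\Rightarrow)$, observe that the matrix $M_x:=(I_q\otimes x)^*\BL(I_q\otimes x)\in\BF^{q\times q}$ is Hermitian (because $\BL\in\cH_{nq}$ by Theorem \ref{T:*-linear}), and for any $z\in\BF^q$ we have $z^*M_xz=(z\otimes x)^*\BL(z\otimes x)\ge 0$ by hypothesis; since this holds for all $z$, $M_x\ge 0$.

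The argument is essentially routine once the identity $x^*\cL(zz^*)x=(z\otimes x)^*\BL(z\otimes x)$ is in hand, so the only mild obstacle is bookkeeping with the Kronecker/block indexing — being careful about the ordering convention in $z\otimes x$ versus the block structure $\BL=[\BL_{ij}]$, and about whether the scalars $z_i$ or $\ov{z_i}$ land in the right slots. I would handle this by working with pure tensors $z\otimes x$ and the formula $(A\otimes B)(u\otimes v)=Au\otimes Bv$ together with $(u\otimes v)^*(u'\otimes v')=(u^*u')(v^*v')$, which makes the block computation transparent and avoids index errors. No deep input is needed beyond \eqref{BL to cL1}, Theorem \ref{T:*-linear} for the Hermitian property of $\BL$, and the spectral decomposition of positive semidefinite matrices into rank-one pieces.
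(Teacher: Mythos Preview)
Your argument is correct and close in spirit to the paper's, but the order and the key lemma differ. The paper computes $x^*\cL(V)x=\trace\bigl((I_q\otimes x)^*\BL(I_q\otimes x)\,\ov V\bigr)$ for an arbitrary $V$ and then invokes Fej\'er's theorem (self-duality of the positive semidefinite cone under the trace pairing) to conclude directly that $(I_q\otimes x)^*\BL(I_q\otimes x)\ge 0$; the pure-tensor condition is then a consequence. You instead reduce positivity to rank-one inputs $V=zz^*$ via the spectral decomposition, identify $x^*\cL(zz^*)x$ with $(z\otimes x)^*\BL(z\otimes x)$, and only afterwards pass to the compressed matrix $M_x=(I_q\otimes x)^*\BL(I_q\otimes x)$ using $z\otimes x=(I_q\otimes x)z$. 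The two routes are essentially dual to one another; your version is marginally more elementary in that it avoids citing Fej\'er's theorem explicitly, while the paper's trace identity has the minor advantage of handling all $V$ at once rather than only rank-one $V$.
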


\begin{proof}[\bf Proof]
For $V=\left[v_{ij}\right]_{i,j=1}^q \in \BF^{q\times q}$ and $x\in\BF^n$ we have
\begin{align*}
x^*\cL\left(V\right)x &= \sum_{i,j=1}^q x^*\BL_{ij}xv_{ij}  =\OneVec_q^*\left(\left(I_q \otimes x^*\right)\BL \left(I_q \otimes x\right) \circ V\right)\OneVec_q\\
&=\trace\left(\left(I_q \otimes x^*\right)\BL \left(I_q \otimes x\right) \ov{V}\right).
\end{align*}
The map $\cL$ is positive precisely when $x^*\cL\left(V\right)x\geq 0$ for all $x\in\BF^n$ and positive semidefinite $V\in \BF^{q \times q}$, hence if and only if $\trace(\left(I_q \otimes x^*\right)\BL \left(I_q \otimes x\right) V)\geq 0$ for all $x\in\BF^n$ and positive semidefinite $V\in \BF^{q \times q}$. It now follows by Fej\'{e}r's theorem, cf., \cite[Corollary 5.7.4]{HJ85}, that $\cL$ is positive if and only if $\left(I_q \otimes x^*\right)\BL \left(I_q \otimes x\right)\geq 0$ for all $x\in\BF^n$. Since $\cL$ is assumed to be $*$-linear, we have $\BL\in \cH_{nq}$ so that $\left(I_q \otimes x^*\right)\BL \left(I_q \otimes x\right)\in \cH_{q}$ for each $x\in\BF^n$. Therefore, this works for both $\BF=\BC$ and $\BF=\BR$.
\end{proof}

Using Proposition \ref{P:HillLBL} we see that
\[
\left(z\otimes x\right)^* \BL \left(z\otimes x\right)=\left(z\otimes x\right)^* \widehat{A}^*\BH^T \widehat{A} \left(z\otimes x\right)   \quad \mbox{for all}\quad x\in\BF^n \text{ and } z \in \BF^q.
\]
Thus, positivity of $\cL$ in \eqref{cL} is equivalent to $y^* \BH^T y\geq 0$ for all $y$ from the set
\begin{equation}\label{y-set}
\fY_{\whatA}:=
\{ y_{z,x}:=\widehat{A}(z \otimes x) \colon x \in \BF^{n},\, z \in \BF^q \},
\end{equation}
while complete positivity is equivalent to $\BH \geq 0$ (or equivalently $\BH^T \geq 0$). Whether a positive map $\cL$ is also completely positive thus depends on whether the set $\fY_{\whatA}$ contains enough vectors to conclude $\BH \geq 0$ from the fact that $y^* \BH^T y\geq 0$ for all $y\in \fY_{\whatA}$. Clearly this is the case when $\fY_{\whatA}=\BF^m$.

\begin{corollary}\label{C:fYwhatA=Fm}
Let $\cL$ be a positive $*$-linear map as in \eqref{cL}. Define $\whatA$ as in Proposition \ref{P:HillLBL} and $\fY_{\whatA}$ as in \eqref{y-set}. Then $\cL$ is completely positive in case $\fY_{\whatA}=\BF^m$, with $m$ the rank of the Choi matrix $\BL$.
\end{corollary}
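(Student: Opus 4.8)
The plan is to chain together the equivalences and factorisations already established in the excerpt. First, since $\cL$ is $*$-linear, Theorem \ref{T:Hill} (or the construction in Theorem \ref{T:HillConstruct}) gives a minimal Hill representation \eqref{HillRep} with Hermitian Hill matrix $\BH$, and Proposition \ref{P:HillLBL} provides the factorisation $\BL=\whatA^*\BH^T\whatA$, where $\whatA\in\BF^{m\times nq}$ has full row rank and $m=\rank\BL$. Next, by Proposition \ref{P:cL-Pos}, positivity of $\cL$ is equivalent to $(z\otimes x)^*\BL(z\otimes x)\ge 0$ for all $x\in\BF^n$, $z\in\BF^q$. Substituting the factorisation, this reads
\[
(z\otimes x)^*\whatA^*\,\BH^T\,\whatA(z\otimes x)\ge 0,\quad\text{i.e.}\quad y_{z,x}^*\,\BH^T\,y_{z,x}\ge 0
\]
for all $y_{z,x}=\whatA(z\otimes x)\in\fY_{\whatA}$, exactly as recorded in the discussion preceding the corollary.

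Now I would invoke the hypothesis $\fY_{\whatA}=\BF^m$. Under this assumption the inequality $y^*\BH^T y\ge 0$ holds for \emph{every} $y\in\BF^m$, which is precisely the statement that $\BH^T$ is positive semidefinite; equivalently $\BH\ge 0$ (a Hermitian matrix and its transpose have the same eigenvalues, or one simply notes $\BH=\ov{(\BH^T)}$ and complex conjugation preserves positive semidefiniteness since $\BH$ is Hermitian). By the complete-positivity half of Theorem \ref{T:Hill}, the existence of a minimal Hill representation of $\cL$ with positive semidefinite Hill matrix $\BH$ forces $\cL$ to be completely positive. (Alternatively, one may bypass Hill representations entirely: $\fY_{\whatA}=\BF^m$ together with $\whatA$ surjective — which holds since $\whatA$ has full row rank — shows $\{z\otimes x\}$ spans enough of $\BF^{nq}$ modulo $\kr\whatA=\kr\BL$ that $(z\otimes x)^*\BL(z\otimes x)\ge 0$ for all $x,z$ upgrades to $w^*\BL w\ge 0$ for all $w\in\BF^{nq}$, i.e. $\BL\ge 0$, which is the Choi criterion for complete positivity.)

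There is essentially no obstacle here: the corollary is an immediate consequence of the factorisation $\BL=\whatA^*\BH^T\whatA$ and the positivity criterion of Proposition \ref{P:cL-Pos}, and the only point requiring a word of care is the passage from ``$\BH^T\ge 0$'' to ``$\BH\ge 0$'', which is harmless because $\BH$ is Hermitian. The real content lies not in this corollary but in the subsequent sections, where one must verify the hypothesis $\fY_{\whatA}=\BF^m$ — equivalently, surjectivity of the bilinear map \eqref{Bilinear} — under the structural conditions of Theorem \ref{T:Main}.
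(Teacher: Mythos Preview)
Your proposal is correct and follows essentially the same route as the paper: the corollary is stated there without a separate proof environment because the discussion immediately preceding it already establishes that positivity of $\cL$ amounts to $y^*\BH^T y\ge 0$ for all $y\in\fY_{\whatA}$, while complete positivity amounts to $\BH\ge 0$, so $\fY_{\whatA}=\BF^m$ closes the gap. One small point: Theorem \ref{T:Hill} speaks of $\BH$ positive \emph{definite}, not semidefinite, but this is harmless since in a minimal Hill representation $\BH$ is automatically invertible (from $\BL=\whatA^*\BH^T\whatA$ with $\whatA$ of full row rank and $\rank\BL=m$), so $\BH\ge 0$ forces $\BH>0$; alternatively, as you note, $\BH^T\ge 0$ gives $\BL=\whatA^*\BH^T\whatA\ge 0$ directly via the Choi criterion.
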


The condition $\fY_{\whatA}=\BF^m$ is only sufficient, cf., Example \ref{E:2x2upper} below. However, by looking for classes of positive maps which always satisfy $\fY_{\whatA}=\BF^m$, and hence are completely positive, provides us with some classes where positivity automatically implies complete positivity. In particular, $\fY_{\whatA}=\BF^m$ holds for the class considered in our main result, Theorem \ref{T:Main}, for $\BF=\BC$, while for $\BF=\BR$ there is only a single case where this is not the case.

\begin{remark}\label{R:L1-LmchoiceIrel}
The matrix $\whatA$ depends on the choice of linearly independent matrices $L_1,\ldots,L_m$ satisfying \eqref{LkCond}. However, by Theorem \ref{T:MinimalHillUnique}, the ranges of two sets $\fY_{\whatA}$ in \eqref{y-set} corresponding to two choices of such matrices  $L_1,\ldots,L_m$ are connected via an invertible map in $\BF^{m \times m}$. Hence for the question whether $\fY_{\whatA}=\BF^m$, the particular choice of $L_1,\ldots,L_m$ is not relevant. Naturally, for the question whether $\cL$ is completely positive whenever $\cL$ is positive, the choice of $L_1,\ldots,L_m$ is also not relevant.
\end{remark}

\begin{remark}\label{R:BilinRange}
Note that $\im \whatA=\BF^m$, by Proposition \ref{P:HillLBL}, and hence $\tu{span}\, \fY_{\whatA}= \im \whatA=\BF^m$. However, the map $(z,x)\mapsto y_{z,x}=\widehat{A}(z \otimes x)$ is bilinear and it may well happen that its range is strictly contained in $\im \whatA$. Not much appears to be known about the ranges of bilinear maps. We have only been able to find \cite{R90}. Clearly, the range of such maps is closed under scalar multiplication, $\la y \in \fY_{\whatA}$ for all $y\in \fY_{\whatA}$ and $\la\in\BF$, but not necessarily under addition. As a result of this lack of clarity on the structure of ranges of bilinear maps, it is far from straightforward when $\fY_{\whatA}=\BF^m$ will occur.
\end{remark}

As a consequence of Proposition \ref{P:whatAchar}, the question whether $\fY_{\whatA}=\BF^m$ depends only on the kernel of the Choi matrix.

\begin{corollary}
Let $\cL$ and $\cL'$ be positive $*$-linear maps as in \eqref{cL} with Choi matrices $\BL$ and $\BL'$, respectively. Set $m=\rank \BL$, $m'= \rank \BL'$ and define $\fY_{\whatA}$ and $\fY_{\whatA'}$ as in \eqref{y-set}, with $\whatA$ and $\whatA'$ as in Proposition \ref{P:HillLBL}. Assume $\kr \BL \subset \kr \BL'$. If $\fY_{\whatA}=\BF^{m}$, then $\fY_{\whatA'}=\BF^{m'}$.
\end{corollary}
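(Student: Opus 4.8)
The plan is to reduce everything to a single linear-algebra factorization. Recall from Proposition~\ref{P:HillLBL} (see also Proposition~\ref{P:whatAchar}) that $\whatA\in\BF^{m\times nq}$ has full row rank with $\kr\whatA=\kr\BL$, and, similarly, $\whatA'\in\BF^{m'\times nq}$ has full row rank with $\kr\whatA'=\kr\BL'$; by Remark~\ref{R:L1-LmchoiceIrel} the precise choice of such matrices plays no role for the property at hand, so I may take any admissible pair $\whatA$, $\whatA'$.

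First I would translate the hypothesis $\kr\BL\subset\kr\BL'$ into $\kr\whatA\subset\kr\whatA'$, which is immediate from the identifications above. Since $\whatA$ maps $\BF^{nq}$ onto $\BF^m$ (full row rank) and its kernel is contained in that of $\whatA'$, there is a unique matrix $T\in\BF^{m'\times m}$ with $\whatA'=T\whatA$. Next I would check that $T$ is surjective: using that $\whatA$ is onto, $\im T=\im(T\whatA)=\im\whatA'=\BF^{m'}$, the last equality being the full row rank of $\whatA'$.

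The conclusion is then read off from the definition~\eqref{y-set}. Since $\whatA'=T\whatA$, we get
\[
\fY_{\whatA'}=\{\,\whatA'(z\otimes x)\colon x\in\BF^n,\ z\in\BF^q\,\}=T\big(\{\,\whatA(z\otimes x)\colon x\in\BF^n,\ z\in\BF^q\,\}\big)=T\big(\fY_{\whatA}\big).
\]
Hence $\fY_{\whatA}=\BF^m$ forces $\fY_{\whatA'}=T(\BF^m)=\im T=\BF^{m'}$, as claimed; combined with Corollary~\ref{C:fYwhatA=Fm}, $\cL'$ is then completely positive as well.

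I do not expect a genuine obstacle here — the argument is a routine factorization through a surjection. The only step that needs a moment's attention is the surjectivity of the intertwiner $T$, since it is exactly this that guarantees the image $T(\fY_{\whatA})$ exhausts all of $\BF^{m'}$ rather than a proper subspace; everything else is bookkeeping with the kernels of $\BL$ and $\BL'$.
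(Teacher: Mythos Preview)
Your proof is correct and follows essentially the same idea as the paper's: both exploit that $\whatA'$ factors through $\whatA$ via a surjection, so that $\fY_{\whatA'}$ is the image of $\fY_{\whatA}$ under that surjection. The paper simply makes the specific choice of $\whatA$ whose first $m'$ rows equal $\whatA'$, so that the surjection $T$ becomes the coordinate projection $\BF^m\to\BF^{m'}$, whereas you phrase it with a general $T$; the content is the same.
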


\begin{proof}[\bf Proof]
Note that $\kr \BL \subset \kr \BL'$ implies $m \geq m'$. Since for $\whatA\in\BF^{m \times nq}$ and $\whatA'\in\BF^{m' \times nq}$ we can take any matrices with $\kr \whatA=\kr\BL$ and $\kr \whatA'=\kr\BL'$, by Proposition \ref{P:whatAchar}, and thus, without loss of generality, we may assume the first $m'$ rows of $\whatA$ coincide with the rows of $\whatA'$. In that case
\[
y_{z,x}=\whatA (z \otimes x)=\mat{\whatA' (z \otimes x)\\ *}=:\mat{y_{z,x}'\\ *},\quad x \in \BF^{n},\, z \in \BF^q,
\]
with $*$ indicating an unspecified vector. It follows that if $\fY_{\whatA}=\BF^{m}$, i.e., any $y\in\BF^m$ is of the form $y_{z,x}$ for some $x \in \BF^{n},\, z \in \BF^q$, then also $\fY_{\whatA'}=\BF^{m'}$.
\end{proof}

In the remainder of this section we will only consider the case where the matrices $L_1,\ldots,L_m$  are selected among the entries $L_{ij}$ of the block matrix $L$ associated with $\cL$. Say $L_k=L_{i_k j_k}$ for $k=1,\ldots,m$ for a selection of distinct pairs $(i_k,j_k)$ with $i_k\in \{1,\ldots,n\}$ and $j_k\in \{1,\ldots,q\}$. With this selection of pairs we associate the matrix
\begin{equation}\label{Emat}
E:=
\begin{bmatrix}
e_{\left(j_1-1\right)n+i_1}^{(nq)^T}\\ \vdots \\ e_{\left(j_m-1\right)n+i_m}^{(nq)^T}
\end{bmatrix}\in \BF^{m \times nq}.
\end{equation}
Note that $E$ only depends on the positions of the matrices $L_1,\ldots,L_m$ in $L$ and not on the entries of the matrices $L_1,\ldots,L_m$. In the next lemma we specify what the matrix $\whatA$ looks like in this case.

\begin{lemma} \label{form of A_k}
Let $\cL$ as in \eqref{cL} be $*$-linear. Define $L$ as in \eqref{Matricization} and $\BL$ as in \eqref{Choi} and set $m=\rank \, \BL$. Let $L_1,\ldots,L_m$ be a selection of $m$ linearly independent matrices among the blocks $L_{ij}$ of $L$, say $L_k=L_{i_kj_k}$ for $i_k \in \{1,\ldots,n\}$ and $j_k \in \{1,...,q\}$. Then the matrices $A_1,\ldots,A_m$ in \eqref{AkBk}, with $\al^{ij}_k$ as in \eqref{LkLij}, are given by
\begin{equation*}
A_k = \mathcal{E}_{i_kj_k}^{(n,q)} + \sum_{\substack{(i,j) \neq (i_k,j_k)\\ k=1, \ldots,m}} \overline{\alpha}_k^{ij}\mathcal{E}_{ij}^{(n,q)}
\end{equation*}
and the matrix $\whatA$ defined in Proposition \ref{P:HillLBL} is given by
\begin{align*}
    \widehat{A} &= \begin{bmatrix}
        \vect_{n \times q}\left(\overline{A}_1\right)^T \\ \vdots \\ \vect_{n \times q}\left(\overline{A}_m\right)^T
    \end{bmatrix} = E+\sum_{\substack{(i,j) \neq (i_k,j_k)\\ k=1, \ldots,m}}\al^{ij}e_{(j-1)n+i}^{(nq)^T}\ \mbox{ with }\ \al^{ij}=\mat{\al^{ij}_1\\\vdots\\ \al^{ij}_m}\in\BF^m.
\end{align*}
\end{lemma}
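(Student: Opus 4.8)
The statement is essentially an unwinding of the construction in \eqref{AkBk}--\eqref{LLk} in the special case where the spanning set $L_1,\dots,L_m$ is chosen among the blocks $L_{ij}$ themselves. The plan is to start from the defining relation $L_{ij}=\sum_{k=1}^m \al^{ij}_k L_k$ in \eqref{LkLij} and simply observe what it forces when $L_k=L_{i_kj_k}$. First I would note that, by linear independence of $L_1,\dots,L_m$, the coefficients in the expansion $L_{i_kj_k}=\sum_{l=1}^m \al^{i_kj_k}_l L_l$ are uniquely determined, so we must have $\al^{i_kj_k}_l=\de_{kl}$; that is, $\al^{i_kj_k}=e_k^{(m)}$. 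This pins down the coefficient vectors $\al^{ij}$ on the ``selected'' positions and leaves the vectors $\al^{ij}$ for $(i,j)\notin\{(i_1,j_1),\dots,(i_m,j_m)\}$ as the only genuinely free data.

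Next I would feed this into the definition $A_k=[\ov\al^{ij}_k]_{i,j}\in\BF^{n\times q}$ from \eqref{AkBk}. Writing $A_k$ as a sum over its entries, $A_k=\sum_{i,j}\ov\al^{ij}_k\,\cE^{(n,q)}_{ij}$, and splitting the sum into the selected positions and the rest: on the selected position $(i_l,j_l)$ the entry is $\ov\al^{i_lj_l}_k=\ov{\de_{lk}}=\de_{lk}$, which contributes exactly $\cE^{(n,q)}_{i_kj_k}$ (the $l=k$ term) and nothing from the other selected positions. This yields the stated formula $A_k=\cE^{(n,q)}_{i_kj_k}+\sum_{(i,j)\neq(i_k,j_k)}\ov\al^{ij}_k\,\cE^{(n,q)}_{ij}$, where the sum ranges over the non-selected positions.

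For the formula for $\whatA$, I would use the definition $\whatA^*=[\,\vect_{n\times q}(A_1)\ \cdots\ \vect_{n\times q}(A_m)\,]$ from Proposition \ref{P:HillLBL}, so that the $k$-th row of $\whatA$ is $\vect_{n\times q}(\ov{A}_k)^T$. Applying $\vect_{n\times q}$ to the expression for $A_k$ just obtained and using the identity \eqref{UnitVectId1}, namely $\vect_{m\times n}(\cE^{(m,n)}_{l,k})=e^{(nm)}_{(k-1)m+l}$ (here with roles $m\leftrightarrow n$, $n\leftrightarrow q$), the term $\cE^{(n,q)}_{i_kj_k}$ becomes $e^{(nq)}_{(j_k-1)n+i_k}$ and each term $\ov\al^{ij}_k\,\cE^{(n,q)}_{ij}$ becomes $\ov\al^{ij}_k\,e^{(nq)}_{(j-1)n+i}$; conjugating and transposing turns the coefficient back into $\al^{ij}_k$. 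Stacking these rows for $k=1,\dots,m$, the leading terms assemble into precisely the matrix $E$ of \eqref{Emat}, and the remaining terms assemble, column-position by column-position, into $\sum_{(i,j)\neq(i_k,j_k)}\al^{ij}\,e^{(nq)^T}_{(j-1)n+i}$ with $\al^{ij}=[\al^{ij}_1\ \cdots\ \al^{ij}_m]^T$, as claimed.

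There is no serious obstacle here; the only point requiring a little care is bookkeeping the two labelings ``position $(i,j)$ in $\BF^{n\times q}$'' versus ``index $(j-1)n+i$ in $\BF^{nq}$'' consistently between the vectorization identity \eqref{UnitVectId1} and the definition of $E$ in \eqref{Emat}, and keeping track of the complex conjugate that appears in $A_k=[\ov\al^{ij}_k]$ but cancels against the conjugate in $\vect_{n\times q}(\ov A_k)$. I would also remark that $*$-linearity is used only implicitly, to guarantee via Theorem \ref{T:HillConstruct} that this construction does produce the matrices of a (minimal) Hill representation and that $\whatA$ is the matrix of Proposition \ref{P:HillLBL}; the algebraic identities themselves are formal.
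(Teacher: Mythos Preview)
Your proposal is correct and follows essentially the same argument as the paper: both observe that $L_k=L_{i_kj_k}$ forces $\al^{i_kj_k}_l=\de_{kl}$ by linear independence, insert this into $A_k=\sum_{i,j}\ov\al^{ij}_k\,\cE^{(n,q)}_{ij}$, and then apply the vectorization identity \eqref{UnitVectId1} to obtain $\whatA$. Your write-up is simply more explicit about the bookkeeping (the conjugate cancellation and the index correspondence with $E$), but the logical route is identical.
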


\begin{proof}[\bf Proof]
Since $L_{i_k j_k}=L_k$ for each $k$, we have $\al^{i_k j_k}_k=1$ and $\al^{i_k j_k}_l=0$ for $l\neq k$. Including this in the formula $A_{k}=\sum_{i=1}^n \sum_{j=1}^q \ov{\al}_k^{ij} \cE_{ij}^{(n,q)}$ gives the formula for $A_k$.

Finally, making use of $\eqref{UnitVectId1}$ and the fact that the vectorization operator is linear which means it preserves linear combinations, yields the formula for $\whatA$.
\end{proof}

Note that in the formula for $\whatA$ the 'structural properties' related to the choice of $L_1,\ldots,L_m$ (contained in $E$) are separated from the additional data (in the form of the vectors $\al^{ij}$). Most of the results we obtain in this section relate to a pattern determined by the choice of the matrices $L_1,\ldots,L_m$, that is, on the matrix $E$ in \eqref{Emat}. However, often there are many ways to choose the linearly independent matrices $L_1,\ldots,L_m$. Hence the interpretation of these results is that they hold for linear maps $\cL$ so that a selection of $L_1,\ldots,L_m$ among the block matrices in $L$ can be made which satisfies the constraints.

\begin{remark}\label{R:Reorder}
In addition to how the matrices $L_1,\ldots,L_m$ are chosen among the block entries of $L$, also the numbering of the chosen matrices has an effect on $\whatA$, to be precise, reordering the selected matrices, corresponds to multiplying $\whatA$ on the left with a permutation matrix. Less straight forward is that we can also reorder the columns and rows of $L$ in any way we want. This will effect the block matrix entries of $L$, who will all have the same reordering of rows and columns as done at the level of the block rows and columns, but for the question considered in the present paper that is not relevant. That reordering of block rows and columns is indeed allowed, provided the rows and columns of the blocks are reordered in the same way, is a direct consequence of Lemma \ref{L:Perm}.
\end{remark}

\subsection{The case where $L_{ij}=0$ for all $(i,j)\neq (i_k,j_k)$.}\label{SubS:L_0=0}

In this subsection we consider the case where all non-zero blocks in $L$ are linearly independent. In other words, we prove Theorem \ref{T:Main} with $L_0=0$. We also include several examples. Now choose $L_1,\ldots,L_m$ to be the non-zero blocks in $L$, say $L_k=L_{i_k j_k}$ for $k=1,\ldots,m$, so that $L_{ij}=0$ for all $(i,j)\neq (i_k,j_k)$ for all $k$. In this case, by Lemma \ref{form of A_k}, $\whatA=E$ with $E$ the matrix in \eqref{Emat}. Note that
\begin{equation}\label{E(z,x)}
E(z\otimes x) =\mat{x_{i_1}z_{j_1}\\ \vdots\\ x_{i_m}z_{j_m}},\quad x\in\BF^n,\, z\in\BF^q.
\end{equation}

The next proposition characterizes when $\fY_{\whatA}=\BF^m$ and provides a proof of  Theorem \ref{T:Main} in the case considered in this subsection.

\begin{proposition}\label{P:ZeroPattern}
Let $\cL$ as in \eqref{cL} be $*$-linear. Define $L$ as in \eqref{Matricization} and $\BL$ as in \eqref{Choi} and set $m=\rank \, \BL$. Let $L_1,\ldots,L_m$ be a selection of $m$ linearly independent matrices among the blocks $L_{ij}$ of $L$, say $L_k=L_{i_kj_k}$ for $k=1,\ldots,m$. Assume $L_{ij}=0$ for all $i,j$ with $(i,j)\neq (i_k,j_k)$ for $k=1,\ldots,m$. Then $\fY_{\whatA}=\BF^m$ holds if and only if
the following condition holds:
\begin{equation}\label{C1}
\mbox{(C1)\ \  For each $k$ we have $j_l\neq j_k$ for each $l$ or $i_l\neq i_k$ for each $l$.}
\end{equation}
In particular, in case $\cL$ as in \eqref{cL} is a positive $*$-linear map such that its matricization $L$ has the above structure, then $\cL$ is also completely positive.
\end{proposition}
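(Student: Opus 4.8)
The plan is to turn the statement into a question about a bilinear map between coordinate spaces and then settle it by the combinatorics of the positions $(i_k,j_k)$. Under the standing hypothesis $L_{ij}=0$ for all $(i,j)\neq(i_k,j_k)$, linear independence of $L_1,\ldots,L_m$ forces $\al^{ij}_k=0$ for every remaining position $(i,j)$, so Lemma \ref{form of A_k} gives $\whatA=E$, and hence by \eqref{E(z,x)}
\[
\fY_{\whatA}=\big\{(x_{i_1}z_{j_1},\ldots,x_{i_m}z_{j_m}) : x\in\BF^n,\ z\in\BF^q\big\}.
\]
Thus everything reduces to deciding when the bilinear map $(x,z)\mapsto(x_{i_k}z_{j_k})_{k=1}^m$ is onto $\BF^m$, and once the equivalence with (C1) is in hand the final ``in particular'' sentence follows at once from Corollary \ref{C:fYwhatA=Fm}. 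It helps to reformulate (C1): form the bipartite graph on the row vertices $\{1,\dots,n\}$ and column vertices $\{1,\dots,q\}$ with one edge $\{i_k,j_k\}$ for each $k$. Then (C1) says precisely that every edge has an endpoint of degree one, which in turn is equivalent to this graph being a disjoint union of stars (a vertex of degree $\geq 2$ can only be adjacent to leaves, so its component is the star it centres).

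For the necessity of (C1), suppose it fails: some $k$ admits $l\neq k$ with $j_l=j_k$ and $l'\neq k$ with $i_{l'}=i_k$; since the positions $(i_k,j_k)$ are pairwise distinct this forces $i_l\neq i_k$, $j_{l'}\neq j_k$, and $k,l,l'$ pairwise distinct. For any $x,z$, writing $w=E(z\otimes x)$, we have $w_l=x_{i_l}z_{j_k}$, $w_{l'}=x_{i_k}z_{j_{l'}}$ and $w_k=x_{i_k}z_{j_k}$; hence $w_l\neq 0$ and $w_{l'}\neq 0$ together force $z_{j_k}\neq 0$ and $x_{i_k}\neq 0$, so $w_k\neq 0$. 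Consequently the vector with $w_k=0$, $w_l=w_{l'}=1$ and all remaining entries $0$ does not lie in $\fY_{\whatA}$, so $\fY_{\whatA}\neq\BF^m$. (This argument is insensitive to whether $\BF=\BR$ or $\BF=\BC$.)

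For the sufficiency, assume (C1) and let $w\in\BF^m$ be arbitrary; I would exhibit $x,z$ with $x_{i_k}z_{j_k}=w_k$ for all $k$. Call a row index \emph{private} if it equals $i_k$ for exactly one $k$, and similarly for columns; by (C1) each $k$ has a private row or a private column. If a row $i$ is shared (equals $i_k$ for $\geq 2$ values of $k$, which by (C1) forces every such $j_k$ to be private) put $x_i=1$ and $z_{j_k}=w_k$ for those $k$; symmetrically, for a shared column $j$ put $z_j=1$ and $x_{i_k}=w_k$; for every remaining $k$ (private row and private column) put $x_{i_k}=w_k$, $z_{j_k}=1$; set all coordinates of $x,z$ not yet assigned equal to $0$. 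One checks that this prescription is consistent --- a coordinate $x_i$ is touched by the shared-row rule at $i$, or by exactly one of the other two rules, but never by two of them, since (C1) is exactly what forbids a private index from also being shared --- and then $x_{i_k}z_{j_k}=w_k$ holds in each of the three cases by inspection. (In the graph picture this is simply solving the equations one star at a time.) Hence $\fY_{\whatA}=\BF^m$.

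No step here is deep; the substance is the structural reformulation of (C1) as ``disjoint union of stars'', and the only genuinely fiddly point I expect is the consistency check in the sufficiency direction, which is pure bookkeeping once that reformulation is available. Finally, combining the equivalence with Corollary \ref{C:fYwhatA=Fm} yields the closing assertion: if the matricization of a positive $*$-linear $\cL$ has the stated zero pattern with (C1) holding, then $\fY_{\whatA}=\BF^m$ and $\cL$ is completely positive.
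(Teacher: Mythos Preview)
Your proof is correct and follows essentially the same route as the paper: reduce to $\whatA=E$ via Lemma \ref{form of A_k}, use the same contrapositive counterexample for necessity, and for sufficiency assign $x_i,z_j$ according to which side of (C1) applies at each $k$. Your bipartite-graph reformulation of (C1) as ``disjoint union of stars'' is a pleasant conceptual addition not in the paper, but the resulting case split (shared row / shared column / both private) is just a repackaging of the paper's two cases, with the consistency check carried out the same way.
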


The result follows directly from the following lemma, since $\whatA=E$, which will also be of use in the sequel.

\begin{lemma}\label{L:fE1=Fm}
For the matrix $E$ in \eqref{Emat} associated with a selection $L_1,\ldots,L_m$ among the block entries of $L$ we have
\begin{equation}\label{fE1}
\fE_1:=\{E(z \otimes x): z \in \BF^q, \, x \in \BF^n \}=\BF^m
\end{equation}
if and only if condition (C1) in \eqref{C1} holds.
\end{lemma}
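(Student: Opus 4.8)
The plan is to prove the two implications separately: the ``only if'' part by exhibiting an explicit vector lying outside $\fE_1$, and the ``if'' part by an explicit construction of suitable $z,x$ that exploits the combinatorial structure encoded by (C1). Throughout I will use the formula $E(z\otimes x)=(x_{i_1}z_{j_1},\dots,x_{i_m}z_{j_m})^T$ from \eqref{E(z,x)}, together with the fact that the pairs $(i_1,j_1),\dots,(i_m,j_m)$ are pairwise distinct.

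For ``$\fE_1=\BF^m\Rightarrow$ (C1)'' I will argue by contraposition. If (C1) fails at some index $k$, then there are $l\neq k$ with $i_l=i_k$ and $l'\neq k$ with $j_{l'}=j_k$, and moreover $l\neq l'$ (otherwise $(i_l,j_l)=(i_k,j_k)$, contradicting distinctness). I then take the vector $w\in\BF^m$ with $w_k=0$, $w_l=w_{l'}=1$ and all remaining entries $0$, and show $w\notin\fE_1$: if $w=E(z\otimes x)$ then $x_{i_k}z_{j_k}=w_k=0$, hence $x_{i_k}=0$ (which forces $w_l=x_{i_k}z_{j_l}=0$) or $z_{j_k}=0$ (which forces $w_{l'}=x_{i_{l'}}z_{j_k}=0$), contradicting $w_l=w_{l'}=1$ in either case.

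For ``(C1) $\Rightarrow \fE_1=\BF^m$'' the key observation is that (C1) says precisely that, in the bipartite incidence graph on the $n$ rows and $q$ columns with one edge per pair $(i_k,j_k)$, every edge meets a vertex of degree one; equivalently, that graph is a disjoint union of stars. Concretely I will split $\{1,\dots,m\}=C\sqcup R$, where $k\in C$ iff $j_l\neq j_k$ for all $l\neq k$ (column $j_k$ is ``private''), and $R$ is the complement, for which (C1) forces the row $i_k$ to be private. A short check then shows $\{i_k:k\in R\}$ has distinct members and is disjoint from $\{i_k:k\in C\}$, and dually $\{j_k:k\in C\}$ has distinct members and is disjoint from $\{j_k:k\in R\}$. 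Given any target $w\in\BF^m$, I set $x_i:=1$ for $i\in\{i_k:k\in C\}$, $x_{i_k}:=w_k$ for $k\in R$, $z_{j_k}:=w_k$ for $k\in C$, $z_j:=1$ for $j\in\{j_k:k\in R\}$, and all remaining (unused) coordinates of $x$ and $z$ equal to $0$; the disjointness statements make these assignments well defined, and one checks $x_{i_k}z_{j_k}=1\cdot w_k=w_k$ for $k\in C$ and $x_{i_k}z_{j_k}=w_k\cdot 1=w_k$ for $k\in R$, so $E(z\otimes x)=w$. Since the construction uses no division, it is valid verbatim for $\BF=\BR$ and $\BF=\BC$.

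I expect the ``if'' direction to be the only genuine difficulty, and within it the handling of target vectors $w$ with zero entries. A naive induction on $m$ that peels off one pair with a private row or column breaks down, because the ``shared'' variable of the removed pair may already have been forced to $0$ by the remaining equations, leaving an unsolvable equation. Recognizing the global star structure, i.e.\ working with the partition $C\sqcup R$ rather than one pair at a time, is exactly what allows one to assign the value $1$ to every ``centre'' coordinate and the prescribed product to every ``leaf'' coordinate simultaneously and without conflict.
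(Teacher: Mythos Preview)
Your proof is correct and follows essentially the same approach as the paper's: the necessity argument is identical (the paper even chooses the same kind of vector, with $y_k=0$ and the two conflicting coordinates nonzero), and for sufficiency the paper likewise splits into ``case 1'' (private column: set $z_{j_k}=y_k$, $x_{i_k}=1$) and ``case 2'' (private row: set $z_{j_k}=1$, $x_{i_k}=y_k$), which is exactly your partition $C\sqcup R$. Your version is slightly more explicit in resolving the overlap when both cases apply (you give $C$ priority), and your bipartite-graph/star interpretation and the remark on the failure of naive induction are nice additions not in the paper, but the underlying construction is the same.
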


\begin{proof}[\bf Proof]
We first prove the necessity of (C1). Assume (C1) does not hold. We show that $\fE_1\neq \BF^m$. Since (C1) does not hold there exists a $k$ and $l_1,l_2\neq k$ so that $i_k=i_{l_1}$ and $j_k=j_{l_2}$ (note that $l_1=l_2$ can occur). Now take $y\in\BF^m$ with $y_k=0$ but $y_{l_1}$ and $y_{l_2}$ are non-zero. If $y$ were of the form $y=E (z \otimes x)$ for $z \in \BF^q$ and $x \in \BF^n$, then $x_{i_k}z_{j_k}=y_k=0$ implies either $x_{i_k}=0$ or $z_{j_k}=0$, but then also $y_{l_1}=x_{i_k}z_{j_{l_1}}=0$ or $y_{l_2}=x_{i_{l_2}}z_{j_k}=0$, in contradiction with our choice of $y$. Thus $\fE_1\neq \BF^m$ cannot occur if (C1) does not hold.

Now we turn to the sufficiency of (C1). Hence, assume (C1) holds. Take $y\in\BF^m$. For each $k$, when $j_l\neq j_k$ for each $l$ (case 1), take $z_{j_k}=y_k$ and $x_{i_k}=1$, while in case $i_l\neq i_k$ for each $l$ (case 2), take $z_{j_k}=1$ and $x_{i_k}=y_k$. Set all unspecified entries of $x\in\BF^n$ and $z\in\BF^q$ equal to zero. The requirements (case 1 or case 2) guarantee that the entries of $x$ and $z$ are only given one value. Indeed, assume for instance that $i_k=i_l$ for some $k\neq l$ (this is how $x_{i_k}$ could get two values). This means for both $k$ and $l$ that case 2 is not possible, hence case 1 applies, which yields $x_{i_k}=1=x_{i_l}$. A similar arguments holds when $j_k=j_l$.  Moreover, it is clear that $x_{i_k}z_{j_k}=y_k$ for each $k$. Hence $y=E(z \otimes x)$. It follows that $\fE_1=\BF^m$.
\end{proof}

Already for $n=q=2$ two cases are excluded by (C1) in the setting considered in this subsection. These are discussed in the next two examples. The first shows that $\fY_{\whatA}=\BF^m$ is not a necessary condition for a positive map to be completely positive in general.

\begin{example}\label{E:2x2upper}
Consider the case where $L$ is a $2 \times 2$ upper triangular block matrix
\[
L=\mat{L_1&L_2\\0&L_3} = \mat{a_1&b_1&a_2&b_2\\0&c_1&0&c_2\\0&0&a_3&b_3\\0&0&0&c_3} \in\BF^{4 \times 4}.
\]
The cases with $L$ a $2 \times 2$ block matrix with $m=3$ and the zero block in another position can be treated in a similar way, leading to the same conclusion. The zeros in the left-lower entries of $L_i$ follow because we want the associated map $\cL$ to be $*$-linear, but this requirement implies more. Indeed, we have
\[
\BL=\mat{a_1&0&a_2&a_3\\0&0&0&0\\b_1&0&b_2&b_3\\c_1&0&c_2&c_3},
\]
and $*$-linearity is equivalent to $\BL=\BL^*$, hence to $a_1,b_2,c_3\in\BR$ and $a_2=\ov{b}_1, a_3=\ov{c}_1, b_3=\ov{c}_2$. Thus, assuming $*$-linearity, we have
\[
L=\mat{a_1&b_1&\ov{b}_1&b_2\\0&c_1&0&c_2\\0&0&\ov{c}_1&\ov{c}_2\\0&0&0&c_3} \ands
\BL=\mat{a_1&0&\ov{b}_1&\ov{c}_1\\0&0&0&0\\b_1&0&b_2&\ov{c}_2\\c_1&0&c_2&c_3}
\]
with $a_1,b_2,c_3\in\BR$. In this case we have
\[
\whatA=E=\mat{1&0&0&0\\0&0&1&0\\0&0&0&1} \ands \whatA(z\otimes x)=\mat{x_1z_1\\x_1z_2\\x_2z_2},\quad x,z\in\BF^2.
\]
Confirming the result of Proposition \ref{P:ZeroPattern}, it follows that
\[
\fY_{\whatA}=\{y\in\BF^3 \colon y_2=0\ \Rightarrow\ y_1 y_2=0  \} \neq \BF^3.
\]
Despite the fact that $\fY_{\whatA}\neq \BF^3$, we claim that positivity of $\cL$ implies complete positivity, as we will prove now. Note that the positivity of $\cL$ corresponds to
\begin{equation}\label{PosConEx}
(I_2 \otimes x)^* \BL (I_2 \otimes x)=\mat{a_1|x_1|^2 & \ov{x}_1\left(x_1 \ov{b}_1 + x_2 \ov{c}_1\right) \\ x_1\left(\ov{x}_1 b_1 + \ov{x}_2 c_1\right) & x^* \sbm{b_2 & \ov{c}_2\\ c_2& c_3}x}\geq0,\quad x\in\BF^2,
\end{equation}
in particular, that $\sbm{b_2 & \ov{c}_2\\ c_2& c_3}\geq 0$. Assume $\cL$ is positive. We show $\cL$ is also completely positive. Clearly \eqref{PosConEx} implies $a_1\geq 0$.

Consider the case where $a_1=0$. Then \eqref{PosConEx} implies $\ov{x}_1\left(x_1 \ov{b}_1 + x_2 \ov{c}_1\right)=0$ for all $x_1,x_2\in\BF$, hence $b_1=c_1=0$. In that case $\BL\geq 0$ corresponds to $\sbm{b_2 & \ov{c}_2\\ c_2& c_3}\geq 0$, which we already established. Hence, for the remainder we may assume $a_1>0$.

Note that for $x$ with $x_1=0$ \eqref{PosConEx} only tells us that $c_3\geq0$, which we already know. Thus only if $x_1\neq 0$ can we get new information. In that case, without loss of generality we can take $x_1=1$ so that $x$ has the form $x=\sbm{1\\ \de}$ and \eqref{PosConEx} reduces to
\[
\mat{a_1 & \ov{b}_1 + \de \ov{c}_1 \\ b_1 + \ov{\de} c_1 & b_1+\de\ov{c}_2 + \ov{\de} c_2 + |\de|^2 c_3}\geq 0.
\]
Since $a_1>0$, this is the same as positivity of the Schur complement with respect to $a_1$, that is
\begin{align}
0&\leq a_1\left(b_1+\de\ov{c}_2 + \ov{\de} c_2 + |\de|^2 c_3\right)-\left(\ov{b}_1 + \de \ov{c}_1\right)\left( b_1 + \ov{\de} c_1\right) \notag\\
&= \left(a_1 b_1 -|b_1|^2\right)+\de\left(a_1 \ov{c}_2- \ov{c}_1 b_1\right) + \ov{\de}\left(a_1 c_2- c_1 \ov{b}_1\right) + |\de|^2\left(a_1c_3- |c_1|^2\right) \label{ineqEx}\\
&= \mat{1\\ \de}^* \mat{a_1 b_1 -|b_1|^2 & a_1 \ov{c}_2- \ov{c}_1 b_1 \\ \ov{a}_1 c_2- c_1 \ov{b}_1 &  a_1c_3- |c_1|^2} \mat{1\\ \de}.\notag
\end{align}
By scaling we obtain that
\[
x^* \mat{a_1 b_1 -|b_1|^2 & a_1 \ov{c}_2- \ov{c}_1 b_1 \\ \ov{a}_1 c_2- c_1 \ov{b}_1 &  a_1c_3- |c_1|^2} x \geq 0
\]
for all $x\in\BF^2$ with $x_1\neq 0$, while for $x_1=0$ (equivalently $a_1c_3- |c_1|^2 \geq 0$) this follows by taking $|\de|$ large enough in \eqref{ineqEx}. Hence, we conclude that
\[
\mat{a_1 b_1 -|b_1|^2 & a_1 \ov{c}_2- \ov{c}_1 b_1 \\ \ov{a}_1 c_2- c_1 \ov{b}_1 &  a_1c_3- |c_1|^2} \geq 0, \mbox{ so that } \mat{0&0&0\\ 0& a_1 b_1 -|b_1|^2 & a_1 \ov{c}_2- \ov{c}_1 b_1 \\ 0 & \ov{a}_1 c_2- c_1 \ov{b}_1 &  a_1c_3- |c_1|^2} \geq 0.
\]
The latter shows precisely that the Schur complement of $\BL$ with respect to $a_1$ is positive semidefinite, so that we can conclude that $\BL\geq 0$, or equivalently, that $\cL$ is completely positive, as claimed.
\end{example}

\begin{example}\label{E:2x2fullblock}
Take
\begin{equation*}
L=\begin{bmatrix} L_1 & L_2 \\ L_3 & L_4  \end{bmatrix}=\begin{bmatrix} \mathcal{E}_{11}^{(2)} & \mathcal{E}_{21}^{(2)} \\ \mathcal{E}_{12}^{(2)} & \mathcal{E}_{22}^{(2)}  \end{bmatrix}
=\mat{1&0&0&0\\0&0&1&0\\0&1&0&0\\0&0&0&1} \in \BF^{4 \times 4}.
\end{equation*}
Thus $n=q=2$ and $m=4$. In this case $\BL=\BH=L=L^{-1}.$ In particular, the Choi matrix $\BL$ is Hermitian, but not positive semidefinite, and hence the associated linear map $\cL$ is $*$-linear, but not completely positive. In fact, we have
\begin{equation*}
\cL(K)=\sum_{i,j=1}^2 k_{ij}\BL_{ij}=\begin{bmatrix} k_{11} & k_{21} \\ k_{12} & k_{22} \end{bmatrix}=K^T, \quad K=\begin{bmatrix} k_{11} & k_{12} \\ k_{21} & k_{22} \end{bmatrix} \in \BF^{2 \times 2}.
\end{equation*}
Hence, this choice of $L$ leads to the transpose map, which is one of the best known examples of a positive linear map which is not completely positive.

In this case we have $\whatA=E=L$, since $L_1,\ldots,L_4$ are linearly independent, and
\[
\fY_{\whatA}=\left\{\sbm{x_1z_1 \\ x_1z_2 \\ x_2z_1 \\ x_2z_2}: \, x_i,z_i \in \BF, \, i=1,2 \right\}.
\]
It is easily checked that $\mat{0&1&1&0}^T \not\in\fY_{\whatA}$ so that $\fY_{\whatA}\neq \BF^4$. Furthermore, for $x,z\in\BF^2$ we have
\[
(z\otimes x)^* \BL (z\otimes x) = \left(\ov{x}_1z_1+\ov{x}_2z_2\right)\left(\ov{\ov{x}_1z_1+\ov{x}_2z_2}\right) = |\ov{x}_1z_1+\ov{x}_2z_2|^2 \ge 0,
\]
verifying that $\cL$ is positive.
\end{example}

While there will be other choices of $L_1,\ldots,L_4$ where the $*$-linear map $\cL$ associated with this choice is both positive and completely positive, the example shows that the `structure' imposed by the positions of the linearly independent blocks does not guarantee it is always the case.

In fact, if this `pattern' is contained in a larger matrix $L$, then we can also construct an example where the same phenomenon occurs.

\begin{corollary}\label{C:2x2fullblock}
Let $(i_k,j_k)$, for $k=1,\ldots,m$, be a selection of distinct points in $\{1,\ldots,n\} \times \{1,\ldots,q\}$ so that there exist $k_1,\ldots,k_4$ with
\[
i_{k_1}=i_{k_2} \neq i_{k_3} = i_{k_4} \ands j_{k_1}=j_{k_3} \neq j_{k_2} = j_{k_4}.
\]
Then there exists a selection of matrices $L_1,\ldots,L_m\in\BF^{n\times q}$ so that the linear map $\cL$ with matricization $L=\left[L_{ij}\right]$ determined by $L_{i_k j_k}=L_k$, $k=1,\ldots,m$ and $L_{ij}=0$ for $(i,j)\neq(i_k,j_k)$ for all $k$ is a $*$-linear positive map that is not completely positive.
\end{corollary}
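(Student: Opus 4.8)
The plan is to choose the remaining $L_k$ so that the associated map $\cL$ is, up to coordinate (de)compressions, the transpose map of Example~\ref{E:2x2fullblock}. From the hypothesis there are rows $a:=i_{k_1}=i_{k_2}\neq i_{k_3}=i_{k_4}=:b$ and columns $c:=j_{k_1}=j_{k_3}\neq j_{k_2}=j_{k_4}=:d$, so that the four distinct positions $(a,c),(a,d),(b,c),(b,d)$ all occur among the $(i_k,j_k)$. First I would set $L_k:=0$ for every $k$ with $(i_k,j_k)\notin\{(a,c),(a,d),(b,c),(b,d)\}$ and fix the four remaining blocks through a Hill representation.

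Put $A_s:=\cE^{(n,q)}_{i_{k_s}j_{k_s}}$ for $s=1,2,3,4$; these are linearly independent rank-one matrices, with $A_1$ at corner $(a,c)$, $A_4$ at corner $(b,d)$, and $A_2,A_3$ at the corners $(a,d),(b,c)$. Let $\BH\in\BF^{4\times4}$ be the Hill matrix of the transpose map from Example~\ref{E:2x2fullblock}, that is $\BH_{11}=\BH_{44}=\BH_{23}=\BH_{32}=1$ and all other entries $0$, and define $\cL(V):=\sum_{s,t=1}^{4}\BH_{st}A_sVA_t^*$ for $V=[v_{ij}]\in\BF^{q\times q}$. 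A direct computation using $A_s=e^{(n)}_{i_{k_s}}(e^{(q)}_{j_{k_s}})^{T}$ gives
\[
\cL(V)=v_{cc}\,\cE^{(n)}_{aa}+v_{dd}\,\cE^{(n)}_{bb}+v_{dc}\,\cE^{(n)}_{ab}+v_{cd}\,\cE^{(n)}_{ba},
\]
so $\cL(V)$ is the transpose of the $\{c,d\}$-principal submatrix of $V$, placed in the $\{a,b\}$-principal block of the $n\times n$ zero matrix. Since the $A_s$ are linearly independent and $\BH$ is invertible, this is a minimal Hill representation, so by Proposition~\ref{P:HillLBL} the matricization is $L=\sum_{s,t=1}^{4}\BH_{st}\,\ov{A}_s\otimes A_t$; because every $\ov{A}_s=\cE^{(n,q)}_{i_{k_s}j_{k_s}}$ forces all blocks of $\ov{A}_s\otimes A_t$ to vanish outside position $(i_{k_s},j_{k_s})$, the block matrix $L$ has nonzero blocks only at the four positions $(a,c),(a,d),(b,c),(b,d)$. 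Setting $L_k:=L_{i_kj_k}$ for $k=1,\dots,m$ (which at the non-special indices is consistent with $L_k=0$) then realises exactly the block layout in the statement, with $\cL$ the corresponding map.

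It then remains to check the three asserted properties. Since $\BH$ is Hermitian and the representation is minimal, $\cL$ is $*$-linear by Theorem~\ref{T:Hill} (this is also visible from the displayed formula). For positivity: if $V\ge0$ then its $\{c,d\}$-principal submatrix is positive semidefinite, hence so is its transpose, hence so is $\cL(V)$, which is precisely that $2\times2$ positive semidefinite matrix padded with zeros. For the failure of complete positivity: by Proposition~\ref{P:HillLBL} the Choi matrix is $\BL=\whatA^{*}\BH^{T}\whatA$ with $\whatA$ of full row rank, so $\BL\ge0$ would force $\BH\ge0$; but the principal submatrix of $\BH$ on rows/columns $2,3$ has eigenvalues $\pm1$, so $\BH\not\ge0$, hence $\BL\not\ge0$, and $\cL$ is not completely positive by the Choi criterion. (One can equally exhibit the witness $u:=e^{(q)}_{c}\otimes e^{(n)}_{b}-e^{(q)}_{d}\otimes e^{(n)}_{a}$, for which $u^{*}\BL u=-2$.)

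Nothing here is deep: the construction is just the transpose map of Example~\ref{E:2x2fullblock} conjugated by the compression $\BF^{q\times q}\to\BF^{2\times2}$ onto rows/columns $c,d$ and the embedding $\BF^{2\times2}\to\BF^{n\times n}$ onto rows/columns $a,b$, operations that preserve $*$-linearity and positivity but cannot produce a completely positive map from a non-completely-positive one. I expect the only point needing care to be the index bookkeeping — lining up the ordering of $A_1,\dots,A_4$ with the ``rectangle'' of the four positions so that the Hill matrix of Example~\ref{E:2x2fullblock} does in fact reproduce the transpose map between the relevant coordinate subspaces — together with the routine verification that the matricization has no nonzero blocks outside those four positions.
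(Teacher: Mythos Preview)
Your proof is correct, but it takes a different route from the paper's. You set $L_k=0$ for all indices other than the four ``rectangle'' positions, so the Choi matrix of your $\cL$ has rank $4$ regardless of $m$; the map is literally the $2\times 2$ transpose of Example~\ref{E:2x2fullblock} compressed on the domain side to columns $\{c,d\}$ and embedded on the range side into rows $\{a,b\}$. Your verification of $*$-linearity, positivity, and non-complete-positivity via the Hill matrix is clean and accurate, and your block computation of the matricization is correct (each $\ov{A}_s\otimes A_t$ indeed has its only nonzero block at position $(i_{k_s},j_{k_s})$).

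The paper instead builds an example with $\rank\BL=m$, i.e., with \emph{all} $m$ blocks $L_{k}$ linearly independent. It does this by taking the block-diagonal Hill matrix $\BH=\sbm{\BH_0&0\\0&\BH_1}$ with $\BH_0$ the $4\times 4$ matrix of Example~\ref{E:2x2fullblock} and $\BH_1$ an arbitrary positive definite $(m-4)\times(m-4)$ matrix, and setting $\BL=\whatA^*\BH^T\whatA$ with $\whatA=E$. The point is that $\BL$ then splits as $\BL_0+\BL_1$ with $\BL_1\ge 0$, so positivity reduces to the same $4\times 4$ computation as in Example~\ref{E:2x2fullblock}, while the positive definite $\BH_1$ ensures the full rank $m$. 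This is more in keeping with the ambient Subsection~\ref{SubS:L_0=0}, where $m$ always denotes $\rank\BL$ and the nonzero blocks form a linearly independent set; the paper's example genuinely uses all $m$ prescribed positions, whereas in yours the positions outside the rectangle are dummies. As a proof of the corollary exactly as stated, though, your construction is perfectly valid and pleasantly shorter.
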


\begin{proof}[\bf Proof]
The result is proved by embedding Example \ref{E:2x2fullblock} into a larger matricization $L$. Possibly after relabeling $L_1,\ldots,L_m$ and reordering block rows and columns in $L$ (see Remark \ref{R:Reorder}) we can arrange to have $k_l=l$ for $l=1,\ldots,4$ and $i_1<i_3$, $j_1<j_2$. Since we will construct a case where $L_{ij}=0$ for $(i,j)\neq(i_k,j_k)$ for all $k$, we have that $\whatA=E$ so that $\whatA$ depends only on the choice of the pairs $(i_k,j_k)$. Moreover, $\whatA$ has the form
\begin{align*}
&\whatA=\mat{\whatA_0\\\whatA_1}=\mat{0&e^{(4)}_1&0&e^{(4)}_3&0&e^{(4)}_2&0&e^{(4)}_4&0\\ \whatA_{11}&0&\whatA_{12}&0&\whatA_{13}&0&\whatA_{14}&0&\whatA_{15}} \mbox{ with}\\
&\hspace*{2cm} \whatA_{11}\in\BF^{(m-4)\times ((j_1-1)n+ i_1-1) },\
\whatA_{15}\in\BF^{(m-4)\times ((q-j_2)n+n-i_3) },\\
&\whatA_{12}\in\BF^{(m-4)\times (i_3-i_1-1) },\,
\whatA_{13}\in\BF^{(m-4)\times ((j_2-j_1)n+i_1-i_3-1) },\,
\whatA_{14}\in\BF^{(m-4)\times (i_3-i_1-1) },
\end{align*}
and the $0$-s indicating zero matrices of appropriate size. For the Hill matrix take
\[
\BH=\mat{\BH_0&0\\0& \BH_1}\quad \mbox{with}\ \ \BH_0= \begin{bmatrix} \mathcal{E}_{11}^{(2)} & \mathcal{E}_{21}^{(2)} \\ \mathcal{E}_{12}^{(2)} & \mathcal{E}_{22}^{(2)}  \end{bmatrix}\in\BF^{4 \times 4},
\]
and $\BH_1\in\BF^{(m-4) \times (m-4)}$ any positive definite matrix. Then $\BL:=\whatA^* \BH^T \whatA$ has rank $m$, is selfadjoint but not positive semidefinite. Also, by construction, $L$ has the required structure, which follows from the relation between $\BL$ and $L$ explained in the introduction; see also \cite[Proposition 4.1 \& Section 3]{tHN2}. It follows that the linear map $\cL$ associated with $\BL$ and $L$ is $*$-linear but not completely positive. To see that $\cL$ is positive, note that
$\BL= \whatA_0^* \BH_0^T \whatA_0 + \whatA_1^* \BH_1^T \whatA_1$ with $\BL_1:=\whatA_1^* \BH_1^T \whatA_1$ positive semidefinite and $\BL_0:=\whatA_0^* \BH_0^T \whatA_0$ the matrix $\BL$ in Example \ref{E:2x2fullblock} with zero-rows and zero-columns added. Hence, if $(z \otimes x)^*\BL_0(z\otimes x)\geq 0$ for all $z ,x$, then $\cL$ is positive. Note that $\whatA_0(z \otimes x) = \mat{x_{i_1}z_{j_1} & x_{i_1}z_{j_2} & x_{i_3}z_{j_1} & x_{i_3}z_{j_2}}^T$, hence the argument used in Example \ref{E:2x2fullblock} also applies here, and it follows that $\cL$ is positive indeed.
\end{proof}

While Example \ref{E:2x2upper} shows that for $2 \times 2$ block upper triangular $L$ positivity of the associated $*$-linear map $\cL$ implies complete positivity, Corollary \ref{C:2x2fullblock} implies that for block upper triangular $L$ of larger size this is not the case.

\begin{example}
For $n=q>2$ it follows that there are matrices $L_{ij}\in\BF^{n \times n}$, for $i\leq j$, so that the linear map $\cL$ with  matricization $L$ given by the block upper triangular matrix
\[
L=\mat{L_{11}&\cdots&\cdots&L_{1n}\\ 0&\ddots&&\vdots\\ \vdots&\ddots&\ddots&\vdots\\ 0&\cdots&0&L_{nn}},
\]
where the non-zero blocks $L_{ij}$, $i\leq j$, are linearly independent, is positive but not completely positive. Indeed, for $n=q>2$ the right upper $2 \times 2$ corner forms a $2 \times 2$ block of linearly independent matrices.
\end{example}

\subsection{The case where $L_{ij}=L_0$ for all $(i,j)\neq (i_k,j_k)$:\ General setting.}

Now we start with the general case of Theorem \ref{T:Main}, i.e., where $L_0$ need not be the zero matrix. Set
\begin{equation}\label{fS}
\fS:=\tu{span}\{ L_{ij} \colon i=1,\ldots,n, \, j=1,...,q \}.
\end{equation}
Hence, we are assuming here that there exists a single matrix $L_0\in \fS$ so that $L_{ij}=L_0$ for all $(i,j)\neq (i_k,j_k)$ for $k=1,\ldots,m$. Although the case $L_0=0$ is covered in the previous subsection, we do allow this as a possibility here too. Since $\fS$ coincides with the span of $L_1,\ldots,L_m$, we can write $L_0=\sum_{k=1}^m \al_k L_k$ for $\al_1,\ldots,\al_m\in\BF$, which are uniquely determined by $L_0$, by the linear independence assumption. Note that in the case we consider here $\al_k^{ij}=\al_k$ for $(i,j)\neq (i_k,j_k)$ for all $k$ and it thus follows from Lemma \ref{form of A_k} that
\[
\whatA= E+ \al v^T
\]
with
\begin{align*}
E:=
\begin{bmatrix}
e_{\left(j_1-1\right)n+i_1}^{(nq)^T}\\ \vdots \\ e_{\left(j_m-1\right)n+i_m}^{(nq)^T}
\end{bmatrix}\in \BF^{m \times nq},\quad
\alpha:=\begin{bmatrix}\alpha_1 \\ \vdots \\ \alpha_m  \end{bmatrix} \in \BF^m,\\
\quad \text{and} \quad
v:= \OneVec_{nq}-\sum_{l=1}^m e_{\left(j_l-1\right)n+i_l}^{(nq)}= \OneVec_{nq}-E^T \OneVec_{m} \in \BF^{nq}.
\end{align*}
The latter formula for $v$ shows that $\whatA$ can also be written as
\begin{equation}\label{whatAforms1}
\whatA=E + \al \left(\OneVec_{nq}^T- \OneVec_{m}^T E\right) =\left(I_m-\al \OneVec_{m}^T\right) E + \al\OneVec_{nq}^T.
\end{equation}
%Note further that
%\begin{equation}\label{E(z,x)}
%E (z \otimes x)=\mat{x_{i_1} z_{j_1}\\ \vdots \\ x_{i_m} z_{j_m}}.
%\end{equation}
In view of the second formula for $\whatA$ in \eqref{whatAforms1} we collect here some properties of the matrix $I_m-\al \OneVec_{m}^T$.

\begin{lemma}\label{L:Ipert}
The matrix $I_m-\al \OneVec_{m}^T$ is invertible if and only if $\sum_{k=1}^m \al_k\neq 1$, and in that case $\left(I_m-\al \OneVec_{m}^T\right)^{-1}=I_m +\left(1-\sum_{k=1}^m\alpha_k\right)^{-1} \alpha\OneVec_m^T$. In case $\sum_{k=1}^m \al_k= 1$ we have
\[
\kr I_m-\al \OneVec_{m}^T=\tu{span}\{\al\} \ands
\left(\im I_m-\al \OneVec_{m}^T\right)^\perp=\tu{span}\left\{\OneVec_{m}\right\}.
\]
\end{lemma}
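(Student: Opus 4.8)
The plan is to recognize $I_m-\al\OneVec_m^T$ as a rank-one perturbation of the identity and handle it by (a version of) the Sherman--Morrison formula. Set $s:=\OneVec_m^T\al=\sum_{k=1}^m\al_k$. First I would record the one-line identity
\[
\left(I_m-\al\OneVec_m^T\right)\left(I_m+c\,\al\OneVec_m^T\right)=I_m+(c-1-cs)\,\al\OneVec_m^T\qquad(c\in\BF),
\]
which follows from $\OneVec_m^T\al=s$ and which also holds with the two factors swapped. When $s\ne 1$, taking $c=(1-s)^{-1}$ collapses the right-hand side to $I_m$, giving simultaneously the invertibility of $I_m-\al\OneVec_m^T$ and the asserted formula for its inverse.

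For the converse and the singular case I would first observe that $s=1$ forces $\al\ne 0$ (otherwise $s=0\ne1$), and then compute $\left(I_m-\al\OneVec_m^T\right)\al=(1-s)\al=0$, so $\al$ is a nonzero kernel vector; this shows $I_m-\al\OneVec_m^T$ is not invertible, which completes the proof of the equivalence. To pin down the kernel exactly, I would use that $\al\OneVec_m^T$ has rank at most $1$, hence $\rank\left(I_m-\al\OneVec_m^T\right)\ge m-1$ and $\dim\kr\left(I_m-\al\OneVec_m^T\right)\le1$; combined with $0\ne\al\in\kr\left(I_m-\al\OneVec_m^T\right)$ this yields $\kr\left(I_m-\al\OneVec_m^T\right)=\tu{span}\{\al\}$.

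For the statement about the range I would pass to the adjoint via $\left(\im M\right)^\perp=\kr M^*$ with $M=I_m-\al\OneVec_m^T$, so that $M^*=I_m-\OneVec_m\al^*$. Since in this case $s=1$ is a real scalar we have $\al^*\OneVec_m=\ov s=1$, whence $M^*\OneVec_m=\OneVec_m-\OneVec_m(\al^*\OneVec_m)=0$; thus $\OneVec_m$ is a nonzero vector in $\kr M^*$, and the same rank-one perturbation bound ($\dim\kr M^*\le1$) forces $\left(\im\left(I_m-\al\OneVec_m^T\right)\right)^\perp=\tu{span}\{\OneVec_m\}$.

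There is essentially no genuine obstacle here; the only point requiring a little care is the bookkeeping with transposes versus adjoints, so that the argument is valid verbatim for both $\BF=\BR$ and $\BF=\BC$ — in particular the observation that $s=1$ is automatically real is what makes the range computation go through over $\BC$.
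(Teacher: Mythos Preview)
Your proof is correct and follows essentially the same approach as the paper: both recognize $I_m-\al\OneVec_m^T$ as a rank-one perturbation of the identity, invoke (or, in your case, directly verify) the Sherman--Morrison formula for the invertibility claim and inverse formula, and then use the rank bound for a rank-one perturbation to pin down the one-dimensional kernel and cokernel in the singular case. Your version is a bit more self-contained (you write out the product identity explicitly rather than citing the formula, and you spell out the rank-subadditivity argument and the $\BR$/$\BC$ bookkeeping for the adjoint), but the underlying ideas are identical.
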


\begin{proof}[\bf Proof]
The invertibility criteria and formula for the inverse follow directly from the Sherman–Morrison formula \cite{H89}. In case $\sum_{k=1}^m \al_k = 1$, since $I_m-\al \OneVec_{m}^T$ is a rank 1 perturbation of an invertible matrix, the kernel and cokernel are of dimension 1. It is easily verified that $\al$ is in $\kr I_m-\al \OneVec_{m}^T$ and $\OneVec_{m}\in \kr \left(I_m-\al \OneVec_{m}^T\right)^*=\kr I_m-\OneVec_{m} \al^*$, which prove the formulas for the kernel and cokernel of $I_m-\al \OneVec_{m}^T$.
%Note that $I_m-\al \OneVec_{m}^T$ is the Schur complement of $\sbm{I_m & \al\\ \OneVec_{m}^T & 1}$ with respect to the right lower entry. From this it follows that $I_m-\al \OneVec_{m}^T$ is invertible if and only if the Schur complement with respect to the left upper entry is invertible. The second Schur complement is equal to $1- \OneVec_{m}^T I_m^{-1} \al=1-\OneVec_{m}^T \al=1-\sum_{k=1}^m \al_k$. This proves the first claim, which also follows directly from the Sherman–Morrison formula \cite{H89}. Furthermore, the dimensions of the kernels (and cokernels) of $I_m-\al \OneVec_{m}^T$ and $1-\sum_{k=1}^m \al_k$ coincide, which means that $I_m-\al \OneVec_{m}^T$ has at most a one-dimensional (co)kernel. In case $\sum_{k=1}^m \al_k= 1$ it is easily verified that $\al$ is in $\kr I_m-\al \OneVec_{m}^T$ while $\OneVec_{m}\in \kr \left(I_m-\al \OneVec_{m}^T\right)^*=\kr I_m-\OneVec_{m} \al^*$ (since also $\sum_{k=1}^m \ov{\al}_k=1$). This proves the formulas for the kernel and cokernel of $I_m-\al \OneVec_{m}^T$.
\end{proof}

In the next two subsections we shall provide a proof for Theorem \ref{T:Main} in the case where $\sum_{k=1}^m \al_k\neq 1$ and the case where $\sum_{k=1}^m \al_k= 1$, respectively.

\subsection{The case where $L_{ij}=L_0$ for all $(i,j)\neq (i_k,j_k)$ and $\sum_{k=1}^m \al_k\neq 1$.}

In view of the formula for $\what{A}$ given in \eqref{whatAforms1}, when $I_m-\al \OneVec_{m}^T$ is invertible, it turns out useful to consider the subset of $\fE_1$ in \eqref{fE1} given by
\begin{equation}\label{fE2}
\fE_2:=\{E(z \otimes x): z \in \BF^q, \, x \in \BF^n,\, \OneVec_{nq}^T (z \otimes x)=0 \}.
\end{equation}
Note that
\begin{equation}\label{OneVecCon}
\OneVec_{nq}^T (z \otimes x)=\left(\sum_{j=1}^q z_j\right) \left(\sum_{i=1}^n x_i \right).
\end{equation}
Hence $\OneVec_{nq}^T (z \otimes x)=0$ holds if and only if $\sum_{j=1}^q z_j=0$ or $\sum_{i=1}^n x_i=0$. We now characterise when $\fE_2$ is equal to $\BF^m$.

\begin{lemma}\label{L:fE2=Fm}
We have $\fE_2=\BF^m$ if and only if (C1) in \eqref{C1} holds together with at least one of the following conditions:
\begin{itemize}
\item[(C2.1)] there exists a $i_0$ such that $ i_0 \neq i_k$ for all $k$;

\item[(C2.2)] there exists a $j_0$ such that $ j_0 \neq j_k$ for all $k$;

\item[(C2.3)] there exist distinct $k_1,k_2,k_3,k_4$ such that $i_{k_1}=i_{k_2} \neq i_{k_3} = i_{k_4}$;

\item[(C2.4)] there exist distinct $k_1,k_2,k_3,k_4$ such that $j_{k_1}=j_{k_2} \neq j_{k_3} = j_{k_4}$;

\item[(C2.5)]  we have $i_k\neq i_l$ for all $k\neq l$ and in case $q=1$ we must have $m\neq n$;

\item[(C2.6)] we have $j_k\neq j_l$ for all $k\neq l$ and in case $n=1$ we must have $m \neq q$;

\item[(C2.7)] there exists a $k$ such that $i_l \neq i_k$ and $j_l \neq j_k$ holds for all $l \neq k$ and $m>1$.

\end{itemize}
Furthermore, if (C1) holds and all seven conditions (C2.1)--(C2.7) do not hold, then, possibly rearranging the indices of $L_1,\ldots,L_m$, only one of the following three cases can occur:
\begin{itemize}
  \item[(i)] $q=1$ and $m=n$, which implies $L=\sbm{L_1\\ \vdots \\ L_n}$;

  \item[(ii)] $n=1$ and $m=q$, which implies $L=\sbm{L_1 & \cdots & L_q}$;

  \item[(iii)] $m=n+q-2$ with $n,q>2$ and there exist $1\leq r\leq n$, $1\leq s\leq q$ so that $L$ takes the form
  \[
L=\mat{
L_0&\cdots&L_0&L_q&L_0&\cdots&L_0\\
\vdots&&\vdots&\vdots&\vdots&&\vdots\\
L_0&\cdots&L_0&L_{q+r-2}&L_0&\cdots&L_0\\
L_1&\cdots&L_{s-1}&L_0&L_{s}&\cdots&L_{q-1}\\
L_0&\cdots&L_0&L_{q+r-1}&L_0&\cdots&L_0\\
\vdots&&\vdots&\vdots&\vdots&&\vdots\\
L_0&\cdots&L_0&L_{q+n-2}&L_0&\cdots&L_0
}.
\]

\end{itemize}
\end{lemma}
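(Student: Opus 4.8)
\emph{Overall plan and necessity of (C1).} I would prove the three assertions in the order: necessity of \eqref{C1}, the classification of the exceptional configurations, and then the two implications of the equivalence (invoking the classification for the harder direction). Since imposing $\OneVec_{nq}^T(z\otimes x)=0$ only shrinks the set, $\fE_2\subseteq\fE_1$, so $\fE_2=\BF^m$ forces $\fE_1=\BF^m$ and Lemma \ref{L:fE1=Fm} gives (C1). From here on I assume (C1), and it is convenient to read the hypotheses through the bipartite graph $G$ whose vertices are the $n$ rows and the $q$ columns and whose edges are the chosen positions $(i_k,j_k)$: in these terms (C1) says every edge meets a degree-$1$ vertex, so $G$ is a disjoint union of stars (row-stars, column-stars, and isolated edges). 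Then (C2.1)/(C2.2) say $G$ has an isolated row/column vertex; (C2.3)/(C2.4) say $G$ has two row/column vertices of degree $\ge2$; (C2.5)/(C2.6) say every row/column vertex has degree $\le1$ (with a degenerate exclusion); and (C2.7) says $G$ has an isolated edge (and $m>1$).

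\emph{Classification.} Assume (C1) and that none of (C2.1)--(C2.7) holds. By $\neg$(C2.1), $\neg$(C2.2) every row and every column occurs among the $(i_k,j_k)$, so $m\ge n$ and $m\ge q$; by $\neg$(C2.3), $\neg$(C2.4) at most one row index and at most one column index repeats, and counting $m=\sum_i\#\{k:i_k=i\}$ then shows that either all $i_k$ are distinct and $m=n$, or a unique row $r$ repeats with multiplicity $m-n+1\ge2$ and $m>n$; symmetrically for columns. If all $i_k$ are distinct then $\neg$(C2.5) forces $q=1$ and $m=n$, landing in case (i), and symmetrically all $j_k$ distinct gives case (ii). Otherwise a unique row $r$ and a unique column $s$ repeat and $m>n$, $m>q$; here (C1) forbids any chosen position in row $r$ from lying in column $s$, so $(r,s)$ carries $L_0$, and a bookkeeping count shows that the number of chosen positions lying in neither row $r$ nor column $s$ equals $n+q-2-m$, while each such position is an isolated edge of $G$, so $\neg$(C2.7) forces $m=n+q-2$. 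Unwinding, row $r$ carries $L_1,\dots,L_{q-1}$ (outside column $s$), column $s$ carries $L_q,\dots,L_{q+n-2}$ (outside row $r$), and everything else is $L_0$ --- case (iii), with $n,q>2$ forced by $m>n$, $m>q$. Conversely one checks that each of (i)--(iii) violates all seven conditions.

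\emph{The equivalence.} For ``$\Leftarrow$'', assume (C1) and one of (C2.1)--(C2.7); given $y\in\BF^m$, take a realization $y=E(z\otimes x)$ from Lemma \ref{L:fE1=Fm} and perturb $(x,z)$ so as to also make $\sum_i x_i=0$ or $\sum_j z_j=0$: using a free row/column to absorb the deficit for (C2.1)/(C2.2); rescaling $x$ on two repeated rows, whose attached $z$-entries are ``private'' by (C1), to prescribe $\sum_i x_i$ for (C2.3) and dually (C2.4); the abundant freedom in $x$ when all $i_k$ are distinct for (C2.5) and dually (C2.6), after first splitting off the free-row and free-column sub-cases; and for (C2.7) isolating the lonely edge $(i_k,j_k)$ and exploiting that the realization of the other $m-1$ positions can always be arranged with $\sum_{i\ne i_k}x_i\ne0$ or $\sum_{j\ne j_k}z_j\ne0$. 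For ``$\Rightarrow$'', $\fE_2=\BF^m$ gives (C1) by the first step, and if additionally no (C2.i) held then the classification would place $L$ in one of (i)--(iii); but in (i) and (ii) one computes $\fE_2=\{y:\sum_k y_k=0\}\neq\BF^m$, and in (iii), taking $y$ with all entries nonzero and with $\sum_{k=1}^{q-1}y_k=0$ and $\sum_{k=q}^{q+n-2}y_k=0$ (possible since $q-1,n-1\ge2$), an inspection via \eqref{OneVecCon} shows $y\notin\fE_2$ --- all of $x$ and $z$ must be nonzero, and then either sum condition forces a zero entry --- a contradiction.

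I expect the classification to be the crux: it is the only step that must orchestrate all seven negated conditions simultaneously, and the delicate point there is the count that rules out chosen positions outside the distinguished row and column when $m>n$ and $m>q$. The perturbation arguments for ``$\Leftarrow$'' are case-ridden but routine once $G$'s star structure is in view, and the computations for cases (i)--(iii) are short.
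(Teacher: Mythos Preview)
Your proposal is correct and follows essentially the same three-part strategy as the paper: necessity of (C1) via $\fE_2\subseteq\fE_1$, sufficiency of each (C2.$i$) by perturbing a realization from Lemma~\ref{L:fE1=Fm}, and necessity of some (C2.$i$) by classifying the exceptional configurations and exhibiting a vector outside $\fE_2$ in each of (i)--(iii).

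The one genuine difference worth noting is your bipartite-graph reformulation. The paper never makes this explicit and instead handles the classification by an ad-hoc case split ($q=1$, $n=1$, then $q=2$, $n=2$, then $n,q>2$), whereas your counting argument --- $\neg$(C2.1) and $\neg$(C2.2) force every vertex to be covered, $\neg$(C2.3) and $\neg$(C2.4) force at most one high-degree vertex on each side, and then the count $n+q-2-m$ for isolated edges pins down $m$ via $\neg$(C2.7) --- is cleaner and more transparent. Your observation that (C1) is equivalent to $G$ being a disjoint union of stars is the conceptual gain here; the paper's case-by-case perturbation arguments for (C2.3)--(C2.7) are in effect exploiting this star structure without naming it. One small caution: your (C2.7) sketch relies on the claim that the realization of the remaining $m-1$ positions can be arranged with $\sum_{i\ne i_k}x_i\ne0$ or $\sum_{j\ne j_k}z_j\ne0$, which is true but needs a short argument (each star contributes a free nonzero center variable to one of the two sums); the paper instead sets up an explicit normalized configuration with one row-star, one column-star, and the isolated edge, and modifies three coordinates directly. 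Both work, but the paper's version leaves less to the reader.
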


\begin{proof}[\bf Proof]
First we prove the sufficiency of (C1) together with one of the conditions (C2.1)--(C2.7). Hence assume (C1) holds. Then each $y\in\BF^m$ is of the form $y=E(z \otimes x)$ for some $z \in \BF^q$ and $x \in \BF^n$, since $\fE_1=\BF^m$, by Lemma \ref{L:fE1=Fm}. Now, under each of the additional assumptions (C2.1)--(C2.7) we have to show that we can in fact obtain $y=E(z \otimes x)$ with the additional constraint $\OneVec_{nq}^T (z \otimes x)=0$, or, equivalently, $\sum_{j=1}^q z_j=0$ or $\sum_{i=1}^n x_i=0$.

Assume (C2.1). Let $y\in\BF^m$ be given as $y=E(z \otimes x)$ for some $z \in \BF^q$ and $x \in \BF^n$. Let $i_0$ be such that $i_0 \neq i_k$ for all $k$. From \eqref{E(z,x)} it follows that the validity of $y=E(z \otimes x)$ holds independent of the value of $x_{i_0}$, so that $x_{i_0}$ can be modified to have $\sum_{i=1}^n x_i=0$.

Assume (C2.2). This follows by an argument similar to that for Condition (C2.1).

Assume (C2.4). Then there exist $r,s>1$ (but $r+s\leq n$) so that, possibly after rearranging  indices of $L_1,\ldots,L_m$ as well as block rows and columns in $L$ (see Remark \ref{R:Reorder}), $j_k=1$ and $i_k=k$ for $k=1,\ldots,r$ and $j_{r+l}=2$ and $i_{r+l}=r+l$ for $l=1,\ldots,s$, while $j_p\neq 1,2$ whenever $p>r+s$. Since $(C1)$ holds, also $i_p\neq 1,\ldots, r+s$ for $p>r+s$. Let $y\in\BF^m$ be given as $y=E(z \otimes x)$ for some $z \in \BF^q$ and $x \in \BF^n$. Then $y_k= x_k z_1$ for $k=1,\ldots,r$ and $y_{r+l}= x_{r+l} z_2$ for $l=1,\ldots,s$ while $z_1,z_2,x_1,\ldots,x_{r+s}$ do not occur in the factorisations $y_p=x_{i_p}z_{j_p}$ for $p>r+s$. We can now adjust $z_1$ and $z_2$ so that $z_1,z_2\neq 0$ and $\sum_{j=1}^q z_j =0$ and then redefine $x_k:=y_k/z_1$ for $k=1,\ldots,r$ and $x_{r+l}:=y_{r+l}/z_2$ for $l=1,\ldots,s$ resulting in new vectors $x$ and $z$ so that still $y=E(z \otimes x)$ while also $\sum_{j=1}^q z_j =0$.

Assume (C2.3). This follows by an argument similar to that for Condition (C2.4).

Assume (C2.5). In this case, since $i_k\neq i_l$ for all $k\neq l$ we can chose $x_{i_1},\ldots,x_{i_m}$ independent of one another. Also, we have $m\leq n$. In case $q=1$ and $m\neq n$ we have $m<n$ and thus we are in case (C2.1). Assume $q>1$. Let $y\in\BF^m$. Now choose $z_1=1-q\neq 0$ and $z_2=\cdots=z_q=1$, so that $\sum_{j=1}^q z_j=0$ and all $z_j$ are non-zero, and take $x\in\BF^n$ with $x_{i_k}=y_{k}/z_{j_k}$ for $k=1,\ldots,m$ with the other entries chosen arbitrarily. In this case we have $y=E(z \otimes x)$ while also $\OneVec_{nq}^T (z \otimes x)=0$ because $\sum_{j=1}^q z_j=0$.

Assume (C2.6). This follows by an argument similar to that for Condition (C2.5).

Assume (C2.7). Since all other cases are covered, we may assume these conditions are not satisfied, in particular, assume (C2.5) and (C2.6) do not hold. By excluding the case $m=1$, reasoning as for (C2.4), again after possibly rearranging indices of $L_1,\ldots,L_m$ as well as block columns and rows in $L$ (see Remark \ref{R:Reorder}), it follows that there exist $r,s>1$ (with $r+s+1\leq m$) so that $i_k=k$ and $j_k=1$ for $k=1,\ldots,r$, $i_{r+l}=r+1$ and $j_{r+l}=l+1$ for $l=1,\ldots,s$ and $i_{r+s+1}=r+2$, $j_{r+s+1}=s+2$, while $i_p\neq 1,\ldots, r+2$ and $j_p\neq 1,\ldots, s+2$ for $p> r+s+1$. Let $y\in\BF^m$ be given as $y=E(z \otimes x)$ for some $z \in \BF^q$ and $x \in \BF^n$. Then $y_k= x_k z_1$ for $k=1,\ldots,r$, $y_{r+l}= x_{r+1} z_{1+l}$ for $l=1,\ldots,s$ and $y_{r+s+1}=x_{r+2}z_{s+2}$, while $z_1,\ldots,z_{s+2},x_1,\ldots,x_{r+2}$ do not occur in the factorisations $y_p=x_{i_p}z_{j_p}$ for $p>r+s+1$. We now adjust $x$ and $z$ depending on whether $\sum_{j\neq s+2} z_j$ is zero or not. In case $\sum_{j\neq s+2} z_j\neq 0$, we just need to change $z_{s+2}$ and $x_{r+2}$ to  $z_{s+2}:=-\sum_{j\neq s+2} z_j$ and $x_{r+2}:=-y_{r+s+1}/\left(\sum_{j\neq s+2} z_j\right)$. In case $\sum_{j\neq s+2} z_j= 0$, select a $a\neq 0, z_1$, and redefine $z_{s+2}:=a$, $x_{r+2}=y_{r+s+1}/a$, $x_k:= y_k/(z_1-a)$ for $k=1,\ldots,r$ and change $z_1$ to $z_1-a$. In both cases it is easy to verify that the adjusted vectors $x$ and $z$ still satisfy $y=E(z \otimes x)$ while now also $\sum_{j=1}^q z_j=0$.

Now we prove the necessity claim. Since $\fE_2\subset\fE_1$, it is clear from Lemma \ref{L:fE1=Fm} that (C1) is a necessary condition for $\fE_2=\BF^m$ to hold. Hence, assume (C1) holds. To prove necessity we show that in case the conditions (C2.1)--(C2.7) all do not hold, then we do not have $\fE_2=\BF^m$. Hence assume the conditions (C2.1)--(C2.7) do not hold.

As a first step we show that this corresponds to the cases (i)--(iii). It is easily verified that the cases (i)--(iii) satisfy (C1) whilst not satisfying any of (C2.1)--(C2.7). Conversely, assume (C1) holds and non of (C2.1)-(C2.7) hold. This is equivalent to (C1) holding and the negation of each of (C2.1)-(C2.7) being true, which are given by:
\begin{itemize}
\item[$\neg$(C2.1)] for all $i$ there exists a $k$ such that $ i = i_k$;

\item[$\neg$(C2.2)] for all $j$ there exists a $k$ such that $j= j_k$;

\item[$\neg$(C2.3)] if there exists distinct $k_1,k_2,k_3,k_4$ such that $i_{k_1}=i_{k_2}$ and $i_{k_3}=i_{k_4}$, then $i_{k_1}=i_{k_2}=i_{k_3}=i_{k_4};$

\item[$\neg$(C2.4)] if there exists distinct $k_1,k_2,k_3,k_4$ such that $j_{k_1}=j_{k_2}$ and $j_{k_3}=j_{k_4}$, then $j_{k_1}=j_{k_2}=j_{k_3}=j_{k_4};$

\item[$\neg$(C2.5)] there exist $k\neq l$ such that $i_k = i_l$ or $q=1$ and $m = n$;

\item[$\neg$(C2.6)] there exist $k\neq l$ such that $j_k = j_l$ or $n=1$ and $m = q$;

\item[$\neg$(C2.7)] for all $k$ there exists a $l \neq k$ such that $i_k = i_l$ or $j_k = j_l$.

\end{itemize}

First assume we have $q=1$. By $\neg$(C2.1) it follows that necessarily $m=n$, so that we are in case (i). Likewise, if $n=1$, then $L$  must be as in (ii).

In case $q=2$, because of $\neg$(C2.2), either (C2.3) or (C2.5) holds. Hence $q=2$ cannot occur. Similarly, $n=2$ cannot occur.

In the case where $n> 2$ and $q > 2$, following $\neg$(C2.1) and $\neg$(C2.2) we know every row and column of $L$ contain at least one $L_k$. Furthermore, following $\neg$(C2.5) and $\neg$(C2.6) we know there is at least one row and one column of $L$ that contain more than one $L_k$ and together with $\neg$(C2.3) and $\neg$(C2.4) we know there can be at most one such row and column. Lastly, considering $\neg$(C2.7) we have for every row or column of $L$ more than one $L_k.$ Combining these requirements above, together with (C1) allows for no other form of $L$ than that in (iii).

To complete the proof we show that in each of the cases (i)--(iii) we do not have $\fE_2 \neq \BF^m$. In case (i), we have \begin{equation*}
    y = E\left(z \otimes x\right) = \begin{bmatrix}
        x_1z_1 \\ \vdots \\x_nz_1
    \end{bmatrix}.
\end{equation*}Thus we need $x_i \in \BF$ to be free and $z_1 \neq 0$ to get $\fE_2=\BF^m$, proving that we cannot ensure $\fE_2 = \BF^m$ and $\sum_{i=1}^nx_i=0$ or $z_1=0.$ In a similar manner it can be proven that case (ii) implies $\fE_2 \neq \BF^m,$ by swapping the roles of $z_j$ and $x_i$.

Finally, assume $L$ has the form as in case (iii). For $x\in\BF^n$ and $z\in\BF^q$ we have
\begin{multline*}
E(z \otimes x) = \left[\begin{matrix}
        x_{r}z_1 & \hdots & x_{r}z_{s-1} & x_{r}z_{s+1} & \hdots & x_{r}z_{q} \end{matrix} \right. \\ \left. \begin{matrix}  x_1z_{s} & \hdots & x_{r-1}z_s & x_{r+1}z_{s} & \hdots & x_nz_s
    \end{matrix}\right]^T.
\end{multline*}
Since $q,n>2$ we can choose $y\in \BF^m$ such that all entries of $y$ are non-zero while $\sum_{j=1}^{q-1}y_j=0$ and $\sum_{j=q}^{q+n-2}y_j=0$. Let $x\in\BF^n$ and $z\in\BF^q$ such that $y=E (z\otimes x)$. Then all entries of $x$ and $z$ must be non-zero. Also, $0=\sum_{j=1}^{q-1}y_j=x_r\sum_{j\neq s}z_j$. Since $x_r\neq 0$, we have $\sum_{j\neq s}z_j=0$. Therefore, $\sum_{j=1}^q z_j=z_{s}\neq 0$. Similarly, from $\sum_{j=q}^{q+n-2}y_j=0$ one obtains that $\sum_{i=1}^n x_i=x_{r}\neq 0$. Hence $y\neq \fE_2$.
\end{proof}

It is now easy to prove Theorem \ref{T:Main} when $\sum_{k=1}^m \al_k \neq 1$ under the additional condition that one of (C2.1)--(C2.7) given in Lemma \ref{L:fE2=Fm} holds.

\begin{lemma}\label{L:SumAl_k=1}
Assume (C1) in \eqref{C1} holds together with one of (C2.1)--(C2.7) in Lemma \ref{L:fE2=Fm}. For the matrix $L_0\in \fS$ let $\al_1,\ldots,\al_m\in\BF$ so that $L_0=\sum_{k=1}^m \al_k L_k$. Assume in addition that $\sum_{k=1}^m \al_k \neq 1$. Then $\fY_{\whatA}=\BF^m$.
\end{lemma}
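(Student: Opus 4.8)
The statement is essentially an immediate consequence of the two preceding lemmas once one exploits the special form of $\whatA$ recorded in \eqref{whatAforms1}. Recall from \eqref{whatAforms1} that in the situation under consideration
\[
\whatA=\left(I_m-\al \OneVec_{m}^T\right) E + \al\OneVec_{nq}^T ,
\]
so that for $z\in\BF^q$ and $x\in\BF^n$ we have
\[
\whatA(z\otimes x)=\left(I_m-\al \OneVec_{m}^T\right) E(z\otimes x) + \left(\OneVec_{nq}^T(z\otimes x)\right)\al .
\]
The idea is to restrict attention to those pairs $(z,x)$ for which the rank-one correction term vanishes, i.e.\ $\OneVec_{nq}^T(z\otimes x)=0$, since on that set $\whatA(z\otimes x)=\left(I_m-\al \OneVec_{m}^T\right) E(z\otimes x)$, and $E(z\otimes x)$ ranges precisely over $\fE_2$ as defined in \eqref{fE2}.

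First I would invoke Lemma \ref{L:fE2=Fm}: the hypothesis that (C1) holds together with one of (C2.1)--(C2.7) gives $\fE_2=\BF^m$. Next I would use Lemma \ref{L:Ipert}: since $\sum_{k=1}^m\al_k\neq 1$, the matrix $I_m-\al\OneVec_m^T$ is invertible. Combining the two,
\[
\fY_{\whatA}\ \supseteq\ \left\{\whatA(z\otimes x)\colon \OneVec_{nq}^T(z\otimes x)=0\right\}
=\left(I_m-\al \OneVec_{m}^T\right)\fE_2=\left(I_m-\al \OneVec_{m}^T\right)\BF^m=\BF^m ,
\]
and since the reverse inclusion $\fY_{\whatA}\subseteq\BF^m$ is trivial, we conclude $\fY_{\whatA}=\BF^m$.

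There is no genuine obstacle here; the only point to be careful about is that the constraint $\OneVec_{nq}^T(z\otimes x)=0$ (equivalently, by \eqref{OneVecCon}, $\sum_j z_j=0$ or $\sum_i x_i=0$) is exactly the one built into the definition of $\fE_2$, so that applying Lemma \ref{L:fE2=Fm} is legitimate, and that the invertibility of $I_m-\al\OneVec_m^T$ is what turns surjectivity of $E(\cdot)$ onto $\fE_2$ into surjectivity of $\whatA(\cdot)$. Once Theorem \ref{T:Main} is phrased via Corollary \ref{C:fYwhatA=Fm}, $\fY_{\whatA}=\BF^m$ yields that a positive such $\cL$ is completely positive.
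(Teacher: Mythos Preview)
Your proof is correct and follows essentially the same approach as the paper's: both restrict to pairs $(z,x)$ with $\OneVec_{nq}^T(z\otimes x)=0$, use Lemma~\ref{L:fE2=Fm} to get $\fE_2=\BF^m$, and then invoke the invertibility of $I_m-\al\OneVec_m^T$ from Lemma~\ref{L:Ipert} to conclude. The only difference is cosmetic: the paper argues elementwise (given $y$, set $\what y=(I_m-\al\OneVec_m^T)^{-1}y$ and pull it back through $\fE_2$), while you phrase the same computation as a chain of set inclusions.
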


\begin{proof}[\bf Proof]
Let $y\in\BF^m$. By Lemma \ref{L:Ipert} the matrix $I_m-\al \OneVec_{m}^T$ is invertible. Hence $y=\left(I_m-\al \OneVec_{m}^T\right)\what{y}$ for some $\what{y}\in\BF^m$. By Lemma \ref{L:fE2=Fm} we have $\fE_2=\BF^m$. Thus $\what{y}=E(z \otimes x)$ for some $z\in\BF^q$ and $x\in\BF^n$ with $\OneVec_{nq}^T(z\otimes x)=0$. This implies that
\[
\whatA (z\otimes x) = \left(I_m-\al \OneVec_{m}^T E\right) (z \otimes x) + \al \OneVec_{nq}^T(z\otimes x) = \left(I_m-\al \OneVec_{m}^T E\right) \what{y} +0 =y.
\]
Hence $y\in\fY_{\whatA}$. Thus $\fY_{\whatA}=\BF^m$.
\end{proof}

Hence, by the last statement of Lemma \ref{L:fE2=Fm}, it remains to prove that $\fY_{\whatA}=\BF^m$ in the setting of the present subsection for the cases (i), (ii) and (iii) of Lemma \ref{L:fE2=Fm}. For (i) and (ii) this is already covered by Proposition \ref{P:ZeroPattern}, also when $\sum_{k=1}^m \al_k=1$.

\begin{lemma}\label{L:Case (i) and (ii)}
Assume $L$ is as in (i) or (ii) in Lemma \ref{L:fE2=Fm}. Then $\fY_{\whatA}=\BF^m$.
\end{lemma}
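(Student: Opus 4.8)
The plan is to derive this directly from Proposition~\ref{P:ZeroPattern}. The key observation is that in both cases (i) and (ii) of Lemma~\ref{L:fE2=Fm} the block matrix $L$ consists \emph{only} of the linearly independent blocks $L_1,\ldots,L_m$: in case (i) there are exactly $nq=n=m$ blocks $L_{11},\ldots,L_{n1}$, all of them among $L_1,\ldots,L_m$, and similarly in case (ii) there are exactly $nq=q=m$ blocks, all among $L_1,\ldots,L_m$. Hence there is no index pair $(i,j)$ with $(i,j)\neq(i_k,j_k)$ for all $k$, so the hypothesis ``$L_{ij}=0$ for all $(i,j)$ with $(i,j)\neq(i_k,j_k)$'' of Proposition~\ref{P:ZeroPattern} holds vacuously, and by Lemma~\ref{form of A_k} (the sum over $(i,j)\neq(i_k,j_k)$ being empty) we get $\whatA=E$, so that $\fY_{\whatA}=\fE_1$.

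Next I would verify that condition (C1) of \eqref{C1} holds in each case, which is immediate from the shape of $L$. In case (i) we have $q=1$, so $j_k=1$ for all $k$ while $i_1,\ldots,i_m$ are $m=n$ distinct indices; thus for every $k$ we have $i_l\neq i_k$ for all $l\neq k$. In case (ii), symmetrically, $j_l\neq j_k$ for all $l\neq k$. In either case (C1) holds, so Lemma~\ref{L:fE1=Fm} gives $\fE_1=\BF^m$, and therefore $\fY_{\whatA}=\BF^m$. (Equivalently and more concretely, after relabeling the pairs as $(i_k,j_k)=(k,1)$ in case (i) and $(i_k,j_k)=(1,k)$ in case (ii), one has $E=I_m$ and $v=\OneVec_{nq}-E^{T}\OneVec_{m}=0$, so $\whatA=E=I_m$; since $E(z\otimes x)=z_1 x$ in case (i) and $E(z\otimes x)=x_1 z$ in case (ii), taking $z_1=1$, respectively $x_1=1$, produces any prescribed vector in $\BF^m$.)

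I do not expect any real obstacle here; this is essentially a corollary of the $L_0=0$ analysis. The only point worth flagging is the one already noted in the text preceding the lemma: because in cases (i) and (ii) there are genuinely no ``remaining'' blocks, the scalar $\sum_{k=1}^m\al_k$ never enters, so — unlike the general argument of this subsection — one need not split into the cases $\sum_{k=1}^m\al_k\neq 1$ and $\sum_{k=1}^m\al_k=1$, and the zero-pattern result of Proposition~\ref{P:ZeroPattern} applies verbatim regardless of the value of $L_0$.
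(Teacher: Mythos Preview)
Your proposal is correct and follows exactly the same approach as the paper: the paper's proof is the one-line observation that in both (i) and (ii) the block entries of $L$ are linearly independent and (C1) is satisfied, so the claim follows from Proposition~\ref{P:ZeroPattern}. You have simply spelled out the verification that the hypothesis of Proposition~\ref{P:ZeroPattern} holds vacuously (there being no remaining blocks) and that (C1) is met, which the paper leaves implicit.
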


\begin{proof}[\bf Proof]
For both (i) and (ii) the block entries of $L$ are assumed to be linearly independent and (C1) is satisfied, hence the claim follows from Proposition \ref{P:ZeroPattern}.
\end{proof}

Finally, we prove the claim for $L$ as in (iii) of Lemma \ref{L:fE2=Fm}. First we prove a lemma that will also be of use when we consider the case where $\sum_{k=1}^m \al_k =1$.

\begin{lemma}\label{L:Case (iii) prep}
Assume $L$ is as in case (iii) of Lemma \ref{L:fE2=Fm} and define $E$ as in \eqref{Emat}. Then for each $y^\circ\in\BF^m$ and $x_r,z_s\neq 0$ there exist unique $x\in \BF^n$ and $z\in\BF^q$ so that $y^\circ=E(z\otimes x)$ and so that $x_r$ and $z_s$ correspond to the $r$-th and $s$-th components of $x$ and $z$, respectively. In that case, we have
\begin{equation}\label{factForm}
\OneVec_{nq}^T (z \otimes x) =
\left(\frac{\wtil{y}^\circ_2}{z_s} + x_r \right)
\left(\frac{\wtil{y}^\circ_1}{x_r} + z_s\right),\quad
\mbox{where}\quad  \wtil{y}^\circ_1=\sum_{k=1}^{q-1} y^\circ_k,\quad  \wtil{y}^\circ_2=\sum_{k=q}^{m} y^\circ_k.
\end{equation}
\end{lemma}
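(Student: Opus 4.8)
The plan is to read off the index pattern from the displayed form of $L$ in case (iii) and then solve the system $y^\circ = E(z\otimes x)$ by hand. Writing $L_k = L_{i_k j_k}$, the shape of $L$ forces: for $k=1,\ldots,q-1$ we have $i_k=r$ while $j_k$ runs through $\{1,\ldots,q\}\setminus\{s\}$, and for $k=q,\ldots,m=q+n-2$ we have $j_k=s$ while $i_k$ runs through $\{1,\ldots,n\}\setminus\{r\}$. Consequently, by \eqref{E(z,x)}, the first $q-1$ entries of $E(z\otimes x)$ are of the form $x_r z_{j_k}$ and the last $n-1$ entries are of the form $x_{i_k} z_s$.

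Given $y^\circ\in\BF^m$ and $x_r, z_s\neq 0$, I would define $z_j := y^\circ_k/x_r$ for the unique $k\in\{1,\ldots,q-1\}$ with $j_k=j$ (one such $k$ for each $j\neq s$) and $x_i := y^\circ_k/z_s$ for the unique $k\in\{q,\ldots,m\}$ with $i_k=i$ (one such $k$ for each $i\neq r$), together with the prescribed values of $x_r$ and $z_s$. By construction $x_r z_{j_k} = y^\circ_k$ for $k\le q-1$ and $x_{i_k} z_s = y^\circ_k$ for $k\ge q$, so $y^\circ = E(z\otimes x)$; and since every entry of $E(z\otimes x)$ involves exactly one of the nonzero scalars $x_r$, $z_s$, these choices are forced once $x_r$ and $z_s$ are fixed, which gives uniqueness.

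For the identity \eqref{factForm} I would apply \eqref{OneVecCon} to write $\OneVec_{nq}^T(z\otimes x) = \bigl(\sum_{j=1}^q z_j\bigr)\bigl(\sum_{i=1}^n x_i\bigr)$ and then substitute the formulas above. Splitting off the $s$-th term and using that $j\mapsto k$ (with $j_k=j$) is a bijection from $\{1,\ldots,q\}\setminus\{s\}$ onto $\{1,\ldots,q-1\}$, one gets $\sum_{j=1}^q z_j = z_s + \sum_{j\neq s} z_j = z_s + x_r^{-1}\sum_{k=1}^{q-1} y^\circ_k = z_s + \wtil{y}^\circ_1/x_r$. Symmetrically, using that $i\mapsto k$ (with $i_k=i$) is a bijection from $\{1,\ldots,n\}\setminus\{r\}$ onto $\{q,\ldots,m\}$, one gets $\sum_{i=1}^n x_i = x_r + z_s^{-1}\sum_{k=q}^{m} y^\circ_k = x_r + \wtil{y}^\circ_2/z_s$. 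Multiplying the two expressions yields exactly \eqref{factForm}.

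There is no genuine obstacle here; the only point demanding care is the index bookkeeping, namely verifying that the two reindexings just described are indeed bijections onto the stated sets, which is immediate from the explicit block form of $L$ in case (iii), together with the observation that in this case each coordinate of $E(z\otimes x)$ is a product in which precisely one of $x_r$, $z_s$ appears.
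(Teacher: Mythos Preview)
Your proposal is correct and follows essentially the same approach as the paper's proof: both read off from the block form of $L$ in case (iii) that the equations $y^\circ=E(z\otimes x)$ force $z_j=y^\circ_k/x_r$ for $j\neq s$ and $x_i=y^\circ_k/z_s$ for $i\neq r$, establish existence and uniqueness from this, and then obtain \eqref{factForm} by substituting into \eqref{OneVecCon}. The only difference is cosmetic---the paper spells out the explicit index shifts ($z_k=y^\circ_k/x_r$ for $k<s$, $z_{k+1}=y^\circ_k/x_r$ for $k\ge s$, etc.) whereas you phrase it via the bijections $j\mapsto k$ and $i\mapsto k$.
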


\begin{proof}[\bf Proof]
The equation $y^\circ=E(z\otimes x)$ with $x_r,z_s\neq 0$ implies that $y^\circ_k=x_rz_k$, so that $z_k=y^\circ_k/x_r$, for $k=1, \ldots, s-1;$ $y^\circ_k=x_rz_{k+1}$, so that $z_{k+1}=y^\circ_k/x_r$, for $k=s, \ldots, q-1$; $y^\circ_{q-1+k}=x_{k}z_{s}$, so that $x_k=y^\circ_{q-1+k}/z_s$, for $k=1, \ldots, r-1$ and $y^\circ_{q-1+k}=x_{k+1}z_{s}$, so that $x_{k+1}=y^\circ_{q-1+k}/z_s$, for $k=r, \ldots, n-1$. Hence, $x$ and $z$ exist and are uniquely determined by $y^\circ$ and  $x_r,z_s$, as claimed. Moreover, we have
\[
\sum_{i=1}^n x_i = \sum_{i=1}^{r-1} x_i + x_r + \sum_{i=r+1}^n x_i = \sum_{k=1}^{n-1}\frac{y^\circ_{q-1+k}}{z_s} + x_r = \frac{\wtil{y}^\circ_2}{z_s} + x_r,
\]
and a similar computation shows that $\sum_{j=1}^q z_j=\frac{\wtil{y}^\circ_1}{x_r} + z_s$. Then \eqref{factForm} follows directly from \eqref{OneVecCon}.
\end{proof}

\begin{lemma}\label{L:Case (iii) inv}
Assume $L$ is as in case (iii) of Lemma \ref{L:fE2=Fm} and $\sum_{k=1}^m \alpha_k \neq1$. Then for $\BF=\BC$ we have  $\fY_{\whatA}=\BC^m$, while for $\BF=\BR$ we have  $\fY_{\whatA}=\BR^m$ if and only if
\begin{equation}\label{al12}
4\wtil{\al}_1 \wtil{\al}_2\leq 1,\quad \mbox{with}\quad \wtil{\al}_1=\sum_{k=1}^{q-1} \al_k,\ \wtil{\al}_2=\sum_{k=q}^{m} \al_k.
\end{equation}
Moreover, in case $4\wtil{\al}_1 \wtil{\al}_2> 1$, we have
\begin{equation*}\label{fY case (iii)}
\fY_{\whatA}=\left\{y\in\BR^m \colon
\sum_{k=1}^{q-1} y_k\neq 0 \mbox{ or }\sum_{k=q}^{m} y_k\neq 0,\mbox{ or }\sum_{k=1}^{q-1} |y_k|=0,\mbox{ or }\sum_{k=q}^{m} |y_k|=0
\right\}.
\end{equation*}
\end{lemma}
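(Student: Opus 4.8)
The plan is to describe $\fY_{\whatA}$ explicitly from the shape of $\whatA$. In case (iii) every block entry of $L$ other than $L_1,\dots,L_m$ equals $L_0=\sum_{k=1}^m\al_kL_k$, so by Lemma \ref{form of A_k} the matrix $\whatA$ has the form \eqref{whatAforms1}, and hence for $z\in\BF^q$, $x\in\BF^n$,
\[
\whatA(z\otimes x)=E(z\otimes x)+\al\bigl(\OneVec_{nq}^T(z\otimes x)-\OneVec_m^TE(z\otimes x)\bigr).
\]
I would compute $\fY_{\whatA}=\{\whatA(z\otimes x):z,x\}$ by splitting into the ``generic'' part, where $x_r\neq0\neq z_s$ (with $r,s$ as in case (iii)), and the ``degenerate'' part, where $x_r=0$ or $z_s=0$.

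For the generic part, fix $z,x$ with $x_r\neq0\neq z_s$, set $y^\circ:=E(z\otimes x)$ and $u:=x_rz_s\in\BF\setminus\{0\}$. By Lemma \ref{L:Case (iii) prep}, $(z,x)$ is recovered from $(y^\circ,x_r,z_s)$ and, conversely, every $(y^\circ,u)\in\BF^m\times(\BF\setminus\{0\})$ occurs; moreover, expanding \eqref{factForm} and using $\OneVec_m^TE(z\otimes x)=\wtil{y}^\circ_1+\wtil{y}^\circ_2$ gives $\OneVec_{nq}^T(z\otimes x)-\OneVec_m^TE(z\otimes x)=u+\wtil{y}^\circ_1\wtil{y}^\circ_2/u$. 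Thus the generic part of $\fY_{\whatA}$ equals $\{y^\circ+\al(u+\wtil{y}^\circ_1\wtil{y}^\circ_2/u):y^\circ\in\BF^m,\ u\in\BF\setminus\{0\}\}$, and writing $y^\circ=y-\al c$ (so $\wtil{y}^\circ_i=\wtil{y}_i-\wtil{\al}_ic$, with $\wtil{y}_1=\sum_{k=1}^{q-1}y_k$ and $\wtil{y}_2=\sum_{k=q}^m y_k$) one sees that $y$ lies in the generic part if and only if there exist $c\in\BF$ and $u\in\BF\setminus\{0\}$ with
\[
u^2-cu+(\wtil{y}_1-\wtil{\al}_1c)(\wtil{y}_2-\wtil{\al}_2c)=0.
\]

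Next I would analyse this relation. Over $\BC$ it is always solvable with $u\neq0$: for any $c\neq0$ the two roots in $u$ sum to $c$, so not both vanish; this already gives $\fY_{\whatA}=\BC^m$. Over $\BR$, for a given $c$ it has a real root $u$ precisely when $Q(c):=c^2-4(\wtil{y}_1-\wtil{\al}_1c)(\wtil{y}_2-\wtil{\al}_2c)=(1-4\wtil{\al}_1\wtil{\al}_2)c^2+4(\wtil{\al}_1\wtil{y}_2+\wtil{\al}_2\wtil{y}_1)c-4\wtil{y}_1\wtil{y}_2\ge0$, and (apart from a degenerate subcase treated below) a nonzero such root can be selected. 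If $4\wtil{\al}_1\wtil{\al}_2<1$ then $Q(c)\to+\infty$; if $4\wtil{\al}_1\wtil{\al}_2=1$ then $Q$ is affine and is either nonconstant (hence positive somewhere) or has zero slope, in which case a short computation forces $\wtil{y}_1\wtil{y}_2\le0$, so $Q\equiv-4\wtil{y}_1\wtil{y}_2\ge0$; in all these subcases $Q$ is nonnegative somewhere for every $y$, so $\fY_{\whatA}=\BR^m$. If $4\wtil{\al}_1\wtil{\al}_2>1$ then $\wtil{\al}_1,\wtil{\al}_2\neq0$: when $\wtil{y}_1\neq0$ the choice $c=\wtil{y}_1/\wtil{\al}_1\neq0$ kills the constant term, so $u=c$ solves the relation (symmetrically when $\wtil{y}_2\neq0$), whence $y\in\fY_{\whatA}$; when $\wtil{y}_1=\wtil{y}_2=0$ one has $Q(c)=(1-4\wtil{\al}_1\wtil{\al}_2)c^2\le0$, vanishing only at $c=0$, which forces $u=0$, so no such $y$ lies in the generic part.

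Finally I would treat the degenerate part and assemble the result. If $x_r=0$ then the first $q-1$ components of $E(z\otimes x)$ vanish and $\OneVec_{nq}^T(z\otimes x)-\OneVec_m^TE(z\otimes x)=\bigl(\sum_{j\neq s}z_j\bigr)\bigl(\sum_{i\neq r}x_i\bigr)=:c$, so any $y$ reached this way satisfies $(y_1,\dots,y_{q-1})=c(\al_1,\dots,\al_{q-1})$; conversely, if $(y_1,\dots,y_{q-1})=0$ one realises $y$ by $x_r=0$, $z_j=0$ for $j\neq s$, $z_s=1$, $x_{i_{q-1+l}}=y_{q-1+l}$ (so $c=0$), and $z_s=0$ is symmetric with $(y_q,\dots,y_m)$ in place of $(y_1,\dots,y_{q-1})$. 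Combining with the generic analysis, for $4\wtil{\al}_1\wtil{\al}_2>1$ we obtain $y\in\fY_{\whatA}$ exactly when $\wtil{y}_1\neq0$ or $\wtil{y}_2\neq0$ or $(y_1,\dots,y_{q-1})=0$ or $(y_q,\dots,y_m)=0$: if instead $\wtil{y}_1=\wtil{y}_2=0$ with both blocks nonzero, the generic part misses $y$ (as above) and the degenerate part misses it too, since $(y_1,\dots,y_{q-1})=c(\al_1,\dots,\al_{q-1})$ forces $0=\wtil{y}_1=c\wtil{\al}_1$, hence $c=0$ and $(y_1,\dots,y_{q-1})=0$, a contradiction (symmetrically for $z_s=0$). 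Since such a $y$ exists (take $y_1=-y_2=1$, $y_q=-y_m=1$ and all other entries $0$, using $n,q>2$), this also shows $\fY_{\whatA}\neq\BR^m$ when $4\wtil{\al}_1\wtil{\al}_2>1$, completing the equivalence. I expect the most delicate point to be the bookkeeping in the degenerate part — pinning down exactly which vectors are reachable once $x_r=0$ or $z_s=0$ — since once Lemma \ref{L:Case (iii) prep} reduces the generic part to a single scalar quadratic in $u$, the remaining case analysis is routine.
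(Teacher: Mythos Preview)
Your proof is correct and follows essentially the same route as the paper: use Lemma \ref{L:Case (iii) prep} to reduce the generic part ($x_r,z_s\neq0$) to the scalar quadratic $u^2-cu+(\wtil{y}_1-\wtil{\al}_1c)(\wtil{y}_2-\wtil{\al}_2c)=0$ and analyse its discriminant $Q(c)$, which is exactly the paper's $f(\rho)$ under $\rho=-c$, then handle the degenerate part $x_r=0$ or $z_s=0$ separately. Your parametrisation is a bit more direct than the paper's (which first introduces an auxiliary $\lambda$ and then $\rho=(\wtil{y}-\lambda)/(1-\wtil{\al})$), and your treatment of the case $4\wtil{\al}_1\wtil{\al}_2>1$ with $\wtil{y}_1\neq0$ --- choosing $c=\wtil{y}_1/\wtil{\al}_1$ to kill the constant term and take $u=c$ --- is cleaner than the paper's computation of $f$ at its maximum; incidentally, your argument never actually uses the hypothesis $\sum_k\alpha_k\neq1$.
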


\begin{proof}[\bf Proof]
Assume $L$ has the form as in case (iii) in Lemma \ref{L:fE2=Fm} and $\sum_{k=1}^m \alpha_k\neq1$.
Fix a $y\in\BF^m$. In this case the matrix $I_m - \alpha \OneVec_m^T$ is invertible, by Lemma \ref{L:Ipert}, and hence for each $\la\in\BF$ we can find a unique $y_\la^\circ=y^\circ\in\BF^m$ so that
\[
y -\al \la = \left(I_m - \alpha \OneVec_m^T\right)y^\circ,\quad \mbox{that is} \quad y= \left(I_m - \alpha \OneVec_m^T\right)y^\circ  +\al \la.
\]
Now write $y^\circ=E (z\otimes x)$ with $x_r,z_s\neq 0$ so that $x$ and $z$ are uniquely determined by $y^\circ$ and $x_r,z_s$, that is, by $y,\la,x_r,z_s$, and formula \eqref{factForm} applies. Define $\wtil{\al}_1$ and $\wtil{\al}_2$ as in \eqref{al12} and set
\begin{align}\label{tilaly}
\wtil{\al}=\sum_{k=1}^m \al_k,\quad
\wtil{y}=\sum_{k=1}^m y_k,\quad \wtil{y}_1=\sum_{k=1}^{q-1} y_k,\quad \wtil{y}_2=\sum_{k=q}^{m} y_k.
\end{align}
Using the inversion formula from Lemma \ref{L:Ipert} we obtain that
\begin{align*}
y^\circ&=\left(I_m-\alpha \OneVec_m^T\right)^{-1}\left(y -\la \alpha\right)
= \left(I_m +(1-\wtil{\al})^{-1} \alpha\OneVec_m^T\right)\left(y -\la \alpha\right) \\
&= \left(y -\la \alpha\right) + \frac{\wtil{y} -\la \wtil{\al}}{1-\wtil{\al}}\al
=y + \frac{-\la(1-\wtil{\al}) + \wtil{y} -\la \wtil{\al}}{1-\wtil{\al}}\al
=y + \frac{\wtil{y} -\la}{1-\wtil{\al}}\al.
\end{align*}
Therefore, we have that
\[
\wtil{y}_1^\circ=\sum_{k=1}^{q-1} y_k^\circ = \wtil{y}_1 + \frac{\wtil{y} -\la}{1-\wtil{\al}}\wtil{\al}_1 \ands
\wtil{y}_2^\circ=\sum_{k=q}^{m} y_k^\circ = \wtil{y}_2 + \frac{\wtil{y} -\la}{1-\wtil{\al}}\wtil{\al}_2.
\]
We now show how the variables $\la,x_r,z_s$, subject to $x_r,z_s\neq 0$, can be chosen in such a way that $\OneVec_{nq}^T (z \otimes x)=\la$, that is, using \eqref{factForm}, that
\begin{equation}\label{laId2}
\left(\frac{\wtil{y}_2 + \frac{\wtil{y} -\la}{1-\wtil{\al}}\wtil{\al}_2}{z_s} + x_r \right)
\left(\frac{\wtil{y}_1 + \frac{\wtil{y} -\la}{1-\wtil{\al}}\wtil{\al}_1}{x_r} + z_s\right)=
\la.
\end{equation}
Set $\rho:=\frac{\wtil{y}-\la}{1-\wtil{\alpha}},$ so that $\la = \rho\left(\wtil{\alpha}-1\right)+ \wtil{y}$. Note that varying $\la$ in $\BF$, $\rho$ obtains all values in $\BF$ as well. Then \eqref{laId2} translates to
\begin{align*}
\rho\left(\wtil{\alpha}-1\right)+ \wtil{y}
&=\left( \frac{\wtil{y}_2 + \rho\wtil{\al}_2}{z_s} + x_r \right)
\left(\frac{\wtil{y}_1 + \rho\wtil{\al}_1}{x_r} + z_s \right)\\
&=\frac{\left(\wtil{y}_2 + \rho\wtil{\al}_2\right)\left(\wtil{y}_1 + \rho\wtil{\al}_1\right)}{z_s x_r} + \wtil{y}_2 + \rho\wtil{\al}_2 + \wtil{y}_1 + \rho\wtil{\al}_1 + z_s x_r\\
&=\frac{\left(\wtil{y}_2 + \rho\wtil{\al}_2\right)\left(\wtil{y}_1 + \rho\wtil{\al}_1\right)}{z_s x_r} + \rho\wtil{\al} + \wtil{y} + z_s x_r.
\end{align*}
Multiplying with $z_s x_r$ on both sides and rearranging terms this yields
\begin{equation}\label{rhoId1}
(z_s x_r)^2 + \rho z_s x_r + \left(\wtil{y}_2 + \rho\wtil{\al}_2\right)\left(\wtil{y}_1 + \rho\wtil{\al}_1\right)=0.
\end{equation}
For $\BF=\BC$ it is easy to find a $\rho$ so that the above equation in $z_s x_r$ has a non-zero solution, after which it remains to factor this solution to obtain non-zero $z_s$ and $x_r$ so that \eqref{laId2} holds.

For $\BF=\BR$, to obtain a non-zero real solution we need to find  $\rho$ so that in addition
\[
\left(1-4\wtil{\al}_1 \wtil{\al}_2\right)\rho^2-4\left(\wtil{y}_1 \wtil{\al}_2 + \wtil{y}_2 \wtil{\al}_1\right)\rho -4 \wtil{y}_1 \wtil{y}_2=\rho^2- 4 \left(\wtil{y}_2 + \rho\wtil{\al}_2\right)\left(\wtil{y}_1 + \rho\wtil{\al}_1\right)>0,
\]
or the left-hand side of the inequality equal to 0 for a $\rho\neq 0$.  Thus, the question is whether
\begin{equation}\label{f}
f(\rho)=(1-4\wtil{\al}_1 \wtil{\al}_2)\rho^2-4(\wtil{y}_1 \wtil{\al}_2 + \wtil{y}_2 \wtil{\al}_1)\rho -4 \wtil{y}_1 \wtil{y}_2
\end{equation}
attains positive values or is equal to zero at a non-zero point $\rho$. We consider three cases.\smallskip

\paragraph{\bf Case 1} Assume $1-4\wtil{\al}_1 \wtil{\al}_2>0$. In this case $f$ is a polynomial of degree two with positive main coefficient, hence positive values are attained for $|\rho|$ large enough.\smallskip

\paragraph{\bf Case 2} Assume $1-4\wtil{\al}_1 \wtil{\al}_2=0$. Then $\wtil{\al}_1$ and $\wtil{\al}_2$ are non-zero and of the same sign. If $\wtil{y}_1 \wtil{\al}_2 + \wtil{y}_2 \wtil{\al}_1\neq 0$, then $f$ is a polynomial of degree one and thus positive values are attained. In case $\wtil{y}_1 \wtil{\al}_2 + \wtil{y}_2 \wtil{\al}_1= 0$, $f$ is a constant function and two situations can occur: (1) $\wtil{y}_1 =\wtil{y}_2=0$ in which case $f$ is the zero-function so that for $\rho$ we can take any non-zero value, and (2) $\wtil{y}_1,\wtil{y}_2\neq 0$ with $\wtil{y}_1$ and $\wtil{y}_2$ of opposite sign (since $\wtil{\al}_1$ and $\wtil{\al}_2$ have the same sign) in which case the value of $f$ is $-4 \wtil{y}_1 \wtil{y}_2>0$ so that $\rho$ can be taken arbitrarily.\smallskip

\paragraph{\bf Case 3} Finally, assume $1-4\wtil{\al}_1 \wtil{\al}_2<0$. In this case $f$ is a polynomial of degree two with a negative main coefficient. Also $\wtil{\al}_1$ and $\wtil{\al}_2$ are non-zero and of the same sign. We need to show that in this case $\fY_{\whatA}$ is as stated.

First consider $y\in\BR^m$ with $\wtil{y}_1$ and $\wtil{y}_2$ not both zero. The maximum of $f$ is attained at $\rho_0=
2\left(\wtil{y}_1 \wtil{\al}_2 + \wtil{y}_2 \wtil{\al}_1\right)/\left(1-4\wtil{\al}_1 \wtil{\al}_2\right)$. Note that when $\rho_0=0$, that is, when $\wtil{y}_1 \wtil{\al}_2 + \wtil{y}_2 \wtil{\al}_1=0$, since $\wtil{\al}_1,\wtil{\al}_2\neq 0$ and have the same sign, $\wtil{y}_1,\wtil{y}_2\neq 0$ and have opposite signs, so that $f(\rho_0)=f(0)=-4 \wtil{y}_1 \wtil{y}_2>0$. Therefore, it sufficed to show that $f(\rho_0)\geq 0$. This occurs when
\[
(\wtil{y}_1 \wtil{\al}_2 + \wtil{y}_2 \wtil{\al}_1)^2 + \wtil{y}_1 \wtil{y}_2 (1-4\wtil{\al}_1 \wtil{\al}_2)\geq 0.
\]
Since $1-4\wtil{\al}_1 \wtil{\al}_2<0$, the inequality clearly holds when $\wtil{y}_1 \wtil{y}_2\leq 0$. On the other hand, for $\wtil{y}_1 \wtil{y}_2>0$ we have
\begin{align*}
&(\wtil{y}_1 \wtil{\al}_2 + \wtil{y}_2 \wtil{\al}_1)^2 + \wtil{y}_1 \wtil{y}_2 (1-4\wtil{\al}_1 \wtil{\al}_2)=\\
& \qquad \qquad = (\wtil{y}_1^2 \wtil{\al}_2^2 + 2 \wtil{\al}_1 \wtil{\al}_2 \wtil{y}_1 \wtil{y}_2 + \wtil{y}_2^2 \wtil{\al}_1^2
+ \wtil{y}_1 \wtil{y}_2 - 4\wtil{\al}_1 \wtil{\al}_2 \wtil{y}_1 \wtil{y}_2)\\
& \qquad \qquad = (\wtil{y}_1 \wtil{\al}_2 - \wtil{y}_2 \wtil{\al}_1)^2 + \wtil{y}_1 \wtil{y}_2> 0.
\end{align*}
Hence $y\in\fY_{\whatA}$ when $\wtil{y}_1\neq 0$ or $\wtil{y}_2\neq 0$.

Next take $y\in\BR^m$ with $\wtil{y}_1=\wtil{y}_2=0$, so that also $\wtil{y}=0$, but not all summands in $\wtil{y}_1$ and $\wtil{y}_2$ are zero. Such $y$ exist and we now show they are not in $\fY_{\whatA}$. Assume to the contrary that there exist $x\in\BR^n$ and $z\in\BR^q$ so that
\[
y=\whatA (z\otimes x) = \left(I_m+ \al \OneVec_m^T\right)E (z\otimes x) + \al\OneVec_{nq}^T (z\otimes x).
\]
Set
\[
y^\circ:=E(z\otimes x) \ands \la:=\OneVec_{nq}^T (z\otimes x)\quad \mbox{so that}\quad
y=\left(I_m+ \al \OneVec_m^T\right)y^\circ + \al\la.
\]
To use Lemma \ref{L:Case (iii) prep} we need to show that $x_r$ and $z_s$ are non-zero. Following the computations in the first part of the proof we have
\[
y^\circ = y + \frac{\wtil{y} -\la}{1-\wtil{\al}}\al = y - \frac{\la}{1-\wtil{\al}}\al.
\]

First consider the case where $\la=0$. Then $y^\circ = y$ and since not all summands in $\wtil{y}_1$ and $\wtil{y}_2$ are zero it follows that $x_r$ and $z_s$ are non-zero. Thus Lemma \ref{L:Case (iii) prep} applies, including \eqref{factForm} which leads to \eqref{laId2} that further specifies to
\[
0=\la=\left(\frac{0}{z_s} + x_r \right)
\left(\frac{0}{x_r} + z_s\right)= x_r z_s.
\]
However, this identity cannot hold since $x_r,z_s\neq 0$, hence $\la$ cannot be zero.

Since $\la\neq 0$, for $i=1,2$ we have $\wtil{y}^\circ_i=\wtil{y}_i -\frac{\wtil{y}-\la}{1-\wtil{\al}}\wtil{\al}_i =-\frac{\la}{1-\wtil{\al}}\wtil{\al}_i\neq 0$, because $\wtil{\al}_i\neq 0$. Thus, not all summands in $\wtil{y}^\circ_1$ and $\wtil{y}^\circ_2$ are zero so that we again obtain that $x_r,z_s\neq 0$ and that Lemma \ref{L:Case (iii) prep} applies. Then, as shown above, via \eqref{factForm} we obtain that \eqref{laId2} holds which implies $z_s x_r$ is a non-zero solution to \eqref{rhoId1}. This in turn means that $f(\rho)$ in \eqref{f} must attain a positive value or zero at a number $\rho\neq 0$. However, in the case considered here $f(\rho)=(1-4\wtil{\al}_1 \wtil{\al}_2)\rho^2$ which is zero at $\rho=0$ and negative for all other values of $\rho$. Hence $y\not\in\fY_{\whatA}$.

Finally, it remains to consider the case where $\wtil{y}_1=\wtil{y}_2=0$ and $y_1=\cdots =y_{q-1}=0$ or $y_q=\cdots =y_{m}=0$. Say $y_1=\cdots =y_{q-1}=0$, the other case goes similarly. Although $\fE_2$ in $\eqref{fE2}$ is not equal to $\BR^m$ in this case, we claim that $y\in \fE_2$. Indeed, take $x_r=0$ and $z_s$ so that $x_k$ for $k\neq r$ are all determined by $y=E(z\otimes x)$, but the numbers $z_k$ for $k\neq s$ are free to be chosen, so that we can easily arrange $\sum_{k=1}^q z_k=0$ leading to $\OneVec_{nq}^T (z\otimes x)=0$. Hence $y\in \fE_2$, as claimed. Note further that $y=\left(I_m-\al\OneVec_m^T\right)y$, since $\OneVec_m^T y=\wtil{y}=0$. Therefore, for $x$ and $z$ as above we have
\[
\whatA (z\otimes x)= \left(I_m-\al\OneVec_m^T\right)E(z\otimes x) +\al \OneVec_{nq}^T (z\otimes x)
= \left(I_m-\al\OneVec_m^T\right) y =y.
\]
Hence $y\in\fY_{\whatA}$.
\end{proof}

Although $\fY_{\whatA}\neq \BR^m$ in case (iii) with $\BF=\BR$ and $4\wtil{\al}_1 \wtil{\al}_2> 1$, the explicit computation of $\fY_{\whatA}$ in this case still enables us to prove that positivity and complete positivity coincide in this case.

\begin{lemma}\label{L:Case (iii) inv2}
Assume $L$ is as in case (iii) of Lemma \ref{L:fE2=Fm} and $\sum_{k=1}^m \alpha_k \neq1$. Then $\cL$ is positive if and only if $\cL$ is completely positive.
\end{lemma}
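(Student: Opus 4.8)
The plan is to deduce the result from the Choi criterion together with the explicit description of $\fY_{\whatA}$ obtained in Lemma~\ref{L:Case (iii) inv}. Complete positivity of $\cL$ always implies positivity, so only the converse needs argument. Assume therefore that $\cL$ is positive. By Proposition~\ref{P:HillLBL} the Choi matrix factors as $\BL=\whatA^* \BH^T \whatA$, and as noted in the general observations following Proposition~\ref{P:cL-Pos}, positivity of $\cL$ is equivalent to $y^* \BH^T y\ge 0$ for every $y\in\fY_{\whatA}$. The goal is to upgrade this to $\BH^T\ge 0$: once we have that, $\BL=\whatA^*\BH^T\whatA\ge 0$, so $\cL$ is completely positive by the Choi criterion. (Alternatively one may appeal to Theorem~\ref{T:Hill}, noting that in a minimal Hill representation $\BH$ is automatically invertible since $\rank\BH=\rank\BL=m$, so $\BH^T\ge 0$ in fact forces $\BH>0$.)

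First I would dispatch the easy situations. If $\BF=\BC$, or if $\BF=\BR$ and $4\wtil{\al}_1\wtil{\al}_2\le 1$, then Lemma~\ref{L:Case (iii) inv} gives $\fY_{\whatA}=\BF^m$, so $y^* \BH^T y\ge 0$ holds for all $y\in\BF^m$ and $\BH^T\ge 0$ is immediate. The only remaining case is $\BF=\BR$ with $4\wtil{\al}_1\wtil{\al}_2>1$, where $\fY_{\whatA}$ is a proper subset of $\BR^m$ and a direct substitution argument no longer suffices.

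For that case I would use a density and continuity argument. According to Lemma~\ref{L:Case (iii) inv}, the complement of $\fY_{\whatA}$ in $\BR^m$ is contained in the linear subspace $\{\,y\in\BR^m : \sum_{k=1}^{q-1} y_k=0 \text{ and } \sum_{k=q}^{m} y_k=0\,\}$. In case (iii) we have $n,q>2$, so both index blocks are nonempty and this subspace is proper (of dimension $m-2$); hence it has empty interior and $\fY_{\whatA}$ is dense in $\BR^m$. Since $y\mapsto y^* \BH^T y$ is continuous and nonnegative on the dense set $\fY_{\whatA}$, it is nonnegative on all of $\BR^m$, so $\BH^T\ge 0$ and $\cL$ is completely positive as above. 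I do not expect a genuine obstacle here: the only delicate point is the density step, and it is essentially forced by Lemma~\ref{L:Case (iii) inv}, which has already localised the failure of $\fY_{\whatA}=\BR^m$ to a set lying inside a proper subspace.
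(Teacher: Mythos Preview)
Your proof is correct and follows essentially the same approach as the paper. The paper carries out the density/continuity step explicitly by perturbing a point $y\notin\fY_{\whatA}$ along the line $y+\mu(e_1+e_m)$ (so that both partial sums become nonzero for $\mu\neq 0$) and letting $\mu\to 0$ in the expanded quadratic form, whereas you phrase the same idea abstractly via density of $\fY_{\whatA}$; the substance is identical.
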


\begin{proof}
By Lemma \ref{L:Case (iii) inv} we need only consider the case where $\BF=\BR$ and $4\wtil{\al}_1 \wtil{\al}_2> 1$. We need to show that $y^T \BH y\geq 0$ for all $y\in \fY_{\whatA}$ implies $\BH\geq 0$, where $\BH$ is the Hill matrix associated with the choice of $L_1,\ldots,L_m$. Let $y\not\in \fY_{\whatA}$. Then $\wtil{y},\wtil{y}_1,\wtil{y}_2$ in \eqref{tilaly} satisfy $\wtil{y}=\wtil{y}_1=\wtil{y}_2=0$. Then for $\mu\neq 0$ we have $y+\mu(e_1+e_m)\in \fY_{\whatA}$ so that
\begin{align*}
&y^T \BH y + \mu \left(\left(e_1+e_m\right)^T\BH y+ y^T \BH \left(e_1+e_m\right)\right) + \mu^2 \left(e_1+e_m\right)^T \BH \left(e_1+e_m\right)=\\
&\qquad\qquad\qquad =  \left(y+\mu\left(e_1+e_m\right)\right)^T\BH \left(y+\mu\left(e_1+e_m\right)\right) \geq 0.
\end{align*}
Since this inequality holds for each $\mu\neq 0$, it must also hold for $\mu=0$, so that also $y^T \BH y\geq 0$ for $y\not \in \fY_{\whatA}$ and we can conclude that $\BH\geq 0$.
\end{proof}

\subsection{The case where $L_{ij}=L_0$ for all $(i,j)\neq (i_k,j_k)$ and $\sum_{k=1}^m \al_k= 1$.}

Next we consider the case where $\sum_{k=1}^m \al_k= 1$, so that the matrix $I_m-\al \OneVec_{m}^T$ is not invertible. We distinguish between the case where $\fE_2$ in \eqref{fE2} is equal to $\BF^m$, so that (C1) and one of (C2.1)--(C2.7) holds, and the case where (C1) holds but $\fE_2\neq \BF^m$, so that $L$ must be as in (i), (ii) or (iii) in Lemma \ref{L:fE2=Fm}. For $L$ as in (i) and (ii) the result is already proved in Lemma \ref{L:Case (i) and (ii)}. The remaining cases are proved in a series of lemmas; in some of the proofs the condition $\sum_{k=1}^m \al_k= 1$ is not required, while in others it is.

\begin{lemma}\label{L:C2.1&C2.2}
Assume condition (C1) in \eqref{C1} together with (C2.1) or (C2.2) holds. Then $\fY_{\whatA}=\BF^m$.
\end{lemma}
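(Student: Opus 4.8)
The plan is to use the second expression $\whatA=(I_m-\al\OneVec_m^T)E+\al\OneVec_{nq}^T$ from \eqref{whatAforms1} together with the observation that, after reordering rows (resp.\ columns) via Lemma~\ref{L:Perm}/Remark~\ref{R:Reorder}, condition (C2.1) (resp.\ (C2.2)) means that the $n$-th row (resp.\ $q$-th column) of $L$ carries only $L_0$, i.e.\ $n\notin\{i_1,\ldots,i_m\}$ (resp.\ $q\notin\{j_1,\ldots,j_m\}$). I carry out the argument for (C2.1), the case (C2.2) being analogous with the roles of $x$ and $z$ (equivalently, rows and columns of $L$) interchanged. Under this normalization $E(z\otimes x)$ does not involve $x_n$, whereas $\OneVec_{nq}^T(z\otimes x)=(\OneVec_q^T z)(\OneVec_n^T x)$ does, so for any fixed $z$ with $\OneVec_q^T z\neq0$, letting $x_n$ range over $\BF$ makes $\whatA(z\otimes x)$ sweep the whole line $(I_m-\al\OneVec_m^T)E(z\otimes x)+\BF\al$.

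Two auxiliary facts would then be isolated. First, since (C1) and (C2.1) hold, Lemma~\ref{L:fE2=Fm} gives $\fE_2=\BF^m$; and since $\whatA(z\otimes x)=(I_m-\al\OneVec_m^T)E(z\otimes x)$ whenever $\OneVec_{nq}^T(z\otimes x)=0$, the image of $\whatA$ over this set is precisely $(I_m-\al\OneVec_m^T)\fE_2=\im(I_m-\al\OneVec_m^T)$. Second, for every $w\in\BF^m$ with $\OneVec_m^T w\neq0$ there exist $x,z$ with $E(z\otimes x)=w$ and $\OneVec_q^T z\neq0$; to see this I would split into two cases, using (C1). If some column of $L$ contains at least two of the $L_k$, then by (C1) those $L_k$ lie in pairwise distinct rows, the corresponding entry of $z$ is a free nonzero scaling parameter in the construction of Lemma~\ref{L:fE1=Fm}, and it can be chosen so that $\OneVec_q^T z\neq0$. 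If every column contains at most one of the $L_k$, then all $j_k$ are distinct and the ``all case~1'' instance of the construction in the proof of Lemma~\ref{L:fE1=Fm} produces $z_{j_k}=w_k$, hence $\OneVec_q^T z=\sum_k w_k=\OneVec_m^T w\neq0$.

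The statement then follows from a case split on $\OneVec_m^T y$. If $\OneVec_m^T y=0$, then $y$ lies in the kernel of $\OneVec_m^T$, which by Lemma~\ref{L:Ipert} equals $\im(I_m-\al\OneVec_m^T)$ when $\sum_k\al_k=1$ (it equals $\BF^m$ when $\sum_k\al_k\neq1$, a situation already handled by Lemma~\ref{L:SumAl_k=1}); so $y\in\fY_{\whatA}$ by the first auxiliary fact. If $\OneVec_m^T y\neq0$, apply the second auxiliary fact with $w=y$ to obtain $z^0,x^0$ with $E(z^0\otimes x^0)=y$ and $\OneVec_q^T z^0\neq0$; replacing $x^0$ by $x^0+te_n$ does not change $E$, and as $t$ varies $\whatA(z^0\otimes x)$ runs over $(I_m-\al\OneVec_m^T)y+\BF\al$, which contains $y$ because $y-(I_m-\al\OneVec_m^T)y=(\OneVec_m^T y)\al\in\BF\al$; taking $t$ with $(\OneVec_q^T z^0)(\OneVec_n^T x^0+t)=\OneVec_m^T y$ gives $\whatA(z^0\otimes x)=y$. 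Hence $\fY_{\whatA}=\BF^m$.

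The point requiring most care is the second auxiliary fact: a representation $w=E(z\otimes x)$ with $\OneVec_q^T z\neq0$ need not exist when $\OneVec_m^T w=0$ (for instance $w=(1,-1)$ in a $1\times2$ block example), so the restriction to $\OneVec_m^T w\neq0$ is essential, and the case split must be arranged so that the scalings applied inside the construction of Lemma~\ref{L:fE1=Fm} never force two distinct assignments to the same entry of $x$ or $z$ — which is exactly the content of (C1) (a shared column forces pairwise distinct rows, and symmetrically). The remaining steps are just bookkeeping around \eqref{whatAforms1} and Lemma~\ref{L:Ipert}.
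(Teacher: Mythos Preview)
Your proposal is correct and rests on the same core mechanism as the paper: exploit the coordinate $x_{i_0}$ (resp.\ $z_{j_0}$) that does not appear in $E(z\otimes x)$ to freely adjust the scalar multiplying $\alpha$ in \eqref{whatAforms1}. The organisation differs somewhat. The paper writes $y=E(z\otimes x)$ via $\fE_1=\BF^m$ and then runs a three-case analysis on the configuration of the $(i_k,j_k)$ to guarantee that the relevant sum ($\sum_i x_i$ in their (C2.2) setting) can be made nonzero, or that it vanishes simultaneously with $\sum_k y_k$. You instead split on $\OneVec_m^T y$: for $\OneVec_m^T y=0$ you invoke Lemma~\ref{L:fE2=Fm} (so $\fE_2=\BF^m$) together with Lemma~\ref{L:Ipert} to conclude $y\in(I_m-\alpha\OneVec_m^T)\BF^m\subset\fY_{\whatA}$, and for $\OneVec_m^T y\neq 0$ you prove your Fact~2 via a two-case dichotomy (some column carries $\geq 2$ selected blocks, or all $j_k$ are distinct). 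Your Case~A works because (C1) forces all $L_k$ in a shared column to have unique $i_k$, so rescaling $z_{j^*}$ and the matching $x_{i_k}$ never collides with the assignments for the remaining $k$; your Case~B is the ``all case~1'' branch of the construction in Lemma~\ref{L:fE1=Fm}, giving $\OneVec_q^T z=\OneVec_m^T w\neq 0$ directly. This reuse of Lemma~\ref{L:fE2=Fm} makes your argument slightly more modular than the paper's self-contained three cases, but the two proofs are really the same idea packaged differently.
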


\begin{proof}[\bf Proof]
We give a proof for the case where (C1) and (C2.2) holds. The case where (C2.2) is replaced by (C2.1) is proved analogously.

By assumption there exists a $j_0$ such that $j_k\neq j_0$ for all $k$. This implies that $E\left(b e_{j_0}\otimes x\right)=0$ for all $b\in\BF$ and all $x\in\BF^n$. Let $y\in \BF^m$. By Lemma \ref{L:fE1=Fm} there exist $z\in\BF^q$ and $x\in\BF^n$ so that $y= E(z\otimes x)$. Then, for any $b\in\BF$ we have
\begin{align*}
\whatA \left(\left(z + b e_{j_0}\right)\otimes x\right) &= E\left(\left(z + b e_{j_0}\right)\otimes x\right)+\al\left(\OneVec_{nq}^T-\OneVec_{m}^T E\right) \left(\left(z + b e_{j_0}\right)\otimes x\right)\\
&=y +\al \left(\OneVec_{nq}^T\left(\left(z + b e_{j_0}\right)\otimes x\right) - \OneVec_{m}^T y\right)\\
&= y +\al \mbox{$\left(\left(\sum_{j=1}^q z_j\right)\left(\sum_{i=1}^n x_i\right)+b\left(\sum_{i=1}^n x_i\right) -\left(\sum_{k=1}^m y_k\right)\right)$}.
\end{align*}
To prove our claim we will show that for every $y$ we can find $x$, $z$ and $b$ so that the second term becomes 0. Note that this can always be arranged if we have $y= E(z\otimes x)$ with $\sum_{i=1}^n x_i \neq 0$. In that case, simply take
\begin{equation}\label{b}
b=\frac{\sum_{k=1}^m y_k - \left(\sum_{j=1}^q z_j\right)\left(\sum_{i=1}^n x_i\right)}{\left(\sum_{i=1}^n x_i\right)}.
\end{equation}
First we consider a case where this may not happen, after which we show that in all remaining cases it is possible to have $\sum_{i=1}^n x_i \neq 0$.

\paragraph{\bf Case 1}
Assume that $i_k\neq i_l$ for all $k\neq l$. In this case $x$ and $z$ with $y= E(z \otimes x)$ can be obtained by setting $z_j=1$ for all $j$, $x_{i_k}=y_k$ and $x_i=0$ for $i\ne i_1, \ldots, i_m.$ Then $\sum_{k=1}^m y_k=\sum_{i=1}^n x_i$. Then, in case $\sum_{k=1}^m y_k\neq 0$, also $\sum_{i=1}^n x_i\neq 0$ and we can take $b$ as in \eqref{b}. In case $\sum_{k=1}^m y_k=0$, we also have $\sum_{i=1}^n x_i= 0$ and thus
\[
\left(\sum_{j=1}^q z_j\right)\left(\sum_{i=1}^n x_i\right)+b\left(\sum_{i=1}^n x_i\right) -\left(\sum_{k=1}^m y_k\right) = 0
\]
irrespectively of the choice of $b$. Hence both for $\sum_{k=1}^m y_k\neq 0$ and $\sum_{k=1}^m y_k=0$ we can find $z$ and $x$ with $y=\whatA (z\otimes x)$.

\paragraph{\bf Case 2}
Assume that $j_k\neq j_l$ for all $k\neq l$. In that case take $x_i=1$ for all $i$, $z_{j_k}=y_k$ for all $k$ and $z_j=0$ for $j\neq j_1,\ldots,j_m$. Then $y=E(z\otimes x)$ and $\sum_{i=1}^n x_i=n\neq 0$ so that we can take $b$ as in \eqref{b}.

\paragraph{\bf Case 3}
Assume we are neither in Case 1 nor in Case 2. Since (C1) holds this implies that, after possibly rearranging indices of $L_1,\ldots,L_m$ as well as block columns and rows in $L$ (see Remark \ref{R:Reorder}), there exist $s,r>1$ so that $i_k=k$ and $j_k=1$ for $k=1,\ldots,r$, $i_{r+l}=r+1$ and $j_{r+l}=l+1$ for $l=1,\ldots,s$ and $i_{r+s+1}=r+2$, $j_{r+s+1}=s+2$, while $i_p\neq 1,\ldots, r+2$ and $j_p\neq 1,\ldots, s+2$ for $p> r+s+1$. Let $y\in\BF^m$ be given as $y=E(z \otimes x)$ for some $z \in \BF^q$ and $x \in \BF^n$. Then $y_k= x_k z_1$ for $k=1,\ldots,r$, $y_{r+l}= x_{r+1} z_{1+l}$ for $l=1,\ldots,s$ and $y_{r+s+1}=x_{r+2}z_{s+2}$, while $z_1,\ldots,z_{s+2},x_1,\ldots,x_{r+2}$ do not occur in the factorisations $y_p=x_{i_p}z_{j_p}$ for $p>r+s+1$. As we did in the proof of Lemma \ref{L:fE2=Fm} for (C2.7), we will adjust $z$ and $x$ while maintaining $y=E(z \otimes x)$ and in this case arranging $\sum_{i=1}^n x_i \neq 0$. Redefine $z_1:=1$, $x_i:=y_i$ for $i=1,\ldots,r$, select $0\neq x_{r+1} \neq -\sum_{i={r+2}}^n x_i - \sum_{k=1}^r y_k$ and redefine $z_{1+j}:=y_{r+j}/ x_{r+1}$ for $j=1,\ldots,s$. In that case we still have $y_k= x_{i_k}y_{j_k}$ for $k=1,\ldots,s+r+1$ while the other identities remain unaffected and also
\[
\sum_{i=1}^n x_i = \sum_{k=1}^r y_k + x_{r+1} + \sum_{i=r+2}^n x_i \neq 0.
\]
Hence, for these redefined $x$ and $z$ we can take $b$ as in \eqref{b}.
\end{proof}

\begin{lemma}\label{L:C2.5&C2.6}
Assume condition (C1) in \eqref{C1} together with (C2.5) or (C2.6) holds. Assume also that $\sum_{k=1}^m\al_k = 1$. Then $\fY_{\whatA}=\BF^m$.
\end{lemma}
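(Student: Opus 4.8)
The plan is to produce, for every target vector, an explicit preimage under the bilinear map $(z,x)\mapsto\whatA(z\otimes x)$, using the explicit form $\whatA=(I_m-\al\OneVec_m^T)E+\al\OneVec_{nq}^T$ from \eqref{whatAforms1}. Since here $\sum_{k=1}^m\al_k=1$, the matrix $I_m-\al\OneVec_m^T$ is \emph{singular} by Lemma \ref{L:Ipert}, so the inversion trick used in Lemma \ref{L:SumAl_k=1} is not available; a direct construction is needed instead. The computational backbone is the identity
\[
\whatA(z\otimes x)=(I_m-\al\OneVec_m^T)w+\al\la=w+\bigl(\la-\textstyle\sum_{k=1}^m w_k\bigr)\al,
\]
where $w:=E(z\otimes x)$ has entries $w_k=x_{i_k}z_{j_k}$ (by \eqref{E(z,x)}) and $\la:=\OneVec_{nq}^T(z\otimes x)=\bigl(\sum_i x_i\bigr)\bigl(\sum_j z_j\bigr)$ (by \eqref{OneVecCon}). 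Thus realizing a given $y$ amounts to choosing $x,z$ so that $w\in y+\operatorname{span}\{\al\}$ while the scalar correction $\la-\sum_k w_k$ cancels the $\al$-component that was introduced.

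\textbf{Main construction.} By interchanging the roles of rows and columns (equivalently of $x$ and $z$, and of $n$ and $q$) it suffices to treat the case where (C2.6) holds, i.e.\ $j_1,\dots,j_m$ are pairwise distinct (note (C1) is then automatic). Fix $y\in\BF^m$, put $\sigma:=\sum_{k=1}^m y_k$, and choose the free parameter $t:=-(n-1)\sigma/n$. Take $x:=\OneVec_n$, set $z_{j_k}:=y_k+t\al_k$ for $k=1,\dots,m$ (a consistent assignment precisely because the $j_k$ are distinct), and set the remaining coordinates $z_j:=0$. Then $w_k=x_{i_k}z_{j_k}=y_k+t\al_k$, so $w=y+t\al$; using $\sum_k\al_k=1$ one gets $\sum_k w_k=\sigma+t$, $\sum_j z_j=\sigma+t$, and hence $\la=n(\sigma+t)$. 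Substituting into the displayed identity yields $\whatA(z\otimes x)=y+\bigl(nt+(n-1)\sigma\bigr)\al$, which is $y$ by the choice of $t$. Since the coefficient $1+(n-1)\cdot 1=n$ is nonzero, $t$ can always be chosen — and this is exactly where $\sum_k\al_k=1$ (rather than $\ne 1$) is used. As $y$ was arbitrary, $\fY_{\whatA}=\BF^m$. For (C2.5) one uses the mirror-image construction: $z:=\OneVec_q$, $x_{i_k}:=y_k+t'\al_k$ with $t':=-(q-1)\sigma/q$, and $x_i:=0$ for the remaining $i$.

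\textbf{Main obstacle and points to be careful about.} There is no serious difficulty once one spots that setting $x=\OneVec_n$ (resp.\ $z=\OneVec_q$) collapses each $w_k$ to a single coordinate of $z$ (resp.\ $x$), which — by the distinctness of the $j_k$ (resp.\ $i_k$) supplied by (C2.6) (resp.\ (C2.5)) — may be prescribed freely; all that remains is the single scalar equation in $t$, solvable because $n\ne0$ (resp.\ $q\ne0$). The only items worth spelling out in the writeup are: (a) why the assignment $z_{j_k}:=y_k+t\al_k$ is well defined (distinctness of the $j_k$), and the harmless bookkeeping of the unused coordinates $z_j$ with $j\notin\{j_1,\dots,j_m\}$ when $m<q$, which are set to $0$ and affect neither $\sum_j z_j$ nor any $w_k$; and (b) that (C1) and the "$q=1$"/"$n=1$" provisos in (C2.5)/(C2.6) are not actually needed — in those boundary cases the parameter becomes $t=0$ and the construction still works (and these cases are in any event covered by Lemma \ref{L:Case (i) and (ii)}).
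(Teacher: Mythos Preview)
Your proof is correct, and the construction is clean. It differs from the paper's argument in a meaningful way, so a brief comparison is warranted.

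The paper handles (C2.6) by first reducing to the case $m=q$ (when $m<q$ condition (C2.2) is also satisfied, so Lemma~\ref{L:C2.1&C2.2} applies). Writing $\whatA(z\otimes x)=\whatA(I_q\otimes x)\,z$, the paper then shows that for $x=\OneVec_n$ the square matrix $\whatA(I_q\otimes x)$ is invertible, via the Sherman--Morrison formula applied to $E(I_q\otimes x)+\al v^T(I_q\otimes x)$; the invertibility criterion collapses to $n\sum_k\al_k\ne 0$, which holds since $\sum_k\al_k=1$. Surjectivity onto $\BF^m$ then follows at once.

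Your approach is more direct: you bypass both the reduction to $m=q$ and the Sherman--Morrison analysis by exhibiting, for each $y$, an explicit preimage. The key identity $\whatA(z\otimes x)=w+(\la-\sum_k w_k)\al$ with $w=E(z\otimes x)$ reduces the problem to a single scalar equation in the free parameter $t$, and distinctness of the $j_k$ is exactly what makes the assignment $z_{j_k}:=y_k+t\al_k$ well defined. Both arguments end up choosing $x=\OneVec_n$, and both use $\sum_k\al_k=1$ to obtain the nonzero coefficient $n$; the difference is that the paper packages this as an invertibility statement while you solve the resulting linear equation by hand. Your route is slightly more elementary and handles $m<q$ uniformly without a separate reduction. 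One minor quibble: your parenthetical remark that this is ``exactly where $\sum_k\al_k=1$ is used'' is a bit imprecise --- your construction in fact works whenever $1+(n-1)\sum_k\al_k\ne 0$, so $\sum_k\al_k=1$ is sufficient but not the only value that would do.
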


\begin{proof}[\bf Proof]
We give a proof of the case where (C1) and (C2.6) hold. For (C1) and (C2.5) a similar argument applies. Hence, assume (C1) and (C2.6) hold. Note that (C2.6) in fact implies (C1). Also, we have $m\leq q$. However, if $m<q$, then also (C2.2) holds and we obtain that $\fY_{\whatA}=\BF^m$ from Lemma \ref{L:C2.1&C2.2}. Hence we may assume that $m=q$ and, possibly after relabeling $L_1,\ldots,L_m$ and rearranging block rows (see Remark \ref{R:Reorder}), that $j_k=k$ for $k=1,\ldots,m$. Note that we can write
\[
\whatA (z\otimes x) = \whatA (I_q \otimes x) z.
\]
Hence, we obtain that $\fY_{\whatA}=\BF^{m}$ in case we can find a $x\in\BF^n$ so that $\whatA (I_q \otimes x)\in\BF^{m\times m}$ has full row rank, or, equivalently, is invertible.

We have $E (I_q \otimes x)=\diag(x_{i_1},\ldots, x_{i_m})=E^T(I_q \otimes x)^T$, which is invertible precisely when $x_{i_k}\neq 0$ for all $k$, and
\begin{align*}
v^T\left(I_q \otimes x\right) & =\sum_{i=1}^n x_i \OneVec_q^T -\OneVec_q^T E \left(I_q \otimes x\right).
\end{align*}
Restricting to $x_{i_k}\neq 0$ for all $k$ and applying the Sherman–Morrison formula \cite{H89} to
\[
\whatA \left(I_q \otimes x\right)=E\left(I_q \otimes x\right)+ \al v^T \left(I_q \otimes x\right)
\]
it follows that $\whatA (I_q \otimes x)$ is invertible if and only if
\begin{align*}
-1 & \neq v^T \left(I_q \otimes x\right) \left(E\left(I_q \otimes x\right)\right)^{-1} \al
= \sum_{i=1}^n x_i \OneVec_q^T \left(E\left(I_q \otimes x\right)\right)^{-1} \al  -\OneVec_q^T  \al \\
&= \sum_{i=1}^n x_i \mat{x_{i_1}^{-1} & \cdots & x_{i_m}^{-1}} \al - \sum_{k=1}^m \al_k
= \sum_{i=1}^n x_i \mat{x_{i_1}^{-1} & \cdots & x_{i_m}^{-1}} \al -1.
\end{align*}
Thus we need to find an $x\in \BF^n$ with $x_{i_k}\neq 0$ for all $k$ so that
\[
\sum_{i=1}^n x_i \mat{x_{i_1}^{-1} & \cdots & x_{i_m}^{-1}} \al \neq 0.
\]
This can simply be done by selecting $x_i=1$ for all $i$, giving $n$ on the left hand side, since by assumption $\sum_{k=1}^m\al_k = 1$. Hence we can choose $x$ so that $\whatA(I_q \otimes x)$ is invertible, which implies $\fY_{\whatA}=\BF^{m}$.
\end{proof}

\begin{lemma}\label{L:C2.3&C2.4}
Assume condition (C1) in \eqref{C1} together with (C2.3) or (C2.4) holds.  Assume also that $\sum_{k=1}^m\al_k = 1$. Then $\fY_{\whatA}=\BF^m$.
\end{lemma}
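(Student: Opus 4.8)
The plan is to mirror the structure of the preceding lemmas while exploiting that, under $\sum_{k=1}^m\al_k=1$, the matrix $I_m-\al\OneVec_m^T$ has one--dimensional kernel $\tu{span}\{\al\}$ and cokernel $\tu{span}\{\OneVec_m\}$ (Lemma~\ref{L:Ipert}); note also $\al\neq 0$, since otherwise $\sum_k\al_k=0\neq1$. By the transpose symmetry of the problem -- transposing $L$ together with its blocks (Remark~\ref{R:Reorder}) interchanges the roles of $x$ and $z$ and of (C2.3) and (C2.4), while leaving $\fY_{\whatA}$ unchanged -- it suffices to treat the case where (C2.4) holds. Moreover, if in addition (C2.1) or (C2.2) holds we are done by Lemma~\ref{L:C2.1&C2.2}, and if (C2.5) or (C2.6) holds we are done by Lemma~\ref{L:C2.5&C2.6} (here $\sum_k\al_k=1$ is available); so we may assume that among (C2.1)--(C2.7) only (C2.4) holds, i.e.\ $\neg$(C2.1), $\neg$(C2.2), $\neg$(C2.5), $\neg$(C2.6). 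Combined with (C1) this pins down the block pattern of $L$: there are two distinct block columns, each containing at least two of the $L_k$'s, whose rows are forced by (C1) to be otherwise empty, together with the remaining $L_k$'s distributed over the remaining rows and columns in a way still satisfying (C1).

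Using $\whatA=(I_m-\al\OneVec_m^T)E+\al\OneVec_{nq}^T$ from \eqref{whatAforms1} together with \eqref{OneVecCon}, any $y=\whatA(z\otimes x)$ satisfies $\OneVec_m^Ty=(\OneVec_q^Tz)(\OneVec_n^Tx)$ (apply $\OneVec_m^T$ and use $\OneVec_m^T\al=1$). Conversely, writing $u:=E(z\otimes x)$ and $\la:=(\OneVec_q^Tz)(\OneVec_n^Tx)$ gives $\whatA(z\otimes x)=u+(\la-\OneVec_m^Tu)\al$, so $\whatA(z\otimes x)=y$ holds precisely when $\la=\OneVec_m^Ty=:\wtil{y}$ and $u=y-t\al$ for a scalar $t$ (and any $t$ is admissible, since then $\OneVec_m^Tu=\wtil{y}-t$ automatically). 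Thus it remains to show: for every $y$ there exist $t\in\BF$, $z\in\BF^q$, $x\in\BF^n$ with $E(z\otimes x)=y-t\al$ and $(\OneVec_q^Tz)(\OneVec_n^Tx)=\wtil{y}$. For $\wtil{y}=0$ this is immediate, since $\fE_2=\BF^m$ by Lemma~\ref{L:fE2=Fm} (as (C1) and (C2.4) hold), so $y$ itself equals $E(z\otimes x)$ with $(\OneVec_q^Tz)(\OneVec_n^Tx)=0$ and $t=0$ works; this already yields $\fY_{\whatA}\supseteq\{v:\OneVec_m^Tv=0\}$, and one is left to hit each hyperplane $\{y:\OneVec_m^Ty=\wtil{y}\}$ with $\wtil{y}\neq0$.

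For $\wtil{y}\neq0$ the plan is to solve $E(z\otimes x)=y-t\al$ explicitly. After the normalization of (C2.4) as in the proof of Lemma~\ref{L:fE2=Fm}, the two distinguished block columns, say $1$ and $2$, carry the components $k$ with $j_k=1$ resp.\ $j_k=2$, each with distinct and otherwise--unused rows, while the components $p$ with $j_p\notin\{1,2\}$ can be solved one at a time exactly as in the proof of Lemma~\ref{L:fE1=Fm}, using (C1) for those indices. Taking the two column values $z_1,z_2$ together with one scaling parameter for each remaining "block" of the decomposition as free nonzero parameters, the remaining entries of $x$ are then fixed by division, and $\OneVec_q^Tz$ and $\OneVec_n^Tx$ become, respectively, an affine function and a Laurent--type function of these parameters. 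Substituting into $(\OneVec_q^Tz)(\OneVec_n^Tx)=\wtil{y}$ and clearing denominators reduces the task to finding a nonzero root of a quadratic (or, in degenerate sub-cases, linear) polynomial in one free parameter, whose other coefficients can be adjusted via the remaining free parameters and via $t$.

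The main obstacle is the case $\BF=\BR$: in the genuinely quadratic case one must choose the tunable coefficients so that the discriminant is nonnegative (for instance so that the product of the leading and constant coefficients is $\le0$) and the selected root is nonzero, and one must avoid the finitely many values of $t$ for which some entry of $y-t\al$ that should be nonzero vanishes. It is precisely here that $\neg$(C2.1), $\neg$(C2.2), $\neg$(C2.5), $\neg$(C2.6) enter, guaranteeing that enough independent free parameters are present to carry this out. Over $\BF=\BC$ the argument is unconditional, since a non-constant rational function takes every value and a quadratic with nonzero constant term has a nonzero root. Once $t,z,x$ are produced, $\whatA(z\otimes x)=y$ by construction, so $\fY_{\whatA}=\BF^m$.
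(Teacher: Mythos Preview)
Your approach is essentially the paper's: reduce to (C2.4), and from $\whatA=(I_m-\al\OneVec_m^T)E+\al\OneVec_{nq}^T$ with $\OneVec_m^T\al=1$ deduce that $\whatA(z\otimes x)=y$ forces $\OneVec_{nq}^T(z\otimes x)=\OneVec_m^Ty=:\wtil{y}$ and $E(z\otimes x)=y-t\al$ for a free scalar $t$. This reformulation is cleaner than the paper's (which leaves the fact that $\la$ is determined implicit), and your treatment of $\wtil{y}=0$ via $\fE_2=\BF^m$ is correct.

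The gap is that for $\wtil{y}\neq0$ you give only a plan. The paper, after excluding just (C2.5), pins down a third ``block'' --- a single row carrying $p>1$ entries in fresh columns $3,\dots,2+p$ --- which supplies the third free nonzero variable $x_{r+s+1}$ alongside $z_1,z_2$. It then derives the explicit quadratic $a(z_2)z_1^2+b(z_2)z_1+c(z_2)=0$ and, over $\BR$, must split into sub-cases on $\la_1:=\sum_{i>r+s+1}x_i$ and $\la_2:=\sum_{j>2+p}z_j$: for $\la_1\neq1$ the discriminant has degree $4$ in $z_2$ against a degree-$3$ term and is eventually positive; for $\la_1=1$, $\la_2\neq\la$ the constant term of the discriminant is strictly positive; but for $\la_1=1$ and $\la_2=\la$ one must first go back and modify tail entries of $x$ or $z$ (invoking (C1) once more on an index $k>r+s+p$) to destroy one of these equalities. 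Your sentence ``enough independent free parameters are present to carry this out'' does not account for this last sub-case, which is not a parameter count but a separate modification step, and without it the argument over $\BR$ is incomplete. A minor point: the symmetry exchanging (C2.3) and (C2.4) is genuine, but it is not an instance of Remark~\ref{R:Reorder}, which only covers block permutations, not the row/column transposition you invoke.
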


\begin{proof}[\bf Proof]
We give a proof of the case where (C1) and (C2.4) hold. For (C1) and (C2.3) a similar argument applies. Hence, assume (C1) and (C2.4) hold. Since the claim is proved for (C1) together with (C2.5) in Lemma \ref{L:C2.5&C2.6} we will exclude condition (C2.5).

As in the proof of Lemma \ref{L:fE2=Fm} we note that there exist $r,s>1$ (but $r+s \le n$) so that, possibly after rearranging the indices of $L_1,\ldots,L_m$ as well as block columns and rows in $L$ (see Remark \ref{R:Reorder}), $j_k=1$ and $i_k=k$ for $k=1,\ldots,r$ and $j_{r+l}=2$ and $i_{r+l}=r+l$ for $l=1,\ldots,s$, while $j_k\neq 1,2$ and $i_k\neq 1,\ldots, r+s$ whenever $k>r+s$. Furthermore, since (C2.5) is excluded we may assume that there exists a $p>1$ such that $i_{r+s+i}=i_{r+s+1}$ and $j_{r+s+i}=2+i$ for $i=1,\ldots,p$, while $j_k\neq 1,\ldots, 2+p$ and $i_k\neq 1,\ldots, r+s+1$ whenever $k>r+s+p$.

Now fix a $y^\circ\in\BF^m$. Using Lemma \ref{L:Ipert} and \eqref{fE1} it follows that there exist $y\in\BF^m$ and  $\la\in\BF$ so that $y^\circ = \left(I_m - \alpha \OneVec_m^T\right)y + \al \la$ and for this $y$ there exist $z \in \BF^q$ and $x \in \BF^n$ so that $y=E(z \otimes x)$. Then $y_k= x_k z_1$ for $k=1,\ldots,r$ and $y_{r+k}= x_{r+k} z_2$ for $k=1,\ldots,s$ and $y_{r+s+k}= x_{r+s+1} z_{2+k}$ for $k=1,\ldots,p$, while $z_1,\ldots,z_{2+p},x_1,\ldots,x_{r+s+1}$ do not occur in the factorisations $y_k=x_{i_k}z_{j_k}$ for $k>r+s+p$. We claim that we can adjust $z$ and $x$ in such a way that still $y=E(z \otimes x)$ and in addition $\OneVec_{nq}^T(z \otimes x)=\la$. Once this is achieved, we obtain that
\[
y^\circ = \left(I_m - \alpha \OneVec_m^T\right)y + \al \la = \left(I_m - \alpha \OneVec_m^T\right)E(z \otimes x) + \al \OneVec_{nq}^T(z \otimes x)=\whatA (z \otimes x),
\]
and we may conclude that $\fY_{\whatA}=\BF^m$, since $y^\circ$ was chosen arbitrarily.

To see that $z$ and $x$ can be adjusted in the desired way, we will only modify  $z_1,z_2,x_{r+s+1}$, subject to $z_1,z_2,x_{r+s+1}\neq 0$, so that $x_k$ for $k=1,\ldots,r+s$ and $z_k$ for $k=3,\ldots,2+p$ are fixed by the above equations while the other values of $x_i$ and $z_j$ remain unchanged. This guarantees that $y=E(z \otimes x)$ remains true. Set
\begin{align*}
\wtil{y}_1:=\sum_{k=1}^r y_k,\quad \wtil{y}_2:=\sum_{k=r+1}^{r+s} y_k, \quad \wtil{y}_3:=\sum_{k=r+s+1}^{r+s+p} y_k,\\
\la_1:=\sum_{i=r+s+2}^n x_{i},\quad
\la_2:=\sum_{j=2+p+1}^q z_{j}.
\end{align*}
Then
\begin{align*}
\sum_{i=1}^n x_{i} &= \sum_{i=1}^{r+s} x_{i} + x_{r+s+1} + \sum_{i=r+s+2}^{n} x_{i}
= \sum_{i=1}^{r} \frac{y_i}{z_1} + \sum_{i=r+1}^{r+s} \frac{y_i}{z_2} + x_{r+s+1} +\la_1\\
&=\frac{\wtil{y}_1}{z_1}+ \frac{\wtil{y}_2}{z_2}+ x_{r+s+1} +\la_1,
\end{align*}
and via a similar computation
\[
\sum_{j=1}^q z_{j}=z_1+z_2 +\frac{\wtil{y}_3}{x_{r+s+1}} +\la_2.
\]
Thus the objective is to find $z_1,z_2,x_{r+s+1}\neq 0$ so that
\[
\left(\frac{\wtil{y}_1}{z_1}+ \frac{\wtil{y}_2}{z_2}+ x_{r+s+1} +\la_1\right)
\left(z_1+z_2 +\frac{\wtil{y}_3}{x_{r+s+1}} +\la_2\right)=\la.
\]
In case one of $\wtil{y}_1,\wtil{y}_2,\wtil{y}_3$ is zero, this ensures that one of the variables only appears in one of the two factors, after which the objective is easy to achieve.  When $\la=0$, $z_1$ and $z_2$ can be chosen (with $z_1,z_2\neq 0$) so that the second factor becomes zero, and the identity is satisfied. Thus, assume $\la,\wtil{y}_1,\wtil{y}_2,\wtil{y}_3\neq 0$.

Now we select $x_{r+s+1}$ so that the first factor becomes equal to 1, i.e., $x_{r+s+1}=1-\la_1 -\frac{\wtil{y}_1}{z_1} - \frac{\wtil{y}_2}{z_2}$, and insert this into the second factor so that we arrive at
\[
\la=z_1+z_2 +\frac{\wtil{y}_3}{x_{r+s+1}} +\la_2= z_1+z_2 +\frac{ \wtil{y}_3 z_1z_2}{(1-\la_1)z_1z_2 -\wtil{y}_1 z_2 - \wtil{y}_2 z_1} +\la_2,
\]
keeping in mind the conditions $z_1,z_2\neq 0$, $\la_1 +\frac{\wtil{y}_1}{z_1} + \frac{\wtil{y}_2}{z_2}\neq 1$. Multiplying with $(1-\la_1)z_1z_2 -\wtil{y}_1 z_2 - \wtil{y}_2 z_1$ on both sides this leads to a second order polynomial equation in $z_1$ with parameters depending on $z_2$:
\[
a(z_2) z_1^2  + b(z_2) z_1  + c(z_2)=0
\]
with
\begin{align*}
a(z_2)=z_2\left(1-\la_1\right)&-\wtil{y}_2,\quad c(z_2)= z_2 \left(\la -\la_2\right)\wtil{y}_1-z_2^2 \wtil{y}_1,\\
b(z_2)=z_2^2\left(1-\la_1\right) + z_2&\left(\wtil{y}_3-\wtil{y}_1-\wtil{y}_2 -\left(1-\la_1\right)\left(\la-\la_2 \right)\right)+\wtil{y}_2\left(\la-\la_2\right).
\end{align*}

First assume $\la_1 \neq 1$. In that case $b(z_2)^2$ is a polynomial in $z_2$ of order 4 while $4 a(z_2) c(z_2)$ is of order 3. Hence, taking $z_2$ large enough, avoiding $\frac{\wtil{y}_2}{1-\la_1}$ and $\la-\la_2$ so that $a(z_2)$ and $c(z_2)$ are non-zero, we end up with two non-zero solutions for $z_1$, which are real in case $\BF=\BR$, so that for at least one of the solutions for $z_1$ we will have $\la_1 +\frac{\wtil{y}_1}{z_1} + \frac{\wtil{y}_2}{z_2}\neq 1$.

Now assume $\la_1 =1$. Then $c\left(z_2\right)$ is unchanged and
\begin{align*}
a\left(z_2\right)=-\wtil{y}_2,\quad
b\left(z_2\right)= z_2\left(\wtil{y}_3-\wtil{y}_1-\wtil{y}_2\right)+\wtil{y}_2\left(\la-\la_2\right).
\end{align*}
In this case $b\left(z_2\right)^2-4a\left(z_2\right)c\left(z_2\right)$ is a polynomial of degree at most two with constant term $\wtil{y}_2^2\left(\la-\la_2\right)^2$. If $\la\neq\la_2$, this constant term is positive and taking $z_2\neq 0$ small enough, avoiding $\la-\la_2$ so that again $a\left(z_2\right)$ and $c\left(z_2\right)$ are non-zero, we again arrive at two non-zero solutions for $z_1$ (being real in case $\BF=\BR$) so that for one of them we have $\la_1 +\frac{\wtil{y}_1}{z_1} + \frac{\wtil{y}_2}{z_2}\neq 1$.

Finally, we consider the case where $\la_1=1$ and $\la_2=\la$. This means, since both $\la_1, \la_2 \neq 0$, that $m > r+s+p$ and that $q>p+2$ and $n>r+s+1.$ We show that in this case $x$ and $z$ can be modified in such a way that $y^\circ = \whatA (z\otimes x)$ remains true while also $\la_1 \ne 1$ or $\la_2\ne\la$ holds, so that the constructions of the previous paragraphs can be applied.

By rearranging some of the indices of $L_{r+p+s},\ldots,L_m$ and reordering some of the block rows and columns (see Remark \ref{R:Reorder}) we can arrange to have $y_m=x_nz_q$ as the last equation in $y=E(z\otimes x)$. Since (C1) holds, we know either $z_q$ or $x_n$ occurs only in a single factorisation $y_k=x_{i_k}z_{j_k}$ (hence with $k=m$). Assume $x_n$ occurs only in $y_m=x_nz_q$. Take $a\in\BF$ so that $a\neq 0$ and $a\neq -z_q$. Adjust $z_q$ to $z_q+a$ and redefine $x_{i_k}:=\frac{y_{k}}{z_q+a}$ whenever $j_k=q$. Note that $x_{j_k}$ cannot occur in another factorisation $y_k=x_{i_k}z_{j_k}$ when $j_k=q$, since either $k=m$ (so that $i_k=n$) or since there is more than one $k$ with $j_k=q$ and (C1) is in place. Hence, with these modifications $y=E(z\otimes x)$ still holds while $\la_2$ is adjusted to $\la_2+a \neq \la$. A similar argument allows one to adjust $x$ and $z$ to achieve $\la_1\neq 1$ in case $z_q$ occurs only in the factorisation $y_m=x_nz_q$.
\end{proof}

\begin{lemma}\label{L:C2.7}
Assume condition (C1) in \eqref{C1} together with (C2.7) holds.  Assume also that $\sum_{k=1}^m\al_k = 1$. Then $\fY_{\whatA}=\BF^m$.
\end{lemma}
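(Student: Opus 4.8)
The plan is to imitate the structure of the preceding lemmas: first dispose of the easy sub-cases by invoking the lemmas already proved, then reduce to a single combinatorial normal form for $L$, and finally construct $x,z$ explicitly, exploiting the index singled out by (C2.7). First I would use Remark~\ref{R:Reorder} to relabel $L_1,\dots,L_m$ so that the index provided by (C2.7) is $m$; thus $i_l\neq i_m$ and $j_l\neq j_m$ for all $l<m$, and $m>1$, so $n,q\geq2$. If in addition (C2.1) or (C2.2) holds we are done by Lemma~\ref{L:C2.1&C2.2}; if (C2.3) or (C2.4) holds, by Lemma~\ref{L:C2.3&C2.4}; if (C2.5) or (C2.6) holds, by Lemma~\ref{L:C2.5&C2.6}. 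Hence I may assume none of (C2.1)--(C2.6) holds. Feeding the negations of the seven conditions (as listed in the proof of Lemma~\ref{L:fE2=Fm}) together with (C1) and (C2.7) into a short combinatorial argument of the same flavour as in Lemma~\ref{L:fE2=Fm} should yield the normal form: there is exactly one repeated row index $\rho$, carrying $a\geq2$ of the selected pairs, and exactly one repeated column index $\sigma$, carrying $b\geq2$ of them; the slot $(\rho,\sigma)$ is \emph{not} among the selected pairs (otherwise that pair would violate (C1)); every other selected pair — there are $m-a-b\geq1$ of these, one of which is the pair $m$ — sits in a row and a column used by no other selected pair; and $m=a+n-1=b+q-1$.

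Next I would exploit $\sum_{k=1}^m\al_k=1$ via Lemma~\ref{L:Ipert}. Writing $\whatA=(I_m-\al\OneVec_m^T)E+\al\OneVec_{nq}^T$ as in \eqref{whatAforms1}, one checks that, for a fixed $y^\circ\in\BF^m$, the equation $\whatA(z\otimes x)=y^\circ$ is equivalent to the two conditions $E(z\otimes x)=y^\circ+\mu\al$ for some $\mu\in\BF$ (the parameter $\mu$ running over the kernel direction $\al$) and $\OneVec_{nq}^T(z\otimes x)=\OneVec_m^Ty^\circ$ (this value being forced by the cokernel direction $\OneVec_m$). Thus it suffices to find $\mu\in\BF$, $z\in\BF^q$ and $x\in\BF^n$ with $E(z\otimes x)=y^\circ+\mu\al$ and, using \eqref{OneVecCon}, $(\sum_j z_j)(\sum_i x_i)=\sum_k y^\circ_k$.

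For the construction, put $y=y^\circ+\mu\al$. The entries $x_{i_m}$ and $z_{j_m}$ occur only in the single equation $x_{i_m}z_{j_m}=y_m$, so I would first fix all remaining entries of $x,z$ to realise the other $m-1$ components of $E(z\otimes x)=y$; here the free nonzero scalars $x_\rho$ (present in all $a\geq2$ equations at row $\rho$) and $z_\sigma$ (present in all $b\geq2$ equations at column $\sigma$) give ample room, as do the one-parameter splittings at the isolated pairs. Writing $P=\sum_{i\neq i_m}x_i$, $Q=\sum_{j\neq j_m}z_j$ and $t=\sum_k y^\circ_k$, the product constraint becomes $(P+x_{i_m})(Q+z_{j_m})=t$ with $x_{i_m}z_{j_m}=y_m$. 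If $y_m=0$ one sets $z_{j_m}=0$ (or $x_{i_m}=0$) and the constraint is linear in the surviving variable; if $y_m\neq0$ one substitutes $z_{j_m}=y_m/x_{i_m}$ and is left with $Qx_{i_m}^2+(PQ+y_m-t)x_{i_m}+Py_m=0$, of which a nonzero root is needed. Over $\BC$ this root exists once the trivial degeneracies are removed; over $\BR$ the discriminant is $(PQ-y_m-t)^2-4y_mt$, which is $\geq0$ if $y_mt\leq0$ and is made $\geq0$ when $y_mt>0$ by driving $|PQ|$ to infinity — legitimate because enlarging $x_\rho$ sends $|P|\to\infty$ while $Q$ can first be kept nonzero by a suitable choice of $z_\sigma$ and of the isolated-pair splittings. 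In all cases admissible $x,z$ result, so $y^\circ\in\fY_{\whatA}$ and hence $\fY_{\whatA}=\BF^m$.

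The step I expect to be the main obstacle is, as in Lemmas~\ref{L:C2.3&C2.4} and~\ref{L:Case (iii) inv}, the bookkeeping needed to guarantee that the quadratic (or its linear degeneration) actually possesses a nonzero solution in every sub-case — over $\BR$ this amounts to forcing the discriminant non-negative, and it requires clearing the boundary situations $Q=0$, $y_m=0$ and $t$ small by judiciously spending the remaining freedom in $\mu$, in $x_\rho$, in $z_\sigma$, and in the splittings at the isolated pairs. A secondary, purely combinatorial point that will need care is the derivation of the normal form above from the seven negated conditions together with (C1) and (C2.7).
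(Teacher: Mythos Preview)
Your proposal is correct and follows essentially the same strategy as the paper: after excluding (C2.1)--(C2.6) via the earlier lemmas, derive the normal form, rewrite $\whatA(z\otimes x)=y^\circ$ as $E(z\otimes x)=y^\circ+\mu\al$ together with the forced product constraint $\OneVec_{nq}^T(z\otimes x)=\sum_k y_k^\circ$, and then reduce to a one-variable quadratic whose discriminant is controlled by the remaining free scalars. The only difference is the choice of pivot variables --- the paper takes $z_1,x_{r+1},z_{s+2}$ and solves a quadratic in $z_1$ with coefficients in $z_{s+2}$, whereas you solve for the isolated-pair entries $x_{i_m},z_{j_m}$ and use $x_\rho,z_\sigma$ to push $|PQ|\to\infty$; your discriminant $(PQ-y_m-t)^2-4y_mt$ is tidier, but the residual bookkeeping (nonzero roots, the degenerate cases $Q=0$ and $y_m=0$) is of exactly the same order as in the paper.
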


\begin{proof}[\bf Proof]
Since all other cases are covered, we may assume the conditions (C2.1) to (C2.6) do not hold. By excluding the case $m=1$, reasoning as for (C2.4) again after possibly rearranging the indices of $L_1,\ldots,L_m$ as well as block columns and rows in $L$ (see Remark \ref{R:Reorder}), it follows that there exist $r,s>1$ (with $r+s+1\leq m$) so that $i_k=k$ and $j_k=1$ for $k=1,\ldots,r$, $i_{r+l}=r+1$ and $j_{r+l}=l+1$ for $l=1,\ldots,s$ and $i_{r+s+1}=r+2$, $j_{r+s+1}=s+2$, while $i_p\neq 1,\ldots, r+2$ and $j_p\neq 1,\ldots, s+2$ for $p> r+s+1$. Now fix a $y^\circ\in\BF^m$. Using Lemma \ref{L:Ipert} and \eqref{fE1} it follows that there exist $y\in\BF^m$ and  $\la\in\BF$ so that $y^\circ = \left(I_m - \alpha \OneVec_m^T\right)y + \al \la$ and for this $y$ there exist $z \in \BF^q$ and $x \in \BF^n$ so that $y=E(z \otimes x)$. Then $y_k= x_k z_1$ for $k=1,\ldots,r$, $y_{r+l}= x_{r+1} z_{1+l}$ for $l=1,\ldots,s$ and $y_{r+s+1}=x_{r+2}z_{s+2}$, while $z_1,\ldots,z_{s+2},x_1,\ldots,x_{r+2}$ do not occur in the factorisations $y_p=x_{i_p}z_{j_p}$ for $p>r+s+1$. We claim that we can adjust $z$ and $x$ in such a way that still $y=E(z \otimes x)$ and in addition $\OneVec_{nq}^T(z \otimes x)=\la$. Once this is achieved, we obtain that
\[
y^\circ = \left(I_m - \alpha \OneVec_m^T\right)y + \al \la = \left(I_m - \alpha \OneVec_m^T\right)E(z \otimes x) + \al \OneVec_{nq}^T(z \otimes x)=\whatA (z \otimes x),
\]
and we may conclude that $\fY_{\whatA}=\BF^m$, since $y^\circ$ was chosen arbitrarily.

To see that $z$ and $x$ can be adjusted in the desired way, we will only modify  $z_1,x_{r+1}$ and one of $x_{r+2}$ or $z_{s+2}$, subject to $z_1,x_{r+1},x_{r+2},z_{s+2}\neq 0$, so that $x_k$ for $k=1,\ldots,r$, $z_k$ for $k=2,\ldots,s+1$ and either $x_{r+2}$ or $z_{s+2}$ are fixed by the above equations while the other values of $x_i$ and $z_j$ remain unchanged. This guarantees that $y=E(z \otimes x)$ remains true. Set
\begin{align*}
\wtil{y}_1:=\sum_{k=1}^r y_k,\quad \wtil{y}_2:=\sum_{k=r+1}^{r+s} y_k, \quad
\la_1:=\sum_{i=r+3}^n x_{i},\quad
\la_2:=\sum_{j=s+3}^q z_{j}.
\end{align*}
Then
\begin{align*}
\sum_{i=1}^n x_{i} &= \sum_{i=1}^{r}x_i+ x_{r+1} + x_{r+2} + \sum_{i=r+3}^{n} x_{i}
= \sum_{i=1}^{r} \frac{y_i}{z_1} + x_{r+1}+\frac{y_{r+s+1}}{z_{s+2}}  +\la_1\\
&=\frac{\wtil{y}_1}{z_1}+ x_{r+1} + \frac{y_{r+s+1}}{z_{s+2}} +\la_1,
\end{align*}
and via a similar computation
\[
\sum_{j=1}^q z_{j}=z_1+z_{s+2} +\frac{\wtil{y}_2}{x_{r+1}} +\la_2.
\]
Thus the objective is to find $z_1,x_{r+1},z_{s+2}\neq 0$ so that
\[
\left(\frac{\wtil{y}_1}{z_1}+ \frac{y_{r+s+1}}{z_{s+2}}+ x_{r+1} +\la_1\right)
\left(z_1+z_{s+2} +\frac{\wtil{y}_2}{x_{r+1}} +\la_2\right)=\la.
\] In case one of $\wtil{y}_1,\wtil{y}_2,y_{r+s+1}$ is zero, this ensures that one of the variables only appears in one of the two factors, after which the objective is easy to achieve.  When $\la=0$, $z_1$ and $z_{s+2}$ can be chosen (with $z_1,z_{s+2}\neq 0$) so that the second factor becomes zero, and the identity is satisfied. Thus, assume $\la,\wtil{y}_1,\wtil{y}_2,y_{r+s+1}\neq 0$.

Now we select $x_{r+1}$ so that the first factor becomes equal to 1, i.e., $x_{r+1}=1-\la_1 -\frac{\wtil{y}_1}{z_1} - \frac{y_{r+s+1}}{z_{s+2}}$, and insert this into the second factor so that we arrive at
\[
\la=z_1+z_{s+2} +\frac{\wtil{y}_2}{x_{r+1}} +\la_2= z_1+z_{s+2} +\frac{ \wtil{y}_2 z_1z_{s+2}}{(1-\la_1)z_1z_{s+2} -\wtil{y}_1 z_{s+2} - y_{r+s+1} z_1} +\la_2,
\]
keeping in mind the conditions $z_1,z_{s+2}\neq 0$, $\la_1 +\frac{\wtil{y}_1}{z_1} + \frac{y_{r+s+1}}{z_{s+2}}\neq 1$. Multiplying with $(1-\la_1)z_1z_{s+2} -\wtil{y}_1 z_{s+2} - y_{r+s+1}z_1$ on both sides this leads to a second order polynomial equation in $z_1$ with parameters depending on $z_{s+2}$:
\[
a\left(z_{s+2}\right) z_1^2  + b\left(z_{s+2}\right) z_1  + c\left(z_{s+2}\right)=0
\]
with
\begin{align*}
a\left(z_{s+2}\right)&=z_{s+2}\left(1-\la_1\right)-y_{r+s+1},\quad c\left(z_{s+2}\right)= z_{s+2} \left(\la -\la_2\right)\wtil{y}_1-z_{s+2}^2 \wtil{y}_1,\\
b\left(z_{s+2}\right)&=z_{s+2}^2\left(1-\la_1\right) + z_{s+2}\left(\wtil{y}_2-\wtil{y}_1-y_{r+s+1} -\left(1-\la_1\right)\left(\la-\la_2\right)\right)+\\
&\qquad\qquad\qquad+y_{r+s+1}\left(\la-\la_2\right).
\end{align*}
First assume $\la_1 \neq 1$. In that case $b\left(z_{s+2}\right)^2$ is a polynomial in $z_{s+2}$ of order 4 while $4 a\left(z_{s+2}\right) c\left(z_{s+2}\right)$ is of order 3. Hence, taking $z_{s+2}$ large enough, avoiding $\frac{y_{r+s+1}}{1-\la_1}$ and $\frac{\wtil{y}_{1}(\la-\la_2)}{\wtil{y}_1}$ so that $a\left(z_{s+2}\right)$ and $c\left(z_{s+2}\right)$ are non-zero, we end up with two non-zero solutions for $z_1$, which are real in case $\BF=\BR$, so that at least one of the solutions for $z_1$ will satisfy $\la_1 +\frac{\wtil{y}_1}{z_1} + \frac{y_{r+s+1}}{z_{s+2}}\neq 1$.

Now assume $\la_1 =1$. Then $c\left(z_{s+2}\right)$ is unchanged and
\begin{align*}
a\left(z_{s+2}\right)=-y_{r+s+1},\quad
b\left(z_{s+2}\right)= z_{s+2}\left(\wtil{y}_2-\wtil{y}_1-y_{r+s+1}\right)+y_{r+s+1}\left(\la-\la_2\right).
\end{align*}
In this case $b\left(z_{s+2}\right)^2-4a\left(z_{s+2}\right)c\left(z_{s+2}\right)$ is a polynomial of degree at most two with constant term $y_{r+s+1}^2\left(\la-\la_2\right)^2$. If $\la\neq\la_2$, this constant term is positive and taking $z_{s+2}\neq 0$ small enough, avoiding $\la-\la_2$ so that again $a\left(z_{s+2}\right)$ and $c\left(z_{s+2}\right)$ are non-zero, we again arrive at two non-zero solutions for $z_1$ (being real in case $\BF=\BR$) so that one of them satisfies $\la_1 +\frac{\wtil{y}_1}{z_1} + \frac{y_{r+s+1}}{z_{s+2}}\neq 1$.

The case that remains is when $\la_1=1$ and $\la_2=\la$, which implies that $m > r+s+1$, $q>s+2$ and $n>r+2$, since $\la_1, \la_2 \neq 0$. By a similar argument as used in the last paragraph of the proof of Lemma \ref{L:C2.3&C2.4} the vectors $x$ and $z$ can be modified to obtain $\la_1\neq 1$ or $\la_2\neq \la$, so that the constructions of the previous paragraphs can be applied.
%
%Finally, we consider the case where $\la_1=1$ and $\la_2=\la$. This means since both $\la_1, \la_2 \neq 0$ that $m > r+s+1$ and that $q>s+2$ and $n>r+2.$ Since (C1) holds, we know at least one of $z_q$ or $x_n$ only occurs in the factorisation of a single $y_k$, $ r+s+2 \le k \le m.$  Suppose $z_q$ has the above mentioned property. Then choose $a \in \BF$ such that $a \neq 0$ and $-a \neq x_n.$ Change $x_n$ to $x_n+a$ and redefine $z_l:=\frac{y_k}{x_n+a}$, $l >s+2,$  for all $r+s+2\le k \le m$ where $y_k$ has $x_n$ in its factorisation. Then $\la_1=\sum_{i=r+3}^n x_i + a=1+a \neq 1.$ For the case where $x_n$ only occurs in the factorisation of a single $y_k$, $ r+s+2 \le k \le m,$ choose $a \in \BF$ such that $a \neq 0$ and $-a \neq z_q.$ Change $z_q$ to $z_q+a$ and redefine $x_l:=\frac{y_k}{z_q+a}$, $l >r+2,$  for all $r+s+2\le k \le m$ where $y_k$ has $z_q$ in its factorisation. Then $\la_2=\sum_{j=s+3}^q z_j + a=\la+a \neq \la.$ This proves that we can always ensure that we are in the case where $\la_1 \neq 1$ or if $\la_1=1$ that $\la_2 \neq \la.$
\end{proof}

Lastly we prove for case (iii) in Lemma \ref{L:fE2=Fm} that $\fY_{\whatA}=\BF^m$ when $\sum_{k=1}^m \alpha_k =1$.

\begin{lemma}\label{L:Case (iii) noninv}
Assume $L$ is as in case (iii) of Lemma \ref{L:fE2=Fm} and $\sum_{k=1}^m \alpha_k =1$. Then $\fY_{\whatA}=\BF^m$.
\end{lemma}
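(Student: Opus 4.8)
The plan is to prove directly that $\fY_{\whatA}=\BF^m$ by reproducing the computation behind Lemma \ref{L:Case (iii) inv}, now taking advantage of the hypothesis $\sum_{k=1}^m\al_k=1$. Writing $\wtil{\al}_1=\sum_{k=1}^{q-1}\al_k$ and $\wtil{\al}_2=\sum_{k=q}^m\al_k$, this hypothesis yields $\wtil{\al}_1+\wtil{\al}_2=1$, whence $4\wtil{\al}_1\wtil{\al}_2\le(\wtil{\al}_1+\wtil{\al}_2)^2=1$. In other words, the obstructive regime $4\wtil{\al}_1\wtil{\al}_2>1$ of Lemma \ref{L:Case (iii) inv} cannot occur here, and this elementary observation is the conceptual reason surjectivity holds unconditionally in the present case.

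To set things up I would fix $y\in\BF^m$ and, as in \eqref{tilaly}, put $\wtil{y}=\sum_k y_k$, $\wtil{y}_1=\sum_{k=1}^{q-1}y_k$, $\wtil{y}_2=\sum_{k=q}^m y_k$. Using the form $\whatA=(I_m-\al\OneVec_m^T)E+\al\OneVec_{nq}^T$ from \eqref{whatAforms1} together with $\OneVec_m^T\al=1$, I would introduce a free scalar $s\in\BF$, set $y^\circ:=y+s\al$, and for arbitrary nonzero prescribed entries $x_r,z_s$ invoke Lemma \ref{L:Case (iii) prep} to obtain the unique $x\in\BF^n$, $z\in\BF^q$ with $E(z\otimes x)=y^\circ$. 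A short computation then gives $\whatA(z\otimes x)=y+(\la-\wtil{y})\al$, where $\la:=\OneVec_{nq}^T(z\otimes x)$, so it suffices to arrange $\la=\wtil{y}$. Plugging the factored formula \eqref{factForm} for $\la$ into this, and writing $w:=x_rz_s$, everything collapses (exactly as in the proof of Lemma \ref{L:Case (iii) inv}) to the quadratic $w^2+sw+(\wtil{y}_1+s\wtil{\al}_1)(\wtil{y}_2+s\wtil{\al}_2)=0$; the remaining task is to exhibit $s\in\BF$ and $w\neq0$ satisfying it, after which $x_r,z_s$ are recovered by any factorisation $w=x_rz_s$ with $x_r,z_s\neq0$.

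For $\BF=\BC$ this is immediate: the product of the two $w$-roots is $(\wtil{y}_1+s\wtil{\al}_1)(\wtil{y}_2+s\wtil{\al}_2)$, a polynomial in $s$ that is not identically zero because $\wtil{\al}_1+\wtil{\al}_2=1$ forbids both $\wtil{\al}_i$ from vanishing, so any $s$ making it nonzero produces a nonzero $w$. For $\BF=\BR$ the discriminant of the quadratic equals $(1-4\wtil{\al}_1\wtil{\al}_2)s^2-4(\wtil{y}_1\wtil{\al}_2+\wtil{y}_2\wtil{\al}_1)s-4\wtil{y}_1\wtil{y}_2$, i.e.\ precisely the polynomial $f$ of \eqref{f} with $\rho=s$. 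Since $4\wtil{\al}_1\wtil{\al}_2\le1$ we are in Case 1 or Case 2 of the case analysis already performed in the proof of Lemma \ref{L:Case (iii) inv}, where $f$ is shown to attain a positive value (or to be the zero polynomial), and I would simply cite that analysis. If $f$ takes a positive value at some $s$, then, perturbing $s$ slightly if necessary so that the quadratic's constant term is also nonzero, one gets a nonzero real root $w$; if $f\equiv0$, then for any $s\neq0$ the quadratic reads $w^2+sw=0$ with nonzero root $w=-s$. In every case we obtain $z,x$ with $\whatA(z\otimes x)=y$, so $\fY_{\whatA}=\BF^m$.

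I do not anticipate a genuine obstacle, since the core manipulation is the one already carried out for Lemma \ref{L:Case (iii) inv} and the only extra ingredient is the inequality $4\wtil{\al}_1\wtil{\al}_2\le1$. The points requiring care are purely bookkeeping ones that arise because we are now in the non-invertible regime of $I_m-\al\OneVec_m^T$: namely that the scalar $s$ is genuinely free (it parametrises the kernel direction $\al$), that the value of $\la$ is forced to equal $\wtil{y}$ rather than being free as in Lemma \ref{L:Case (iii) inv}, and that the degenerate subcases of $f$ (a nonzero constant, or identically zero) in the real case must be disposed of by hand.
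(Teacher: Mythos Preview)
Your approach is correct and genuinely different from the paper's. The paper fixes $y$, writes $y^\circ=y'+\rho\al$ with $y'\perp\al$, and then solves \eqref{laId1} through a direct three-case analysis depending on whether $\la=0$ and on a relation between $\wtil{y}'_1$ and $\wtil{y}'_2$. You instead observe that $\wtil{\al}_1+\wtil{\al}_2=1$ forces $4\wtil{\al}_1\wtil{\al}_2\le(\wtil{\al}_1+\wtil{\al}_2)^2=1$, so the quadratic in $w=x_rz_s$ that you derive coincides with \eqref{rhoId1} (with your kernel parameter in the role of $\rho$) and lands exactly in Cases~1--2 of the analysis already done for Lemma~\ref{L:Case (iii) inv}. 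This is a clean reduction and buys you the result essentially for free; the paper's route is more self-contained but repeats work.

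Two minor points to tighten. First, you use $s$ both for the column index in case~(iii) (so that $z_s$ is an entry of $z$) and for your free scalar; rename the scalar, say to $t$ or $\rho$. Second, two of your side claims are not literally correct, though the conclusions survive: the constant term $(\wtil{y}_1+t\wtil{\al}_1)(\wtil{y}_2+t\wtil{\al}_2)$ \emph{can} be identically zero in $t$ even when not both $\wtil{\al}_i$ vanish (e.g.\ $\wtil{\al}_1=0$, $\wtil{y}_1=0$), but then the quadratic is $w^2+tw=0$ with nonzero root $w=-t$; and in the real case with $f\equiv 0$ one has $\wtil{\al}_1=\wtil{\al}_2=\tfrac12$, $\wtil{y}_1=\wtil{y}_2=0$, so the quadratic is $(w+t/2)^2=0$, not $w^2+tw=0$ --- still with nonzero root $w=-t/2$ for $t\neq 0$. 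With these corrections the argument goes through.
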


\begin{proof}[\bf Proof]
Assume $L$ has the form as in case (iii) in Lemma \ref{L:fE2=Fm} and $\sum_{k=1}^m \alpha_k=1$.
Fix a $y\in\BF^m$. Using Lemma \ref{L:Ipert} it follows that there exist $y^\circ\in\BF^m$ and  $\la\in\BF$ so that $y = \left(I_m - \alpha \OneVec_m^T\right)y^\circ + \al \la$. Then by Lemma \ref{L:Case (iii) prep} for all $x_r,z_s\neq 0$ there exist unique vectors $x$ and $z$, with $x_r$ and $z_s$ in the indicated positions of $x$ and $z$, respectively, so that $y^\circ=E(z \otimes x)$ and with $\OneVec_{nq}^T (z \otimes x)$ as in \eqref{factForm}. In the remainder of the proof we show that $x_r$ and $z_s$ can be chosen in such a way that $\OneVec_{nq}^T (z \otimes x)=\la$. Once that is established, it follows that
\[
y = (I_m - \alpha \OneVec_m^T)y^\circ + \al \la = (I_m - \alpha \OneVec_m^T)E(z \otimes x) + \al \OneVec_{nq}^T(z \otimes x)=\whatA (z \otimes x),
\]
and the proof is complete because $y\in\BF^m$ was chosen arbitrarily.

Since $\sum_{k=1}^m\alpha_k =1$, we have $\kr I_m - \alpha \OneVec_m^T=\operatorname{span}\{\alpha\}$, by Lemma \ref{L:Ipert}. This implies that $y^\circ$ can be written as $y^\circ=y'+\rho \al$ with $y'\perp \al$ and $\rho\in\BF$ is another variable we can select arbitrarily. Set
\begin{align*}
\wtil{y}'_1:=\sum_{k=1}^{q-1} y'_k,\quad \wtil{y}'_2:=\sum_{k=q}^{m} y'_k,\quad
 \widetilde{\alpha}_1:=\sum_{k=1}^{q-1} \alpha_k,\quad \widetilde{\alpha}_2:=\sum_{k=q}^m\alpha_k,
\end{align*}
so that
\[
\wtil{y}_1^\circ:=\sum_{k=1}^{q-1} y^\circ_k = \wtil{y}'_1 + \rho \wtil{\al}_1 \ands
\wtil{y}_2^\circ:=\sum_{k=q}^{m} y^\circ_k = \wtil{y}'_2 + \rho \wtil{\al}_2.
\]
Hence, using \eqref{factForm} and the above formulas for $\wtil{y}_1^\circ$ and $\wtil{y}_2^\circ$ the equation to solve becomes
\begin{equation}\label{laId1}
\left(\frac{\wtil{y}'_2+\rho \wtil{\al}_2}{z_s} +x_r\right)
\left(\frac{\wtil{y}'_1+\rho \wtil{\al}_1}{x_r}+z_s\right)=
\la
\end{equation}
with variables $x_r,z_s,\rho\in\BF$ subject to $x_r,z_s\neq 0$. We show this equation can be solved by considering three cases.

\paragraph{\bf Case 1} Assume $\la=0$. Take $z_s=x_r=1$ and $\rho=\frac{-\left(\wtil{y}'_2+1\right)}{\wtil{\al}_2}$.

\paragraph{\bf Case 2} Assume $\wtil{y}'_1 \neq \la+\frac{\wtil{\al}_1 \wtil{y}'_2}{\wtil{\al}_2}$. Take $\rho=\frac{-\wtil{y}'_2}{\wtil{\al}_2}$. Then
\[
\wtil{y}'_2+\rho \wtil{\al}_2=0 \ands \wtil{y}'_1+\rho \wtil{\al}_1= \wtil{y}'_1-\frac{\wtil{\al}_1\wtil{y}'_2}{\wtil{\al}_2}\neq \la.
\]
Thus \eqref{laId1} holds with $x_r=1$ and $z_s=\la-\left(\wtil{y}'_1-\frac{\wtil{\al}_1\wtil{y}'_2}{\wtil{\al}_2}\right)\neq 0$.

\paragraph{\bf Case 3} Assume $\la\neq 0$ and $\wtil{y}'_1 =\la+\frac{\wtil{\al}_1 \wtil{y}'_2}{\wtil{\al}_2}$. Write $\rho$ as $\rho=\frac{\be -\wtil{y}'_2}{\wtil{\al}_2}$ with $\be\in\BF$ arbitrary. In this case \eqref{laId1} turns into
\[
\left(\frac{\be}{z_s} +x_r\right)
\left(\frac{\la +\be \frac{\wtil{\al}_1}{\wtil{\al}_2}}{x_r}+z_s\right)=\la.
\]
Set $z_s=1$ and multiply both sides with $x_r$. This yields the equation
\[
0=\left(\be +x_r\right)\left(\mbox{$\la +\be \frac{\wtil{\al}_1}{\wtil{\al}_2}$}+x_r\right)-\la x_r=x_r^2 + \be\left( 1+\mbox{$\frac{\wtil{\al}_1}{\wtil{\al}_2}$}\right)x_r+\be\left(\mbox{$\la +\be \frac{\wtil{\al}_1}{\wtil{\al}_2}$}\right).
\]

In case $\wtil{\al}_1=\wtil{\al}_2$, take $\be$ so that $\be(\la+\be)\neq 0$ (with $\be(\la+\be)< 0$ if $\BF=\BR$) to obtain a non-zero solution for $x_r$ (which is real in case if $\BF=\BR$).

In case $\wtil{\al}_1\neq \wtil{\al}_2$, for $\BF=\BC$ it is easy to select a $\be$ so that the equation has a non-zero solution, while for $\BF=\BR$ we need to choose $\be$ so that
\[
\be^2\left(1+\mbox{$\frac{\wtil{\al}_1}{\wtil{\al}_2}$}\right)^2 -4 \be\left(\mbox{$\la +\be \frac{\wtil{\al}_1}{\wtil{\al}_2}$}\right)=
\be^2\left(1-\mbox{$\frac{\wtil{\al}_1}{\wtil{\al}_2}$}\right)^2 -4 \be \la >0.
\]
This can easily be done by taking $\be$ large enough.
\end{proof}

\subsection{Proof of Theorem \ref{T:Main}}

In this section we merge the results from the previous sections to prove Theorem \ref{T:Main}. Throughout we assume condition (C1) in \eqref{C1intro} holds. Then by Lemma \ref{L:fE2=Fm} either one of the conditions (C2.1)--(C2.7) hold or $L$ is of the form in (i), (ii) or (iii) listed in the same lemma. Hence, by Corollary \ref{C:fYwhatA=Fm} it suffices to prove that $\fY_{\whatA}=\BF^m$ while (C1) holds together with each of (C2.1)--(C2.7), (i), (ii) and (iii).

For (i) and (ii) this is a consequence of Lemma \ref{L:Case (i) and (ii)}.

Since the matrix $L_0$ is in the span of $L_1,\ldots,L_m$ we can write $L_0=\sum_{k=1}^m \al_k L_k$.

Assuming $\sum_{k=1}^m \al_k \neq 1$, it is shown that $\fY_{\whatA}=\BF^m$ when one of (C2.1)--(C2.7) holds in Lemma \ref{L:SumAl_k=1}, while for $L$ as in (iii) it is shown that $\fY_{\whatA}=\BF^m$ in Lemma \ref{L:Case (iii) inv} for $\BF=\BC$. For case (iii) with $\BF=\BR$, $\fY_{\whatA}=\BF^m$ need not hold, but nonetheless     positivity and complete positivity still coincide by Lemma \ref{L:Case (iii) inv2}.

Assuming $\sum_{k=1}^m \al_k = 1$, it is proved that $\fY_{\whatA}=\BF^m$ when (C2.1) or (C2.2) holds in Lemma \ref{L:C2.1&C2.2}, when (C2.5) or (C2.6) holds in Lemma \ref{L:C2.5&C2.6}, when (C2.3) or (C2.4) holds in Lemma \ref{L:C2.3&C2.4}, when (C2.7) holds in Lemma \ref{L:C2.7} and when $L$ is as in (iii) in Lemma \ref{L:Case (iii) noninv}.

Together, these results prove all possible cases, by Lemma \ref{L:fE2=Fm}, so that we obtain that $\fY_{\whatA}=\BF^m$ holds whenever condition (C1) in \eqref{C1intro} is satisfied, except for the case that is covered in Lemma \ref{L:Case (iii) inv2}.

\subsection{All independents $L_1,\ldots,L_m$ in one row or column}

In Theorem \ref{T:Main} it is required that all matrices other than those selected as $L_1,\ldots,L_m$ are equal to a single matrix $L_0$. Whether this condition can be removed in general remains unclear. However, in the case that $L_1,\ldots,L_m$ are all contained in a single block row or block column, this condition can be removed.

\begin{proposition}\label{P:IndepsRowColumn}
Let $\cL$ as in \eqref{cL} be a $*$-linear matrix map with matricization $L$ and Choi matrix $\BL$. Set $m=\rank \BL$. Assume one can choose linearly independent $L_1,\ldots,L_m\in\BF^{n \times q}$ among the block entries of $L$, say $L_k=L_{i_kj_k}$ for $k=1,\ldots m$, in such a way that $i_k=i_l$ for all $k$ and $l$ or such that $j_k=j_l$ for all $k$ and $l$. In that case $\fY_{\whatA}=\BF^m$ and hence $\cL$ is completely positive in case $\cL$ is positive.
\end{proposition}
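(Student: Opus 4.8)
The plan is to verify the hypothesis of Corollary \ref{C:fYwhatA=Fm}, namely that $\fY_{\whatA}=\BF^m$; complete positivity of $\cL$ whenever $\cL$ is positive then follows at once from that corollary. In the formula for $\whatA$ given in Lemma \ref{form of A_k} the roles of $x\in\BF^n$ (indices $i$, length $n$) and $z\in\BF^q$ (indices $j$, length $q$) are interchangeable, so it suffices to treat the case in which the selected blocks all lie in one block column, say $L_k=L_{i_k j_0}$ for $k=1,\ldots,m$; the case of one block row is handled by the same argument with $x$ and $z$ swapped, $z_{j_k}$ playing the role of $x_{i_k}$ below. Note first that, since $L_1,\ldots,L_m$ are $m$ distinct blocks of $L$ all sitting in block column $j_0$, the row indices $i_1,\ldots,i_m$ are pairwise distinct.

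First I would read off the relevant columns of $\whatA$. Writing $\whatA=E+\sum_{(i,j)\neq(i_k,j_k)}\al^{ij}e_{(j-1)n+i}^{(nq)^T}$ as in Lemma \ref{form of A_k}, with $\al^{ij}=[\al^{ij}_1\ \cdots\ \al^{ij}_m]^T\in\BF^m$, the column of $\whatA$ in position $(j_0-1)n+i$ equals $e_k^{(m)}$ when $i=i_k$ for some $k$ (the perturbation sum omitting the pair $(i_k,j_k)=(i_k,j_0)$), and equals $\al^{ij_0}$ when $i\notin\{i_1,\ldots,i_m\}$. Taking $z=e_{j_0}^{(q)}$, the vector $z\otimes x$ is supported exactly on the positions $(j_0-1)n+i$, $i=1,\ldots,n$, with entries $x_i$ there, so only those columns contribute and
\[
\whatA\bigl(e_{j_0}^{(q)}\otimes x\bigr)=\sum_{k=1}^m x_{i_k}\,e_k^{(m)}+\sum_{i\notin\{i_1,\ldots,i_m\}}x_i\,\al^{ij_0},\qquad x\in\BF^n .
\]
Equivalently, the matrix $\whatA\bigl(e_{j_0}^{(q)}\otimes I_n\bigr)\in\BF^{m\times n}$ has $e_1^{(m)},\ldots,e_m^{(m)}$ among its columns and hence full row rank $m$.

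Given an arbitrary $y\in\BF^m$, I would then choose $x\in\BF^n$ with $x_{i_k}=y_k$ for $k=1,\ldots,m$ and $x_i=0$ for every remaining index $i$; this is a consistent assignment precisely because $i_1,\ldots,i_m$ are distinct, and it kills the second sum above, leaving $\whatA(e_{j_0}^{(q)}\otimes x)=\sum_{k=1}^m y_k e_k^{(m)}=y$. Hence $\fY_{\whatA}=\BF^m$, which by Corollary \ref{C:fYwhatA=Fm} completes the proof. I do not expect a genuine obstacle here: the only points needing care are matching the column indexing of $\whatA$ from Lemma \ref{form of A_k} with the support of $e_{j_0}^{(q)}\otimes x$, and observing that the hypothesis forces the indices $i_k$ (resp.\ $j_k$) to be distinct, which is exactly what makes the required $x$ (resp.\ $z$) available. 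In particular no hypothesis on the remaining block entries of $L$ is needed, since those enter the displayed formula only through the vectors $\al^{ij_0}$ with $i\notin\{i_1,\ldots,i_m\}$, and these are annihilated by the choice $x_i=0$.
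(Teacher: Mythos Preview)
Your proof is correct and follows essentially the same approach as the paper: pick the standard basis vector in the variable corresponding to the common row/column index, so that only the $m$ columns of $\whatA$ at positions $(j_0-1)n+i_k$ (which are the unit vectors $e_k^{(m)}$ by Lemma \ref{form of A_k}) can contribute, and then read off the remaining variable directly from $y$. The paper does the row case ($i_k\equiv r$, $x=e_r$) while you do the column case ($j_k\equiv j_0$, $z=e_{j_0}$); your write-up is a bit more explicit about why the relevant columns of $\whatA$ are standard unit vectors and why the residual $\al^{ij_0}$-columns are harmless, but the idea is the same.
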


\begin{proof}[\bf Proof]
Assume $i_1=\cdots = i_m=r\in{1,\ldots,n}$. Then $j_k\neq j_l$ for $k\neq l$ and $m \leq q$. Moreover, in $\whatA$ the $(j
_k-1)n + r$-th column corresponds to the $k$-th unit vector $e_k$ in $\BF^m$. Let $y\in \BF^m$. Then take $x=e_r$ and $z\in \BF^q$ with $z_{i_k}=y_k$ for $k=1,\ldots,m$ and all other entries of $z$ equal to 0. It then follows that $\whatA(z \otimes x)=y$. Hence $\fY_{\whatA}=\BF^m$, as claimed. A similar argument applies when $j_k=j_l$ for all $k$ and $l$.
\end{proof}

As a special case of the above result one can consider the case where $L$ is an analytic or anti-analytic block Toeplitz matrix.

\begin{example}
Consider the case where the $*$-linear map $\BL$ has an anti-analytic block Toeplitz matrix structure:
\[
L=\mat{L_1&L_2&\cdots&L_{n}\\
0&L_1&\cdots&L_{n-1}\\
\vdots&\ddots&\ddots&\vdots\\
0&\cdots&0&L_1}.
\]
In this case one can take as independent matrices $L_1,\dots,L_n$ (leaving out some if there is a linear dependency) which are all in the first block row, so that Proposition \ref{P:IndepsRowColumn} applies and positivity and complete positivity coincide.
\end{example}

\section{The $2 \times 2$ case}\label{S:2x2}

In this section we consider all possible cases that can occur when $n=q=2.$ Then $m$ can take values $1,2,3$ and $4$.\smallskip

\paragraph{\bf m=1} Here $L$ can only take the form $L=\sbm{L_1&L_1\\L_1&L_1}$ with $L_1=\sbm{\ell_{11}&\ell_{11}\\\ell_{11}&\ell_{11}}$. Clearly (C1) in \eqref{C1} is met as well as conditions (C2.1) and (C2.2) of Lemma \ref{L:fE2=Fm}, and hence positivity and complete positivity of $\cL$ coincide by Lemmas \ref{L:SumAl_k=1} and \ref{L:C2.1&C2.2} and Corollary \ref{C:fYwhatA=Fm}.\smallskip

\paragraph{\bf m=2} In this case, in addition to the selected $L_1$ and $L_2$ there are two entries left, say $L_0$ and $L_0'$, that need not be the same. The four cases for $L$ of the form
\begin{equation}\label{2x2 with m=2}
\begin{bmatrix}L_1 & L_2 \\ L_0 & L_0' \end{bmatrix},\quad
\begin{bmatrix}L_0 & L_0' \\ L_1 & L_2 \end{bmatrix},\quad
\begin{bmatrix}L_1 & L_0 \\ L_2 & L_0' \end{bmatrix},\quad
\begin{bmatrix}L_0 & L_1 \\ L_0' & L_2 \end{bmatrix},
\end{equation}
are all covered by Proposition \ref{P:IndepsRowColumn}, which does not have the restriction that $L_0$ and $L_0'$ should be the same. The remaining cases
\[
\begin{bmatrix}L_1 & L_0 \\ L_0' & L_2 \end{bmatrix} \ands
\begin{bmatrix}L_0 & L_1 \\ L_2 & L_0' \end{bmatrix}
\]
are covered by Theorem \ref{T:Main} in case $L_0=L_0'$. For $L_0\neq L_0'$, since $L_1$ and $L_2$ are linearly independent either $L_0=0$, $L_1$ and $L_0$ are linearly independent or $L_2$ and $L_0$ are linearly independent. In the latter two cases we can make a different selection so that we are in one of the cases in \eqref{2x2 with m=2}. Hence, we only need to consider the case $L_0=0$. By a similar argument we can restrict to $L_0'=0$, but then $L_0=L_0'$ which is covered by Theorem \ref{T:Main}. Hence, in all cases with $m=2$ positivity and complete positivity of $\cL$ coincide.\smallskip

\paragraph{\bf m=3} Here we have four cases for $L$:
\begin{equation*}\label{2x2 with m=3}
\begin{bmatrix}L_1 & L_2 \\ L_3 & L_0 \end{bmatrix},\quad
\begin{bmatrix}L_1 & L_2 \\ L_0 & L_3 \end{bmatrix},\quad
\begin{bmatrix}L_1 & L_0 \\ L_2 & L_3 \end{bmatrix},\quad
\begin{bmatrix}L_0 & L_1 \\ L_2 & L_3 \end{bmatrix},
\end{equation*}
with only a single matrix that is dependent on the other entries. In neither of these cases (C1) holds. Keeping Remark \ref{R:Reorder} in mind, we may reorder block rows and block columns in $L$, so that one only has to deal with one of these four cases. The case where $L_0=0$ is covered in Example \ref{E:2x2upper}, and in that case we obtain that positivity and complete positivity of $\cL$ coincide. However, for the case where $L_0\ne 0$, there exists a case, given by the example below, that proves we can find a positive map $\cL$ that is not completely positive, at least for $\BF=\BR$. Hence in that case, it is not automatically true that positivity and complete positivity coincide. For $\BF=\BC$, however, we do not know at this stage whether positivity and complete positivity coincide for $m=3$.

\begin{example}\label{E:2x2Toeplitz}
Consider the case where we have a $*$-linear map $\cL$ whose matricization $L \in\BF^{4 \times 4}$ is a $2 \times 2$ block Toeplitz operator
\[
L=\mat{L_1&L_2\\ L_3 & L_1}=\mat{a_1&b_1&a_2&b_2\\c_1&a_1&c_2&a_2\\a_3&b_3&a_1&b_1\\c_3&a_3&c_1&a_1}.
\]
Since $\cL$ is $*$-linear, the Choi matrix $\BL$ is selfadjoint, and given by
\[
\BL
=\mat{a_1&a_3&a_2&a_1\\c_1&c_3&c_2&c_1\\b_1&b_3&b_2&b_1\\a_1&a_3&a_2&a_1}
=\mat{a_1&\ov{c}_1&\ov{b}_1&\ov{a}_1\\c_1&c_3&\ov{b}_3&\ov{a}_3\\b_1&b_3&b_2&\ov{a}_2\\a_1&a_3&a_2&a_1}.
\]
Hence, $*$-linearity of $\cL$ is equivalent to the Toeplitz structure in the blocks of $L$ together with $a_1,b_2,c_3\in\BR$, $a_3=\ov{c}_1$, $a_2=\ov{b}_1$ and $c_2=\ov{b}_3$. Hence, $*$-linearity of $\cL$ corresponds to
\[
L= \mat{a_1&b_1&\ov{b}_1&b_2\\c_1&a_1&\ov{b}_3&\ov{b}_1\\\ov{c}_1&\ov{c}_2&a_1&b_1\\c_3&\ov{c}_1&c_1&a_1} \ands
\BL= \mat{a_1&\ov{c}_1&\ov{b}_1&a_1\\c_1&c_3&c_2&c_1\\b_1&\ov{c}_2&b_2&b_1\\a_1&\ov{c}_1&\ov{b}_1&a_1}
\]
with $a_1,b_2,c_3\in\BR$ and all others arbitrarily from $\BF$. In this case
\[
\BH=\mat{a_1&\ov{b}_1&\ov{c}_1\\b_1&b_2&\ov{c}_2\\c_1&c_2&c_3},\qquad
\whatA=\mat{1&0&0&1\\0&0&1&0\\0&1&0&0}
\]
and $\mat{0&1&1}^T\not\in \fY_{\whatA}$, so that $\fY_{\whatA}\neq \BF^3$. Indeed, note that
\[
\whatA (z \otimes x)=\mat{z_1x_1+z_2x_2\\ z_2 x_1\\ z_1 x_2},
\]
so that if $\mat{a&1&1}^T=\whatA (z \otimes x)$, then $z_2,\, x_1$ must be non-zero and of the same sign and $z_1,\, x_2$ must be non-zero and of the same sign, so that $z_1x_1,\, z_2x_2$ will be non-zero and of the same sign, and thus $a=z_1x_1 + z_2x_2$ cannot be 0.

We claim that in this case it can happen that $\cL$ is positive but not completely positive, at least for $\BF=\BR$. Consider the case where $b_1=c_1=c_3=0$, $a_1,b_2>0$ and $c_2=-2 a_1$. Then
\[
\BH=\mat{a_1&0&0\\0&b_2&-2 a_1\\0&-2 a_1&0} \not\geq 0,
\]
so that $\cL$ is not completely positive. Furthermore, we have
\[
\left(I_2 \otimes x\right)^*\BL\left(I_2 \otimes x\right) =\mat{a_1 |x_1|^2 & a_1 x_2 \ov{x}_1-2 a_1 x_1 \ov{x}_2\\ a_1 x_1 \ov{x}_2-2 a_1 x_2 \ov{x}_1& b_2 |x_1|^2  + a_1|x_2|^2}.
\]
For $x_1=0$, this matrix clearly is positive semidefinite. Hence assume $x_1\neq 0$. In that case the Schur complement of the above matrix with respect to the left upper corner is
\begin{align*}
& b_2 |x_1|^2  + a_1|x_2|^2 - \frac{|a_1 x_1 \ov{x}_2-2 a_1 x_2 \ov{x}_1|^2}{a_1 |x_1|^2}
=\\
& \qquad = b_2 |x_1|^2  + a_1|x_2|^2 -  \frac{1}{a_1} \left|a_1 \frac{x_1}{|x_1|} \ov{x}_2-2 a_1 x_2 \frac{\ov{x}_1}{|x_1|}\right|^2.
\end{align*}
For $\BF=\BC$ there are $x\in\BC^2$ where this number is negative, however, for $\BF=\BR$, it is equal to $b_2 |x_1|^2$ and hence always positive. Thus, for $\BF=\BR$, the above $2 \times 2$ block Toeplitz matrix gives another example of a $*$-linear positive map $\cL$ which is not completely positive, while for $\BF=\BC$ the map $\cL$ is not positive.
\end{example}

\paragraph{\bf m=4} This case is covered in Example \ref{E:2x2fullblock}, where it is shown that  positivity of $\cL$ need not imply complete positivity of $\cL.$\smallskip

\paragraph{\bf Acknowledgments}
This work is based on research supported in part by the National Research Foundation of South Africa (NRF) and the DSI-NRF Centre of Excellence in Mathematical and Statistical Sciences (CoE-MaSS). Any opinion, finding and conclusion or recommendation expressed in this material is that of the authors and the NRF and CoE-MaSS do not accept any liability in this regard.


\begin{thebibliography}{10}

%\bibitem{AJP20Arx}
%M. Augat, M.T. Jury, J.E. Pascoe, Effective noncommutative Nevanlinna-Pick interpolation in the row ball, and applications, preprint, arXiv:2005.07556.

%\bibitem{BtH10}
%J.A. Ball and S. ter Horst, Multivariable operator-valued Nevanlinna-Pick interpolation:\ a survey, in:~{\em Operator Algebras, Operator Theory and Applications}, pp.\ 1--72, {\em Oper.\ Theory Adv.\ Appl.} {\bf 195}, Birkh\" auser, Basel, 2010.

\bibitem{B20Arx}
A. Bhardwaj, Practical construction of positive maps which are not completely positive, preiprint, arXiv:2001.01181

%\bibitem{BG15}
%S. Bia\l as, S., and M. G\'{o}ra, On the existence of a common solution to the Lyapunov equations, {\em Bulletin of the Polish Academy of Sciences. Technical Sciences} {\bf 63.1} (2015), 163--168.

\bibitem{C75}
M.-D. Choi, Completely positive linear maps on complex matrices, {\em Linear Algebra Appl.} {\bf 10} (1975),285--290.

%\bibitem{CL09}
%N. Cohen and I. Lewkowicz, The Lyapunov order for real matrices, {\em Linear Algebra Appl.} {\bf  430} (2009), 1849--1866.
%
%\bibitem{CL07}
%N. Cohen and I. Lewkowicz, Convex invertible cones and positive real analytic functions, {\em Linear Algebra Appl.} {\bf 425} (2007), 797--813.

%\bibitem{C66}
%C.G. Cullen, {\em Matrices and linear transformations}, Addison-Wesley Publishing Co., Reading, Mass.-London-Don Mills, Ont., 1966.

\bibitem{H89}
W.W. Hager, Updating the inverse of a matrix, {\em SIAM Rev.} {\bf 31} (1989), 221--239.

\bibitem{H69}
R.D. Hill, Inertia theory for simultaneously triangulable complex matrices, {\em Linear Algebra Appl.} {\bf  2} (1969), 131--142.

\bibitem{H73}
R.D. Hill, Linear transformations which preserve hermitian matrices, {\em Linear Algebra Appl.} {\bf 6} (1973), 257--262.

\bibitem{HJ85}
R.A. Horn and C.R. Johnson, {\em Matrix Analysis,} Cambridge U.P., Cambridge, 1985.

\bibitem{HJ91}
R.A. Horn and C.R. Johnson, {\em Topics in Matrix Analysis,} Cambridge U.P., Cambridge, 1991.

\bibitem{tHN2}
S. ter Horst and A. Naud\'{e}, Hill representations for $*$-linear matrix maps, preprint.

\bibitem{KMcCSZ19}
I. Klep, S. McCullough, K. \u{S}ivic, and A. Zalar, There are many more positive maps than completely positive maps, {\em Int.\ Math.\ Res.\ Not.\ IMRN} 2019, no.\ {\bf 11}, 3313--3375.

%\bibitem{LW97}
%C.-K. Li and H.J. Woerdeman, Special classes of positive and completely positive maps, {\em Linear Algebra Appl.} {\bf 255} (1997), 247--258.

\bibitem{L75}
G. Lindblad, Completely positive maps and entropy inequalities, {\em Comm.\ Math.\ Phys.} {\bf 40} (1975), 147--151.

\bibitem{M12}
W.A. Majewski, On the structure of positive maps: finite-dimensional case, {\em J. Math.\ Phys.} {\bf 53} (2012), no. 2, 023515, 17 pp.

\bibitem{M20}
W.A. Majewski, On the structure of the set of positive maps. Positivity 24 (2020), no. 4, 799–813.

\bibitem{O91}
H. Osaka, Indecomposable positive maps in low-dimensional matrix algebras, {\em Linear Algebra Appl.} {\bf 153} (1991), 73--83.

\bibitem{OH85}
C.J. Oxenrider and R.D. Hill, On the matrix reorderings $\Gamma$ and $\Psi,$ {\em Linear Algebra Appl.} 69 (1985), 205--212.

\bibitem{P02}
V.I. Paulsen, {\em Completely bounded maps and operator algebras}, Cambridge Studies in Advanced Mathematics {\bf 78}, Cambridge University Press, Cambridge, 2002.

\bibitem{P19}
J.E. Pascoe, An elementary method to compute the algebra generated by some given matrices and its dimension, {\em Linear Algebra Appl.} {\bf 571} (2019), 132--142.

\bibitem{P19Arx}
J.E. Pascoe, The outer spectral radius and dynamics of completely positive maps, preprint, arXiv:1905.09895.

\bibitem{PH81}
J.A. Poluikis and R.D. Hill, Completely positive and Hermitian-preserving linear transformations, {\em Linear Algebra Appl.} {\bf 35} (1981), 1--10.

\bibitem{SS05}
A. Shaji and E.C.G. Sudarshan, Who's afraid of not completely positive maps?, {\em Physics Letters A} {\bf 341} (2005), 48--54.

\bibitem{R90}
L.I. Romanov, Surjective bilinear mappings (Russian), {\em  Teor.\ Funktsi\u{\i} Funktsional.\ Anal.\ i Prilozhen.} {\bf  53} (1990), 65--68; translation in {\em J. Soviet Math.} {\bf 58} (1992), 537--539.

\bibitem{S13}
E. St{\o}rmer, {\em Positive linear maps of operator algebras}, Springer Monographs in Mathematics, Springer, Heidelberg, 2013.

\bibitem{TT88}
K. Tanahashi and J. Tomiyama, Indecomposable positive maps in matrix algebras, {\em Canad.\ Math.\ Bull.} {\bf 31} (1988), 308--317.

\bibitem{W16}
H.J.  Woerdeman, {\em Advanced linear algebra}, Textbooks in Mathematics, CRC Press, Boca Raton, FL, 2016.

%\bibitem{Y00}
%S. Yu, Positive maps that are not completely positive, {\em Physical Review A} {\bf 62.2} (2000), 024302.

\bibitem{ZC13}
J.P. Zwolak and D. Chru\'{s}ci\'{n}ski, New tools for investigating positive maps in matrix algebras, {\em Rep.\ Math.\ Phys.} {\bf 71} (2013), 163--175.

\end{thebibliography}
\end{document}